\newcommand{\Cr}{\lambda_r}
\newcommand{\cg}{\theta_g}
\newcommand{\cq}{\vartheta_q}
\newcommand{\Lip}{\text{Lip}}
\newcommand{\one}{\mathds{1}}
\renewcommand{\L}{\mathcal{L}}
\renewcommand{\H}{\mathcal{H}}
\newcommand{\usc}{\text{USC}}
\newcommand{\lsc}{\text{LSC}}
\newcommand{\B}{{\mathcal B}}
\renewcommand{\O}{{\mathcal O}}
\renewcommand{\bar}[1]{{\overline{#1}}}
\newcommand{\F}{{\mathcal F}}
\newcommand{\R}{\mathbb{R}}
\newcommand{\N}{\mathbb{N}}
\newcommand{\eps}{\varepsilon}
\renewcommand{\tilde}[1]{\widetilde{#1}}
\renewcommand{\phi}{\varphi}
\renewcommand{\S}{\mathcal{S}}
\def\XXint#1#2#3{{\setbox0=\hbox{$#1{#2#3}{\int}$ }
\vcenter{\hbox{$#2#3$ }}\kern-.6\wd0}}
\newtheorem{theorem}{Theorem}
\newtheorem{lemma}[theorem]{Lemma}
\newtheorem{proposition}[theorem]{Proposition}
\theoremstyle{definition}
\newenvironment{remark}{\pushQED{\qed}\remarkx}{\popQED\endremarkx} 
\newtheorem{definition}[theorem]{Definition}
\newcommand{\red}[1]{{\color{red}#1}}
\numberwithin{equation}{section}
\numberwithin{theorem}{section}
\begin{document} 

\title[Prediction with history-dependent experts] {Online Prediction with history-dependent experts: The general case}
\author{Nadejda Drenska \and Jeff Calder}
\thanks{{\bf Funding:} Jeff Calder was supported of NSF-DMS grant 1713691.}
\address{School of Mathematics, University of Minnesota}
\email{ndrenska@umn.edu,jcalder@umn.edu}

\maketitle

\begin{abstract}
We study the problem of \emph{prediction of binary sequences} with expert advice in the online setting, which is a classic example of online machine learning. We interpret the binary sequence as the price history of a stock, and view the predictor as an investor, which converts the problem into a \emph{stock prediction problem}. In this framework, an investor, who predicts the daily movements of a stock, and an adversarial market, who controls the stock, play against each other over $N$ turns. The investor combines the predictions of $n\geq 2$ experts in order to make a decision about how much to invest at each turn, and aims to minimize their regret with respect to the best-performing expert at the end of the game. We consider the problem with \emph{history-dependent} experts, in which each expert uses the previous $d$ days of history of the market in making their predictions. We prove that the value function for this game, rescaled appropriately, converges as $N\to \infty$ at a rate of $O(N^{-1/6})$ to the viscosity solution of a nonlinear degenerate elliptic PDE, which can be understood as the Hamilton-Jacobi-Issacs equation for the two-person game. As a result, we are able to deduce asymptotically optimal strategies for the investor.  Our results extend those established by the first author and R.V.~Kohn \cite{drenska2019PDE} for  $n=2$ experts and $d\leq 4$ days of history.
\end{abstract}

\section{Introduction}

Prediction with expert advice refers to a subfield of online machine learning  \cite{CBL}. It models real world situations where an investor uses \emph{expert advice} to predict against (or play against) an adversarial market. In particular, there is a multistep process where new information becomes available at every time step and a learner (or investor) tries to incorporate this data into sequential decisions.  Pioneering works in the machine learning literature for prediction with expert advice are Cover's \cite{cover1966behavior} and Hannan's \cite{Hannan} papers. Various heuristic approaches that achieve good results are contained in \cite{ABG, CBL, GPS, HKW, LW, CFH, Ro}, and recent work has focused on provably optimal strategies \cite{drenska2017pde,drenska2020prediction,drenska2019PDE, ABG, GPS, Bayraktar}.  Typical applications of prediction with expert advice include stock price prediction, portfolio optimization \cite{FS}, self-driving car software \cite{AKT}, and algorithm boosting \cite{FS}.

We consider the problem of \emph{prediction of binary sequences} with expert advice in the online setting. As in \cite{drenska2019PDE}, we call the problem a \emph{stock prediction problem}, since we think of the predictor as an investor, and the binary sequence as the price history of a stock. We measure how effective the investor's strategy is through the notion of \emph{regret}, which is the difference between the investor's performance and the performance of an expert. \emph{Prediction} refers to the process by which the investor combines the advice of multiple experts to make their own investment decision. The investor's goal is to minimizing regret with respect to the best performing expert, and thus obtain provably good performance. An underlying assumption is that each expert may have a varying degree of predictive ability. Indeed, some experts may be poor predictors, some may be adversarial, and some may have inside information and perform above average often.  The central question becomes how to distinguish between the different experts and take advantage of the best performing ones. In this paper, we take the commonly used assumption that the market is \emph{adversarial}, and is thus another player in the game whose goal is to maximize the investor's regret. In other words, we are undertaking a worst case analysis.

We are interested in the case of \emph{history-dependent experts}, in which each expert uses the previous $d$ days of market history to make their predictions.  The case with two static experts---one optimistic (who always bids $+1$) and one pessimistic (who always bids $-1$)---was first introduced by Thomas Cover in 1966  \cite{cover1966behavior}.  Recent work  has considered PDE scaling limits in the static case \cite{KP, AP,  Zhu}, and the first author and R.V.~Kohn \cite{drenska2019PDE} recently extended these results by allowing the two experts' behaviors to be history-dependent. This extension introduces a second time scale, so the system becomes `fast-slow', with a `fast' variable living on a discrete graph that describes the market history. In order to handle this complication, \cite{drenska2019PDE} used ideas from graph theory and was able to completely solve the problem for $n=2$ experts and $d\leq 4$ days of market history, and establish upper and lower bounds for the value function for $n=2$ and $d\geq 5$.

In this paper, we extend the results of \cite{drenska2019PDE} to any number of experts $n\geq 2$ and any number of days $d\geq 1$ of market history. In particular, we prove that the value function for the discrete prediction problem converges, with quantitative rates, to the viscosity solution of a nonlinear degenerate elliptic PDE. The PDE is the same as the one in \cite{drenska2019PDE} for $n=2$ experts. We then use the solution of the PDE to construct a provably asymptotically optimal strategy for the investor. A key feature of our work is that the prediction problem is played over a graph, which encodes the ways in which the $d$ days of market history can transition at each step of the game. The graph is the $d$-dimensional de Bruijn graph over $2$ symbols (see Figure \ref{fig:graph}). The value function for the two-person game varies rapidly over the graph, introducing a `fast' variable, and in order to understand the long-time behavior of the game, we have to understand how the fast variable averages out in the long run. Our proof utilizes a $k$-step dynamic programming principle, instead of the usual $1$-step version. For $k$ sufficiently large, the `fast' variable averages out over the graph. It is possible to view our proof through the lens of homogenization theory. Indeed, the \emph{local problem} we identify in Section \ref{sec:cell} is essentially a cell problem, and describes the local oscillations of the value function. Our approach is completely different from the one used in \cite{drenska2019PDE}, which works with two linear programs related to movement on the de Bruijn graph. In particular, the convergence rates that we obtain are worse by a cube root from those established in \cite{drenska2019PDE} for $n=2$ and $d\leq 4$. We refer to Section \ref{sec:overview} for a more thorough comparison of our work with \cite{drenska2019PDE}.

There are many other cases in the PDE literature where scaling limits of sequential decision making result in elliptic or parabolic PDEs. Examples include the Kohn-Serfaty two-person game for curvature motion \cite{KS1}, which can be extended to more general equations \cite{KS2}, and the stochastic tug-of-war games for the $p$-Laplacian and $\infty$-Laplacian \cite{PS2,PS1}. These works have been followed by many others (see e.g. \cite{AS1, NS, APSS, LM,calder2020convex}). In particular, our work is somewhat related to \cite{calder2020convex}, in which the second author and C.K.~Smart prove that convex hull peeling has a continuum limit that corresponds to affine invariant curvature motion. Convex hull peeling has an interpretation as a two-person game played on a random point cloud. In \cite{calder2020convex}, the authors also use a multistep approach, where a large number of steps in the dynamic programming principle are required to ensure the value function averages out locally.

This paper is organized as follows. In Section \ref{sec:setup} we describe the setup for prediction with history dependent experts, and in Section \ref{sec:main} we state our main results.  In Section \ref{sec:overview} we give an overview of the main ideas behind our proofs, and how they relate to the previous work by the first author and R.V.~Kohn \cite{drenska2019PDE}. In Section \ref{sec:game} we study the discrete value function and establish basic properties, including the $k$-step dynamic programming principle. In Section \ref{sec:cell}, we study what we call the \emph{local problem}, which arises from Taylor expansion in the $k$-step dynamic programming principle, and show that the local problem converges as $k\to \infty$ at the rate $O\left( \frac{1}{k} \right)$. In Section \ref{sec:pde}, we study the continuum PDE, proving existence of a unique linear growth viscosity solution under mild assumptions, and establishing regularity in some special cases. Finally, in Section \ref{sec:proofs} we give the proofs of our main results.

\subsection{Setup}
\label{sec:setup}

We follow the setup in \cite{drenska2019PDE}. Assume we have $n\geq 2$ experts making predictions about the movement of a particular stock. The change in stock price on a daily basis is described by a stream of binary data $b_1,b_2,b_3,\dots,b_i,\dots$ with $b_i\in \B:=\{-1,1\}$, representing whether the stock increased or decreased on day $i$. Every day, each of the $n$ experts makes a prediction about whether the stock will increase or decrease tomorrow. The investor uses these predictions to make an investment, and this yields a corresponding gain or loss, depending on the movement of the market $b_i$. The game is played for a fixed number of days $N$, and the performance of the player is compared against the best performing expert.

We assume the $n$ experts each use a fixed publicly available algorithm to make their predictions, and the predictions depend on the previous $d$ days of history of stock movement. That is, on day $i$, the experts use the data 
\begin{equation}\label{eq:mk}
m^i:=(b_{i-d},b_{i-d+1},\dots,b_{i-1}) \in \B^d
\end{equation}
to make a prediction about $b_i$. The $n$ expert predictions are taken to be fixed functions
\begin{equation}\label{eq:experts}
q_1,\dots,q_n:\B^d\to [-1,1],
\end{equation}
where $q_j(m)$ represents the prediction of expert $j$ given stock history $m\in \B^d$. The predictions are real numbers in the interval $[-1,1]$, indicating the confidence each expert has in their prediction. For notational convenience we write $q:=(q_1,\dots,q_n):\B^d \to [-1,1]^n$ for the vector of all expert predictions. We assume the predictions $q(m)$ are publicly known for all $m\in \B^d$.  Given the expert predictions $q(m^i)$ of $b_i$, the investor decides on an investment $f_i\in [-1,1]$, which can be interpreted as an amount of the stock to buy or sell. The market then chooses $b_i\in \B$. If $b_i=1$, then the investor gains $f_i$, while if $b_i=-1$ then the investor loses $f_i$. Thus, the investor gains $b_if_i$ on day $i$. Similarly, the $j^{\rm th}$ expert, were they to invest their prediction, would gain $b_iq_j(m^i)$. 

The investor's performance is measured by their \emph{regret} against each expert. The regret relative to an expert is the difference between the gains of the expert and that of the investor. We denote by $x_i\in \R$ the regret of the investor with respect to expert $i$, and write $x=(x_1,\dots,x_n)\in \R^n$ for the vector of regrets with respect to all experts. The change in regret with respect to expert $j$ on day $i$ is thus $b_i(q_j(m^i)-f_i)$. In the context of prediction, one would say we are using the financial loss function
\[L(f_i,b_i) := b_i(q_j(m^i) - f_i).\]
For more general prediction problems, other losses for measuring how well the investor predicts $b_i$ could be used (e.g., $L(f_i,b_i)=|f_i-b_i|$). We expect the results and techniques used in this paper to apply to other losses as well, with some modifications. It is also important to point out that we do not index the regret by the day $i$. In this framework, the regret is a \emph{state variable}, and the change in regret is realized as moving the game to a new state. 

After the game is played for $N$ days, the investor's regret is evaluated with a \emph{payoff function} $g:\R^n\to \R$. A common choice is $g(x)=\max\{x_1,\dots,x_n\}$, which simply reports the regret compared to the best performing expert. While the maximum regret is most commonly used in practice, our anlaysis works for more general payoffs, satisfying reasonable conditions, so we proceed in generality. The goal of the investor is to minimize $g(x)$, where $x$ is the regret vector at the end of the game. The market is assumed to be adversarial, and is selecting the stock movements $b_i$ so as to maximize $g(x)$.  Thus, we are undertaking a \emph{worst case analysis} in this paper.

Underlying the two-player game is a directed graph that encodes the ways in which the history $m^i$ can change from day to day. At each step $i$ of the game, there are only two possible states for the history window at step $i+1$, depending on whether $b_i=1$ or $b_i=-1$. In order to describe this graph we introduce some notation. For $m=(m_1,\dots,m_d)\in \B^d$ and $b\in \B$ we define $m|b\in \B^d$ by
\begin{equation}\label{eq:concat1}
m|b:=(m_2,m_3,\dots,m_d,b).
\end{equation}
In this notation, the history window $m^i$ evolves according to $m^{i+1}=m^i|b_i$. We also write $m_+=m|1$ and $m_-=m|-1$. Each node in the graph is a possible state $m\in \B^d$ of the game's history, and there is a directed edge from $m$ to $m_+$ and from $m$ to $m_-$ for every node $m$.  This graph is called the $d$-dimensional de Bruijn graph over $2$ symbols. Figure \ref{fig:graph} shows the $3$-dimensional de Bruijn graph, where we have written $0$ in place of $-1$ to simplify the figure.  The presence of this underlying de Bruijn graph creates additional challenges in describing the optimal strategies and optimal value for the game.
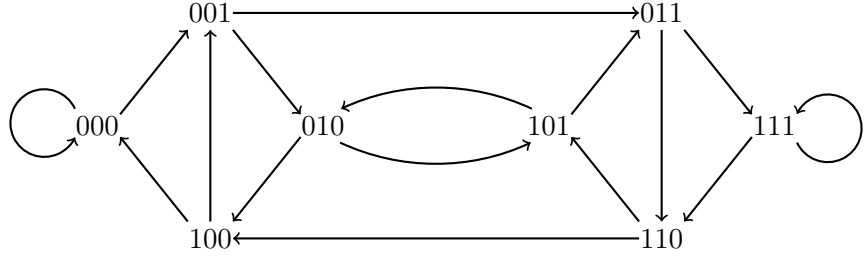
\begin{figure}
\begin{center}
\begin{tikzpicture}[scale=1.5]
\node at (-1,0) {010};
\node at (1,0) {101};
\node at (-3,0) {000};
\node at (3,0) {111};
\node at (-2,-1) {100};
\node at (-2,1) {001};
\node at (2,-1) {110};
\node at (2,1) {011};
\draw[thick,->] (-3.2,0.15) arc (25:335:0.3);
\draw[thick,->] (3.2,-0.15) arc (-180+25:180-25:0.3);
\draw[thick,->] (-2.8,0.1)--(-2.2,0.85);
\draw[thick,<-] (-2,0.85)--(-2,-0.85);
\draw[thick,->] (-1.8,0.85)--(-1.2,0.1);
\draw[thick,<-] (-1.8,-0.85)--(-1.2,-0.1);
\draw[thick,<-] (-2.8,-0.1)--(-2.2,-0.85);
\draw[thick,<-] (2.8,0.1)--(2.2,0.85);
\draw[thick,->] (2.8,-0.1)--(2.2,-0.85);
\draw[thick,->] (2,0.85)--(2,-0.85);
\draw[thick,<-] (1.8,0.85)--(1.2,0.1);
\draw[thick,->] (1.8,-0.85)--(1.2,-0.1);
\draw[thick,->] (-1.8,1)--(1.8,1);
\draw[thick,<-] (-1.8,-1)--(1.8,-1);
\draw[thick,->] (0.85,0.15) arc (90-25:90+25:2);
\draw[thick,->] (-0.85,-0.15) arc (270-25:270+25:2);
\end{tikzpicture}
\end{center}
\caption{The de Bruijn Graph, $d=3$}
\label{fig:graph}
\end{figure}

The discussion above was largely informal. To be precise, we now define the \emph{value function}.
\begin{definition}[Value function]\label{def:value}
Let $g:\R^n\to \R$. Given $N\in \N$, $m\in \B^d$, and $1 \leq \ell \leq N$, the \emph{value function} $V_N(x,\ell;m)$ is defined by $V_N(x,\ell;m)=g(x)$ for $\ell=N$, and  
\begin{equation}\label{eq:valuedef}
V_N(x,\ell;m) = \min_{|f_{\ell}|\leq 1}\max_{b_{\ell}=\pm 1}\min_{|f_{\ell+1}|\leq 1}\max_{b_{\ell+1}=\pm 1}\cdots \min_{|f_{N-1}|\leq 1}\max_{b_{N-1}=\pm 1}g\left( x + \sum_{i=\ell}^{N-1} b_i(q(m^i) - f_i\one) \right)
\end{equation}
for $1 \leq \ell \leq N-1$, where $m^\ell = m$ and $m^{i+1}=m^i|b_i$ for $i=\ell,\dots,N-1$.
\end{definition}
Here, we use the notation $\one$ for the all ones vector $\one=(1,1,\dots,1)\in \R^n$. The value of $V_N(x,\ell;m)$ is the payoff on the final day $N$, given the game starts on day $\ell$ with regret $x\in \R^n$ and history $m\in \B^d$, and both the investor and market play optimally.  Notice there are, in fact, $2^d$ value functions, one for each $m\in \B^d$. 

\subsection{Main results}
\label{sec:main}

We are interested in understanding the long-time behavior of the value functions as $N\to \infty$, and the asymptotically optimal investor strategies. For this, we place the following structural assumptions on the payoff. 
\begin{flalign}\label{eq:strict_inc}\tag{G1} 
\text{There exists } \cg>0 \text{ such that for all } x\in \R^n, v\in [0,\infty)^n,\, g(x+v) \geq g(x) + \cg \langle v,\one\rangle,&&
\end{flalign}
\begin{flalign}\label{eq:homogeneous}\tag{G2} 
\text{For all } x\in \R^n, s>0 \text{ we have } g(sx) = sg(x).&&
\end{flalign}
We also place the following assumption on expert strategies.
\begin{flalign}\label{eq:redund}\tag{E1} 
\text{For all }m\in \B^d,\, q(m)\neq \one \text{ and } q(m)\neq -\one.&&
\end{flalign}
Assumption \eqref{eq:redund} asks that the experts never all agree at $+1$ or $-1$.  For example, if one expert always predicts $+1$ while another always predicts $-1$, then \eqref{eq:redund} holds. This assumption guarantees that the constant $\cq$ defined by
\begin{equation}\label{eq:thetaq_def}
\cq := \min_{m\in \B^d}\min\left\{\sum_{i=1}^n (1-q_i(m)_+),\sum_{i=1}^n (1-q_i(m)_-) \right\}
\end{equation}
is strictly positive $\cq>0$, where $a_+=\max\{a,0\}$ and $a_-=-\min\{a,0\}$. 

To obtain a meaningful continuum limit, we must rescale $V_N$ appropriately. We define the rescaled value function $u_N:\R^n\times [0,1]\times \B^d\to \R$ by
\begin{equation}\label{eq:uN}
u_N(x,t;m) := \frac{1}{\sqrt{N}}V_N(\sqrt{N}x,\lceil Nt\rceil;m).
\end{equation}
Here, $\lceil t\rceil$ denotes the smallest integer greater than $t$. The rescaling in \eqref{eq:uN} is \emph{parabolic} rescaling, and is due to the adversarial nature of the problem, which causes $O(\sqrt{N})$ regret to accumulate after $N$ steps of the game. We also define the upper and lower value functions $u_N^+$ and $u_N^-$ by
\begin{equation}\label{eq:uNupper}
u^{+}_N(x,t) = \max_{m\in \B^d}u_N(x,t;m) \ \ \text{ and } \ \ u^{-}_N(x,t) = \min_{m\in \B^d}u_N(x,t;m).
\end{equation}

Our main results, given below, show that $u_N^\pm$ converge uniformly, with convergence rates, to the solution of the continuum PDE
\begin{equation}\label{eq:PDE}
\left\{\begin{aligned}
u_t + \frac{1}{2^{d+1}}\sum_{\eta\in Q(\nabla u)} \langle \nabla^2 u\,\eta,\eta\rangle &= 0,&&\text{in }\R^n\times (0,1)\\ 
u &=g,&&\text{on }\R^n\times \{t=1\},
\end{aligned}\right.
\end{equation}
where for $p\in \R^n$ the set $Q(p)$ is given by 
\begin{equation}\label{eq:Qdef}
Q(p)=\left\{q(m) - \frac{\langle p, q(m)\rangle}{\langle p, \one\rangle}\one \, : \, m\in \B^d\right\}
\end{equation}
when $\langle p, \one\rangle\neq 0$, and $Q(p)=\varnothing$ otherwise. Essentially, \eqref{eq:PDE} is the limiting Hamilton-Jacobi-Isaacs equation for the two player game.  Since $Q(p)\subset p^\perp$, \eqref{eq:PDE} is a degenerate diffusion equation. 

Our first result is the following continuum limit.
\begin{theorem}\label{thm:main1}
Let $n\geq 2$. Let $g$ be uniformly continuous, and assume \eqref{eq:strict_inc}, \eqref{eq:homogeneous} and \eqref{eq:redund} hold. Let $u\in C(\R^n\times [0,1])$ be the unique viscosity solution of \eqref{eq:PDE}.  As $N\to \infty$ we have
\[u^\pm_N \longrightarrow u \text{ uniformly on }\R^n\times [0,1].\]
Furthermore, if $g\in C^4(\R^n)$ with $[g]_{C^4(\R^n)}<\infty$, then there exists $C_1,C_2>0$ depending on $n$, $\cg$, and $[g]_{C^4(\R^n)}$, such that for all $t\in [0,1]$ and
\begin{equation}\label{eq:Ncond}
N \geq \max\left\{ \frac{(d+1)^6}{d^2},\frac{C_1d}{\cq^{3}} \right\}
\end{equation}
it holds that
\[\sup_{x\in \R^n}|u^\pm_N(x,t) - u(x,t)|\leq C_2 \left((1-t)d^{2/3}N^{-1/6}  + d^{1/3}N^{-1/3}\right).\]
\end{theorem}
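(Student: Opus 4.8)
The plan is to run a Barles--Souganidis/Krylov-type argument: realize $u_N^\pm$ (through the limiting solution $u$) as approximate sub- and supersolutions of a monotone scheme whose consistency error is controlled \emph{quantitatively}, and then iterate. The one nonstandard ingredient is that the history $m\in\B^d$ is a ``fast'' variable that must be averaged out, and this is done through the $k$-step dynamic programming principle of Section~\ref{sec:game} and the local problem of Section~\ref{sec:cell}. First I would reduce to $g\in C^4(\R^n)$: a general uniformly continuous $g$ is approximated in $\|\cdot\|_\infty$ by $g_j\in C^4$ satisfying (up to a vanishing error) \eqref{eq:strict_inc}--\eqref{eq:homogeneous} and \eqref{eq:redund}; since $|u_N^{g}-u_N^{g_j}|\le\|g-g_j\|_\infty$ directly from \eqref{eq:valuedef}, and viscosity solutions of \eqref{eq:PDE} are stable under uniform convergence of the data (Section~\ref{sec:pde}), the qualitative limit for $g$ follows from the case of each $g_j$. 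So from now on assume $g\in C^4$, let $u$ solve \eqref{eq:PDE}, and use the regularity of $u$ from Section~\ref{sec:pde} (in particular $\nabla u\ge\cg\one$, by \eqref{eq:strict_inc} and comparison); where such regularity is unavailable I would instead work with a sup-/inf-convolution of $u$ at an auxiliary scale to be optimized along with $k$.

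Next comes the consistency estimate, which is the heart of the matter. Fix a starting time $t$, a regret $x$, a history $m$, and a block length $k$ with $d\lesssim k$ and $k^2\le N$. Using the $k$-step DPP I would insert $u(\cdot,t)$ in place of $u_N$ and expand: over one block the regret moves by $O(k/\sqrt N)$, so one Taylor-expands $u$ to fourth order in $x$ and second order in $t$. At step $j$ the investor plays the drift-cancelling bid $f_j^\ast=\langle\nabla u(x_j),q(m^j)\rangle/\langle\nabla u(x_j),\one\rangle$ corrected by $\tfrac{1}{\sqrt N}\tau(m^j)$, where $\tau$ is read off from the corrector $w\colon\B^d\to\R$ solving the local problem of Section~\ref{sec:cell}. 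Assumption \eqref{eq:redund} (through $\cq>0$, as in \eqref{eq:thetaq_def}) together with $\nabla u\ge\cg\one$ gives $\langle\nabla u,\one-q(m^j)\rangle\ge\cg\cq$, and similarly for $\one+q$, so $|f_j^\ast|\le 1-\cg\cq/\langle\nabla u,\one\rangle$ and the corrected bid is admissible as soon as $\|w\|_\infty/\sqrt N$ fits inside this slack --- the source of the requirement $N\ge C_1 d\,\cq^{-3}$ in \eqref{eq:Ncond}. With this choice the first-order terms cancel, the second-order terms telescope against $w$, and --- using that the de Bruijn averaging operator $w\mapsto\tfrac12\big(w(\cdot|1)+w(\cdot|{-1})\big)$ has the constants as kernel and the mean-zero functions as range --- summing the local problem over $m\in\B^d$ produces exactly $\tfrac{1}{2^{d+1}}\sum_{\eta\in Q(\nabla u)}\langle\nabla^2 u\,\eta,\eta\rangle$, i.e.\ the operator in \eqref{eq:PDE}. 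Collecting the remainders --- the $O(1/k)$ rate at which the local problem approaches this operator, the fourth-order spatial Taylor error, and the error from freezing $\nabla u,\nabla^2u,w$ at $x$ while the path wanders $O(k/\sqrt N)$ --- one obtains a two-sided consistency estimate: $u$ is an approximate super- and sub-solution of the $k$-step scheme with per-block error $\tfrac kN E$, where $E\le C\big(d\,k^{-1}+k^2N^{-1/2}+kN^{-1}\big)$ and $C$ depends on $n,\cg,[g]_{C^4}$, uniformly in $x,m,t$.

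The $k$-step operator is monotone and commutes with the addition of constants, so the consistency inequality can be iterated from $t$ up to within one block of $t=1$ (about $N(1-t)/k$ iterations), the leftover block of at most $k$ steps being bounded crudely by $C\,k/\sqrt N$ since $u_N$ and $u$ coincide at $t=1$ and the regret moves only $O(k/\sqrt N)$ there. This gives
\[
\sup_{x\in\R^n}|u_N^\pm(x,t)-u(x,t)|\ \le\ C(1-t)\,E\ +\ C\,k/\sqrt N .
\]
Choosing $k\asymp d^{1/3}N^{1/6}$ to balance $d\,k^{-1}$ against $k^2N^{-1/2}$ in $E$ yields the leading term $C(1-t)d^{2/3}N^{-1/6}$ and the residual $C\,d^{1/3}N^{-1/3}$; requiring this $k$ to exceed the $\Theta(d)$ scale on which walks on the de Bruijn graph equidistribute (needed for the $O(1/k)$ homogenization rate) forces $N\gtrsim d^4$, essentially the bound $N\ge(d+1)^6/d^2$. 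Together with the first paragraph this establishes both assertions of the theorem.

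The main obstacle is the consistency estimate above, and within it two coupled issues. First, keeping the corrected strategy in $[-1,1]$: this is precisely where \eqref{eq:redund}/$\cq>0$, the positivity $\nabla u\ge\cg\one$ from \eqref{eq:strict_inc}, and the lower bound on $N$ all enter, and extracting the sharp powers of $\cq$ and $d$ in \eqref{eq:Ncond} requires care. Second, controlling the mismatch between the frozen-coefficient local problem (whose corrector we actually play) and the true $k$-step evolution with $x$-dependent coefficients along a wandering path; it is this $O(k^2N^{-1/2})$ contribution, together with the homogenization rate $O(1/k)$, that pins the exponent $\tfrac16$. A subordinate difficulty is supplying enough regularity for $u$ to serve as a legitimate test function uniformly in $N$ --- this leans on the PDE analysis of Section~\ref{sec:pde}, with a sup-/inf-convolution regularization of $u$ (and a third parameter to optimize) as a fallback.
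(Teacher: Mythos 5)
Your proposal is essentially correct and follows the same route as the paper: reduce to $g\in C^4$ by mollification, use the regularity theory of Section~\ref{sec:pde} to control up to four derivatives of $u$, plug $u$ into the $k$-step DPP, invoke the local-problem estimate of Section~\ref{sec:cell} with its $O(d/k)$ homogenization rate, iterate in $t$, and balance $d/k$ against $k^2 N^{-1/2}$ to get $k\sim d^{1/3}N^{1/6}$. The error budget you wrote down, $E \lesssim dk^{-1}+k^2N^{-1/2}+kN^{-1}$, matches the terms in Lemma~\ref{lem:scheme} (you dropped the subdominant $c_{x,2}^2\cg^{-1}k\eps$ term, which doesn't change the optimization), and you correctly locate the origin of both conditions in \eqref{eq:Ncond}: the admissibility constraint $c_{x,2}k\eps\lesssim\cq\cg$ gives $N\gtrsim d/\cq^3$ at the optimal $k$, and $k\geq d+1$ gives $N\gtrsim (d+1)^6/d^2$.

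The one place your exposition departs from the paper's mechanism: you package the local problem as a corrector equation $w:\B^d\to\R$ solved via the Fredholm alternative for the de Bruijn averaging operator $w\mapsto\tfrac12(w(\cdot|1)+w(\cdot|{-1}))$, whereas the paper instead builds the recursive quantity $\H_k(X,p,m)$ (Eq.~\eqref{eq:Hm}) and shows directly, by induction through the $k$-step min-max via Lemma~\ref{lem:opt}, that $\L_{k,\eps}\approx\H_k$ and that $\H_k/k$ converges at rate $O(d/k)$ to the averaged operator (Proposition~\ref{prop:S}). These are the same averaging-out phenomenon seen from two sides; your corrector view is cleaner conceptually but the paper's inductive bound is what delivers the quantitative $\|X\|^2\gamma_p^{-1}(k+d)k\eps$ error directly. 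Also, the sup-/inf-convolution of $u$ you keep as a fallback is not needed: for $g\in C^4$, Theorem~\ref{thm:reg1} already gives $u\in C^{4,2}$ with uniform bounds depending only on $\cg$ and $[g]_{C^4}$, and for merely uniformly continuous $g$ the paper mollifies $g$ (not $u$) and uses stability of both the discrete value function (Lipschitz dependence on $g$) and the viscosity solution (comparison principle) in $\|g-g_\delta\|_\infty$.
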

We show in Section \ref{sec:pde} that when $g$ is uniformly continuous and \eqref{eq:strict_inc} holds, \eqref{eq:PDE} has a unique linear growth viscosity solution. We also recall the $C^k(\R^n)$ semi-norm of $u$ is defined as
\begin{equation}\label{eq:cksemi}
[g]_{C^k(\R^n)} = \sup_{x\in \R^n}\max_{1\leq |\alpha|=k}|D^\alpha g(x)|.
\end{equation}

It is also common in the literature on online learning to assume the payoff satisfies the following translation property:
\begin{flalign}\label{eq:translation}\tag{G3} 
\text{For all }x\in\R^n \text{ and }s\in \R, \ \  g(x+s\,\one) = g(x) + s.&&
\end{flalign}
When the translation property holds, the rate in Theorem \ref{thm:main1} can be extended to Lipschitz continuous payoffs $g$. This includes the commonly used payoff $g(x) = \max\{x_1,\dots,x_n\}$, which corresponds to measuring regret with respect to the best performing expert. For this, we need to place an additional assumption on the expert strategies.  We define $r:\B^d \to \R^{n-1}$ by
\begin{equation}\label{eq:rdef}
r(m)= (q_1(m)-q_n(m),\dots,q_{n-1}(m)-q_n(m)),
\end{equation}
and we assume
\begin{flalign}\label{eq:span}\tag{E2} 
\text{There exists } 0 < \Cr \leq 1 \text{ such that } \frac{1}{2^{d+1}} \sum_{m\in \B^d} r(m)\otimes r(m) \geq \Cr I, &&
\end{flalign}
where $I$ is the $(n-1)\times (n-1)$ identity matrix. We recall that for symmetric matrices $A$ and $B$, the notation $A\geq B$ means that $A-B$ is positive semi-definite. 

In this case, we have the following result.
\begin{theorem}\label{thm:main2}
Let $n\geq 2$. Let $g$ be Lipschitz continuous, and assume \eqref{eq:strict_inc}, \eqref{eq:homogeneous}, \eqref{eq:translation}, \eqref{eq:redund} and \eqref{eq:span} hold. Let $u\in C(\R^n\times [0,1])$ be the unique viscosity solution of \eqref{eq:PDE}.  Then there exists $C_1,C_2>0$ depending only on $n$, such that for all $t\in [0,1]$ and
\begin{equation}\label{eq:Ncond2}
N \geq \frac{C_1(d+1)^6}{d^2\Cr}
\end{equation}
it holds that
\[\|u^\pm_N - u\|_{L^\infty(\R^n\times [0,1])}\leq C_2\Lip(g)\left( 1 + \frac{\Lip(g)^2}{\cg^2\cq^2}+\log\left(1+d^{-1/3}\Cr^{-1/6}N^{5/6}\right) \right) \Cr^{-2/3}d^{2/3}N^{-1/6}.\]
\end{theorem}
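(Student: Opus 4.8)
The plan is to deduce the Lipschitz case from the $C^4$ case, Theorem~\ref{thm:main1}, by using the parabolic smoothing of the Hamilton--Jacobi--Isaacs equation \eqref{eq:PDE} that becomes available precisely when the extra hypotheses \eqref{eq:translation} and \eqref{eq:span} are imposed. First I would record the structural consequences of \eqref{eq:translation}: as shown in Section~\ref{sec:pde} it forces $u(x+s\one,t)=u(x,t)+s$, hence $\langle\nabla u,\one\rangle\equiv 1$ (so $Q(\nabla u)$ is never empty) and $\nabla^2u\,\one\equiv 0$; thus on each slice $\{\langle x,\one\rangle=c\}$ equation \eqref{eq:PDE} is a quasilinear equation in $n-1$ variables whose diffusion matrix, after replacing each $\eta\in Q(\nabla u)$ by its projection $\Pi\eta=\Pi q(m)$ onto $\one^\perp$, is bounded below by $c(n)\,\Cr\,\Pi$ thanks to \eqref{eq:span} and \eqref{eq:rdef}. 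Uniform parabolicity transverse to $\one$ then gives interior-in-time estimates
\[
[u(\cdot,t)]_{C^k(\R^n)}\ \leq\ C(n)\,\Lip(g)\,\big(\Cr\,(1-t)\big)^{-\beta_k},\qquad 0\le t<1,\ k=2,3,4,
\]
with exponents $\beta_k$ of order $(k-1)/2$; establishing this for the fully nonlinear, $\one$-degenerate PDE \eqref{eq:PDE}, with the displayed $\Cr$- and $(1-t)$-dependence, is the first main step, and I expect it to be the principal obstacle. It is carried out (in the cases needed) in Section~\ref{sec:pde}.

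Second I would record two soft bounds, uniform in $N$: (i) $u$ and every $u_N^\pm$ are $\Lip(g)$-Lipschitz in $x$, from the min--max formula \eqref{eq:valuedef}, the comparison principle, and Lipschitzness of $g$; and (ii) $|u_N^\pm(x,t)-g(x)|+|u(x,t)-g(x)|\le C\,\Lip(g)\sqrt{1-t}$, which on the discrete side follows from the parabolic scaling \eqref{eq:uN} (a game of length $\lceil N(1-t)\rceil$ accrues $O(\sqrt{N(1-t)})$ regret under optimal play) and on the continuum side from the analogous estimate for \eqref{eq:PDE}.

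With these in hand I would fix $\delta\in(0,1)$ and split $[0,1]$ at $t=1-\delta$. On $[1-\delta,1]$ bound (ii) immediately gives $\|u_N^\pm-u\|\le C\,\Lip(g)\sqrt\delta$. On $[0,1-\delta]$ I would rerun the proof of Theorem~\ref{thm:main1} with $u$ itself as the barrier in the $k$-step dynamic programming principle of Section~\ref{sec:game}: Taylor-expanding $u$, using the local-problem convergence of Section~\ref{sec:cell} (rate $O(1/k)$) to identify the discrete quadratic form with the diffusion term of \eqref{eq:PDE}, and using \eqref{eq:strict_inc} in the comparison step, the only change being that the constant $[g]_{C^4}$ is everywhere replaced by the time-dependent quantity $[u(\cdot,t)]_{C^4}\lesssim\Lip(g)(\Cr(1-t))^{-\beta_4}$ above, and the discrete terminal data at step $\lceil N(1-\delta)\rceil$ is matched to $u(\cdot,1-\delta)$ using bound (ii) once more. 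Since homogeneity \eqref{eq:homogeneous} is not preserved by the flow, one does not re-invoke Theorem~\ref{thm:main1} verbatim but reuses its proof, noting that its \emph{quantitative} part consumes only \eqref{eq:strict_inc}, the $k$-step DPP, the cell-problem rate, and $C^4$-control of the barrier; homogeneity is needed only for qualitative well-posedness, which is here supplied by (i)--(ii) (and, if convenient, by working directly with the reduced uniformly parabolic problem on $\R^{n-1}$).

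The asserted bound then comes out of an optimization. Over $[0,1-\delta]$ the accumulated consistency error is controlled by a time integral of a density of the form $\tfrac{c\,d^{\gamma}}{k}+[u(\cdot,t)]_{C^4}\tfrac{k^{a}}{N^{b}}$ (first term from the graph-averaging, second from the Taylor remainder); optimizing the block length $k(t)$ at each time subject to $k\gtrsim(d+1)^6/d^2$ --- which is exactly what forces the hypothesis \eqref{eq:Ncond2} --- and then integrating the optimized density against the degenerating regularity $[u(\cdot,t)]_{C^4}\lesssim\Lip(g)(\Cr(1-t))^{-\beta_4}$ over the dyadic scales $1-t\in[2^{-j-1},2^{-j}]$ up to the cutoff $j\sim\log(1+d^{-1/3}\Cr^{-1/6}N^{5/6})$ produces a main term of order $\Lip(g)\,\Cr^{-2/3}d^{2/3}N^{-1/6}$, the logarithm, and a lower-order term $C\,\Lip(g)^3\cg^{-2}\cq^{-2}\Cr^{-2/3}d^{2/3}N^{-1/6}$ in which $\cg$ and the constant $\cq>0$ guaranteed by \eqref{eq:redund} through \eqref{eq:thetaq_def} enter via the nondegeneracy of the investor's drift cancellation in the local problem. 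Choosing $\delta$ to balance the $\sqrt\delta$ boundary-layer error against these terms collapses everything to the estimate of Theorem~\ref{thm:main2}. Apart from the regularity estimate, the delicate points are the precise tracking of the $\Cr$-, $d$- and logarithmic dependence through this optimization, and checking that the barrier $u$ (equivalently the reduced problem) inherits enough of \eqref{eq:strict_inc}--\eqref{eq:span} for every ingredient of the Theorem~\ref{thm:main1} machinery to apply, for which the reduction via \eqref{eq:translation} is the clean device.
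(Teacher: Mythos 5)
Your proposal follows essentially the same route as the paper's proof: use the representation-formula regularity of Theorem~\ref{thm:reg2} (where the translation property \eqref{eq:translation} reduces \eqref{eq:PDE} to a \emph{linear} uniformly parabolic heat equation in $n-1$ variables, with ellipticity constant $\Cr$ from \eqref{eq:span}, giving the degenerating $C^4$ bounds $\lesssim \Lip(g)/((1-t)\Cr)$), a boundary-layer estimate near $t=1$, and then propagation of the error via Lemma~\ref{lem:scheme} using the time-dependent derivative constants, followed by an optimization in $k$. A few points of the write-up are imprecise but harmless. You advertise optimizing a time-varying $k(t)$ and a dyadic decomposition; the paper uses a single fixed $k=\lceil d^{1/3}\Cr^{1/6}N^{1/6}\rceil$ throughout and controls the resulting divergent sums $\sum_i (M+j+ik)^{-\alpha}$ directly, which is the same bookkeeping in different clothing. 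Your claim that homogeneity \eqref{eq:homogeneous} is needed only for qualitative well-posedness misses that it is used \emph{quantitatively} in the terminal-layer estimate $u_N(x,1-j\eps^2;m)\leq g(x)+C\Lip(g)j\eps$ via $\eps g(\eps^{-1}x)=g(x)$, but since \eqref{eq:homogeneous} is assumed in the theorem this is not a gap. Your soft bound (ii) asserts $|u_N^\pm(x,t)-g(x)|\lesssim\Lip(g)\sqrt{1-t}$ on the discrete side; the paper only proves the weaker $\lesssim \Lip(g)(1-t)N^{1/2}$, but both suffice since the estimate is only used on a boundary layer of width $2M\eps^2$ where they coincide. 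Finally, \eqref{eq:Ncond2} arises simply from the requirement $k\geq d+1$ with the optimized $k\sim d^{1/3}\Cr^{1/6}N^{1/6}$, not from a lower bound of the form $k\gtrsim (d+1)^6/d^2$, but your phrasing still arrives at the correct condition on $N$. Overall this is the paper's argument.
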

We recall the Lipschitz constant of $g$ is given by
\[\Lip(g) = \sup_{\substack{x,y\in \R^n \\ x\neq y}}\frac{|g(x)-g(y)|}{|x-y|}.\]

Several remarks are in order.
\begin{remark}
In the proofs of Theorems \ref{thm:main1} and \ref{thm:main2}, we end up obtaining asymptotically optimal strategies for the investor and market. We show that the investor's optimal strategy is the one that achieves indifference to the market's choice $b_i$, while the market's optimal strategy is to choose $b_i$ to penalize any deviation from the investor's optimal strategy. The proofs in our paper do not explicitly use these strategies; instead, our proofs are concerned with the optimal \emph{value}, given optimal strategies are employed. For reference, we describe an asymptotically optimal investor strategy below, which is a byproduct of the proof of Lemma \ref{lem:cell} in Section \ref{sec:cell}.

Let the initial regret on day $1$ be denoted $x^1\in \R^n$, and the initial history window be denoted $m^1\in \B^d$. Let 
\[x^j = x^1 + \sum_{i=1}^{j-1} b_i(q(m^i) - f_i\one)\]
be the regret on day $j$, where $m^{i+1}=m^i|b_i$. Let $1 \ll k \ll N$ such that $k$ divides evenly into $N$, and consider dividing the number of plays of the game $N$ into blocks of size $k$. We describe the strategy on the $\ell^{\rm th}$ block $f_{\ell k+1},f_{\ell k+2},\dots,f_{(\ell+1)k}$. We compute the solution of \eqref{eq:PDE} and set
\[p = \nabla u(x^{\ell k}) \ \ \text{ and }\ \ X = \nabla^2 u(x^{\ell k}).\]
  We define $\H_i:\B^d\to \R$ by $\H_0(m)=0$ for all $m\in \B^d$ and the recursion
\[\H_{i}(m) = \frac{1}{2}\langle X\xi(m),\xi(m)\rangle + \frac{1}{2}\left( \H_{i-1}(m_+) + \H_{i-1}(m_-) \right),\]
for $i\geq 1$, where
\[\xi(m) = q(m) - \frac{\langle p, q(m)\rangle}{\langle p, \one\rangle}\one.\]
See Proposition \ref{prop:S} for more properties of $\H_i$. Then for $i=1,\dots,k$ the investor chooses the strategy
\begin{multline}\label{eq:opt_strategy}
f_{\ell k +i} = h_{\ell k +i}^{-1}\Bigg( \langle p,q(m^{\ell k+i})\rangle + \eps\sum_{j=\ell k +1}^{\ell k+i-1}b_j\langle Xq(m^{\ell k+i}),q(m^{j})-\one f_{j}\rangle\\
+ \frac{\eps}{2}(\H_{k-i}(m^{i}_+) - \H_{k-i}(m^{i}_-))\Bigg),
\end{multline}
where
\[h_{\ell k +i}:=\langle p,\one\rangle + \eps\sum_{j=\ell k + 1}^{\ell k+i-1}b_j\langle X\one,q(m^j) - \one f_j\rangle.\]
where we have set $\eps = N^{-1/2}$ for convenience. 
This investor strategy makes the market indifferent (in an asymptotic sense) to $b_i=\pm 1$. The proof of this is contained in Lemma \ref{lem:cell}.  The amount of accumulated regret after following this investor strategy for all $k$ steps of the $\ell^{\rm th}$ block is approximately $\H_k(m^1)$. This turns out to correspond to a weighted average of $\tfrac12 \langle X\xi(m),\xi(m)\rangle$ over a de Bruijn tree of depth $k$ rooted at $m^1$, and as $k\to \infty$ this tree averages out over the de Bruijn graph, yielding (see Proposition \ref{prop:S})
\[\frac{1}{k}\H_k(m^1) \sim \frac{1}{2^{d+1}}\sum_{m\in \B^d} \langle X\xi(m),\xi(m)\rangle.\]
Notice this is the same operator appearing in our main PDE \eqref{eq:PDE}. Any choice of $1 \ll k \ll N^{1/2}$ yields an asymptotically optimal strategy. In the proof of our main results, we optimize over the choice of $k$, yielding $k\sim d^{1/3}N^{1/6}$.

Let us remark that the strategy \eqref{eq:opt_strategy} on the first step of a new block ($i=1$) is given by
\begin{equation}\label{eq:candidate}
f_{\ell k +1} = \frac{\langle p,q(m^{\ell k+1})\rangle}{\langle p,\one\rangle}  + \frac{\eps}{2}\left(\frac{\H_{k-1}(m^{1}_+) - \H_{k-1}(m^{1}_-)}{\langle p,\one\rangle}\right).
\end{equation}
As we show in Proposition \ref{prop:S}, the term
\begin{equation}\label{eq:candidate2}
\H_{k-1}(m^{1}_+) - \H_{k-1}(m^{1}_-)
\end{equation}
is independent of $k$, as long as $k\geq d+1$. This term is exactly the difference of weighted sums over de Bruijn trees of depth $k-1$ rooted at $m^1_+$ and $m^1_-$ (we refer to Proposition \ref{prop:S} for more details).  In a followup paper \cite{calder2020asymp}, we show that a strategy of this form is also asymptotically optimal for the investor, but with shaper $O(\eps)$ convergence rates.


We also mention that, unlike in \cite{drenska2019PDE}, the asymptotically optimal investor strategy we identified in \eqref{eq:opt_strategy} is not given by an explicit formula, since it involves the partial derivatives of the value function $u$, which is characterized as the unique solution of the nonlinear parabolic PDE \eqref{eq:PDE}. When $n=2$ and $g(x)=\max_i x_i$, it was shown in \cite{drenska2019PDE} that this PDE can be solved analytically, giving explicit formulas for the optimal strategies in this case. For $n\geq 3$, even when $g(x)=\max_i x_i$, we are not able to solve the equation in closed form. However, in  Theorem \ref{thm:reg2}, we show that whenever $g$ satisfies the translation property \eqref{eq:translation}, the PDE \eqref{eq:PDE} admits a representation formula for the solution in terms of a convolution and a linear change of coordinates. While this respresentation formula is not explicit, it may be possible to numerically approximate the convolution, even in high dimensions, with Monte-Carlo methods. We leave this to future work. 
\label{rem:optimal}
\end{remark}

\begin{remark}
We briefly remark on the roles of the hypotheses \eqref{eq:strict_inc}, \eqref{eq:homogeneous}, and \eqref{eq:redund}. First, \eqref{eq:homogeneous} is only used to ensure the final time condition $u_N(x,1;m) = g(x)$ holds. If instead of defining $u_N$ as in \eqref{eq:uN}, we use the alternative rescaled definition
\[u_N(x,t;m) = \min_{|f_{\lceil Nt\rceil}|\leq 1}\max_{b_{\lceil Nt\rceil}=\pm 1}\cdots \min_{|f_{N-1}|\leq 1}\max_{b_{N-1}=\pm 1}g\left( x + N^{-1/2}\sum_{i=\ell}^{N-1} b_i(q(m^i) - f_i\one) \right),\]
then we can omit the hypothesis \eqref{eq:homogeneous}.  If \eqref{eq:homogeneous} does not hold, and we define $u_N$ as in \eqref{eq:uN}, then we expect a result similar to Theorem \ref{thm:main1} to hold, provided we replace $g$ in \eqref{eq:PDE} with 
\[g_0(x):=\lim_{\eps\to 0}\eps g\left( \frac{x}{\eps} \right),\]
provided the limit exists. To obtain the same convergence rate as in Theorem \ref{thm:main1}, we would have to assume a rate of convergence as $\eps\to 0$ in the definition of $g_0$ above.

Second, while the conditions \eqref{eq:redund} and \eqref{eq:strict_inc} appear in the convergence rate in Theorem \ref{thm:main1} through the constants $\cq$ and $\cg$, it appears these conditions are necessary even for the convergence $u_N^\pm \to u$ without a rate. To see why, we show in Proposition \ref{prop:PDEprop} (ii) that \eqref{eq:strict_inc} implies that $u_{x_i}\geq \cg >0$ for all $i$. Combining this with  \eqref{eq:redund} we see that
\[-1 < \frac{\langle \nabla u ,q(m)\rangle}{\langle \nabla u,\one\rangle} < 1\]
holds for all $m\in \B^d$.  Thus, when $N$ is sufficiently large, so that $\eps = N^{-1/2}$ is sufficiently small, the optimal investor strategy $f_i$ given in \eqref{eq:opt_strategy} (note $p=\nabla u$) is guaranteed to be admissible; that is, it lies in the interval $f_i\in [-1,1]$. If there are nodes $m\in \B^d$ in the de Bruijn graph where $q(m)=\one$ or $q(m)=-\one$, then the optimal strategy \eqref{eq:opt_strategy} may sometimes be inadmissible for the investor. In this case, the investor will be unable to render the market indifferent to $b_i=1$ or $b_i=-1$, and as a result, the market can exploit the investor and accumulate additional regret. The condition \eqref{eq:redund} is not needed if we allow the investor more flexibility in their investment, and invest $f_i \in [-1-\delta,1+\delta]$ for some $\delta>0$. 

We note that we still expect to see some kind of continuum limit result even when \eqref{eq:redund} does not hold, however, the limiting PDE \eqref{eq:PDE} may have a different form. In particular, instead of an equal weighting over all nodes in the de Bruijn graph, we expect that nodes with $q(m)=\one$ or $q(m)=-\one$ may be more heavily weighted, indicating that these nodes contribute a higher amount of regret. We also mention that \eqref{eq:strict_inc} is used to show that the PDE \eqref{eq:PDE} has a unique viscosity solution, although the weaker condition $\langle \nabla g,\one\rangle \geq \cg>0$ is sufficient for this purpose. 
\label{rem:hypotheses}
\end{remark}

\begin{remark}
Notice in Theorem \ref{thm:main2}, the constants $C_1$ and $C_2$ depend only on the number of experts $n$. In particular, the dependence on the dimension $d$ of the de Bruijn graph is recorded explicitly and is \emph{sublinear} (i.e., $d^{2/3}$) in the convergence rate, while polynomial in the condition \eqref{eq:Ncond2} on $N$. A similar comment is true for Theorem \ref{thm:main1}, though the constants in that theorem depend additionally on regularity properties of $g$.
\label{rem:constants}
\end{remark}

\begin{remark}
\label{rem:classical}
It is not common in the literature on scaling limits for two-player games to obtain convergence rates as in Theorems \ref{thm:main1} and \ref{thm:main2}, due to a lack of regularity for the viscosity solution of the limiting equation \eqref{eq:PDE}. In this case, the PDE \eqref{eq:PDE} has a hidden geometric structure that allows us to prove that the viscosity solution $u$ is classical, in certain cases, with sufficient control on its derivatives to obtain the convergence rates. In particular, the PDE \eqref{eq:PDE} is a geometric equation that describes the evolution of the level sets of $u$ by a heat equation. In the right coordinate system, the heat equation is \emph{linear} and \eqref{eq:span} is exactly the corresponding uniform ellipticity condition. This was first observed for $n=2$ experts in the work of Zhu \cite{Zhu}, and this observation also plays an essential role in \cite{drenska2019PDE}. We refer to Theorems \ref{thm:reg2} and \ref{thm:reg1} for the general statements (for any $n\geq 2$) of this geometric structure.

In fact, when the translation property \eqref{eq:translation} holds, it is straightforward to see where the additional regularity comes from. Indeed, \eqref{eq:translation} implies that $u$ also satisfies the translation property (see Proposition \ref{prop:PDEprop} (iii)) and so, formally speaking, $\langle \nabla u,\one\rangle = 1$. Differentiating again we obtain $\nabla^2 u\one =0$. Therefore, the equation \eqref{eq:PDE} simplifies to the linear heat equation 
\begin{equation}\label{eq:PDElinear}
\left\{\begin{aligned}
u_t + \frac{1}{2^{d+1}}\sum_{m\in \B^d} \langle \nabla^2 u\,q(m),q(m)\rangle &= 0,&&\text{in }\R^n\times (0,1)\\ 
u &=g,&&\text{on }\R^n\times \{t=1\}.
\end{aligned}\right.
\end{equation}
If $\sum_{m\in \B^d}q(m)\otimes q(m) \geq \lambda I$, then \eqref{eq:PDElinear} is uniformly elliptic and $u\in C^\infty(\R^n\times [0,1))$. We note that the uniform ellipticity condition \eqref{eq:span} is for a different equation (see Theorem \ref{thm:reg2} and Remark \ref{rem:otherspan}) that is obtained by using the translation property to reduce the dimension to $n-1$. The condition \eqref{eq:span} is implied by uniform ellipticity of \eqref{eq:PDElinear}, and is hence a weaker condition. We also note that \eqref{eq:span} implies that the vectors $\{r(m)\}_{m\in \B^d}$  span $\R^{n-1}$, and so a necessary condition for \eqref{eq:span} to hold is that $2^d \geq n-1$.
\end{remark}
 
\subsection{Overview and relation to prior work}
\label{sec:overview}

We give here a high level overview of the ideas behind the proofs of Theorems \ref{thm:main1} and \ref{thm:main2}, and compare to the previous work of the first author and R.V.~Kohn \cite{drenska2019PDE}.

We show in Proposition \ref{prop:dppuN} that the rescaled value function $u_N$ satisfies the dynamic programming principle
\[u_N(x,t;m) = \min_{|f|\leq 1}\max_{b=\pm 1}u_N(x +   \eps b(q(m) - \one f),t + \eps^2;m|b),\]
where we write $\eps=N^{-\frac12}$ for convenience. The standard way to extract a limiting PDE from a dynamic programming principle is to replace $u_N(x,t;m)$ by a smooth function $u(x,t)$, \emph{independent} of $m$, and Taylor expand the function $u$. Neglecting error terms, this yields
\[u(x,t) = \min_{|f|\leq 1}\max_{b=\pm 1}\left\{u(x,t) + \eps^2 u_t(x,t) + b\eps \langle\nabla u(x,t),\delta\rangle + \frac{\eps^2}{2}\left\langle \nabla^2u(x,t)\delta,\delta\right\rangle \right\},\]
where $\delta = q(m) - \one f$.  To simplify the discussion, let us assume the translation property \eqref{eq:translation} holds. As in Remark \ref{rem:classical}, this implies that the solution $u$ of \eqref{eq:PDE}, or any candidate for the limit of $u_N$, satisfies $\nabla^2 u\one =0$. This simplifies the dynamic programming principle to read
\[u = \min_{|f|\leq 1}\max_{b=\pm 1}\left\{u + \eps^2 u_t + b\eps \langle\nabla u,q(m)-\one f\rangle + \frac{\eps^2}{2}\left\langle \nabla^2u\,q(m),q(m)\right\rangle \right\},\]
where we have dropped the dependence on $(x,t)$. We can rearrange this to find that
\begin{equation}\label{eq:dpptaylor}
u_t + \min_{|f|\leq 1}\max_{b=\pm 1}\left\{b\eps^{-1} \langle\nabla u,q(m) - \one f\rangle + \frac{1}{2}\left\langle \nabla^2u\, q(m),q(m)\right\rangle \right\} = 0.
\end{equation}
From here, we see that the ``optimal'' choice for the market is $b = \text{sign}(\langle \nabla u,q(m)-\one f\rangle)$ and the ``optimal'' investor strategy is
\begin{equation}\label{eq:naive_optimal}
f = \frac{\langle \nabla u,q(m)\rangle}{\langle \nabla u,\one\rangle}.
\end{equation}
Indeed, this strategy is admissible, i.e., $f\in [-1,1]$, since $q(m)\in [-1,1]^n$ and \eqref{eq:strict_inc} implies $u_{x_i}>0$ for all $i$. In fact, \eqref{eq:naive_optimal} is exactly a weighted average of the expert strategies, weighted by the partial derivatives $u_{x_i}$. This choice sets sets the first term to be zero in the min-max in \eqref{eq:dpptaylor}, which yields
\begin{equation}\label{eq:mPDE}
u_t + \frac{1}{2}\langle \nabla^2 u \, q(m),q(m)\rangle = 0.
\end{equation}
However, this PDE depends on the state $m\in \B^d$ on the de Bruijn graph, and we expect this dependence to drop out as $N\to \infty$. In fact, note that the PDE \eqref{eq:PDElinear} is exactly the average of \eqref{eq:mPDE} over $\B^d$.  This indicates that the investor strategy \eqref{eq:naive_optimal} is not, in fact, optimal. 

To see why \eqref{eq:naive_optimal} is suboptimal, we note that \eqref{eq:mPDE} implies that this investor strategy accumulates regret of $\frac{1}{2}\langle \nabla^2 u \, q(m),q(m)\rangle$ in each step of the game, \emph{independent} of the choice made by the market. Furthermore, by setting the first term in \eqref{eq:dpptaylor} to zero, this strategy gives the market \emph{complete} control over the trajectory of the game on the de Bruijn graph. The market will choose the binary stream $b_1,b_2,\dots,$ so as to traverse cycles on the de Bruijn graph that are most costly, that is, where $\frac{1}{2}\langle \nabla^2 u \, q(m),q(m)\rangle$ is largest. Thus, unless all de Bruijn cycles have the same average cost, the investor has some incentive to slightly modify \eqref{eq:naive_optimal} to counteract the market and limit this behavior. In essence, we were not justified in dropping the state $m$ from the one step dynamic programming principle, and the optimal strategies must take into account more than one step of the game.

In \cite{drenska2019PDE}, the first author and R.V.~Kohn took the \emph{ansatz} that the optimal investor strategy has the form
\begin{equation}\label{eq:ansatz}
f_i = \frac{\langle \nabla u,q(m)\rangle}{\langle \nabla u,\one\rangle} + \eps f^\#_i,
\end{equation}
and looked for correctors $f^\#_i$ that slightly modified \eqref{eq:mPDE} so that all cycles on the de Bruijn graph were equally expensive.  Choosing an $O(\eps)$ perturbation allows $f_i^\#$ to interact directly with the second order $O(\eps^2)$ terms in the Taylor expansion above. The authors of \cite{drenska2019PDE}  showed that the correctors $f^\#_i$ should be chosen as the solution to a particular linear program over the de Bruijn graph with inequality constraints. There are linear programs for both the investor and the market, leading to upper and lower bounds for the value function for $n=2$ and all $d\geq 1$.  When the values of the two linear programs (for the market and investor) coincide, the upper and lower bounds coincide, the strategies are provably optimal, and the authors establish convergence of the value functions. Currently, it is only known that the values coincide for $n=2$ and $d\leq 4$, and this is obtained though explicitly solving the linear programs and checking. The linear programs become exponentially more complicated as $d$ grows, and finding explicit solutions is a challenging open problem for $d\geq 5$. We expect that the investor strategy we identified in \eqref{eq:opt_strategy} is closely related to this linear program, and may provide clues for solving it explicitly for $d \geq 5$.

In this paper, we take an entirely different approach, and in the end, we essentially show that the \emph{ansatz} \eqref{eq:ansatz} is correct for all $n\geq 2$ and $d\geq 1$. We say \emph{essentially} because our optimal strategy (see the discussion in Remark \ref{rem:optimal} and Eq.~\eqref{eq:opt_strategy}) has the form
\[f_i =\frac{\langle \nabla u,q(m)\rangle}{\langle \nabla u,\one\rangle} + O(k\eps),\]
where $k\to \infty$ as $\eps\to 0$. While $k$ can increase to infinity arbitrarily slowly, the optimal value (for the best convergence rate) is $k\sim \eps^{-1/3}$. We compare this with the ansatz \eqref{eq:ansatz}, which implicitly assumes $f_i^\#$ is bounded, independent of $\eps$. It is an open problem to determine if the ansatz \eqref{eq:ansatz} is correct in general, with the sharp $O(\eps)$ perturbation.

Our approach follows more closely to the classical viscosity solutions approach to optimal control. Instead of looking for optimal market and investor strategies and using these to prove convergence of the value function, we focus our attention directly on the value function itself, and use ideas from homogenization theory to show how the value function locally averages out over the de Bruijn graph.  To briefly summarize our approach, instead of taking one step in the dynamic programming principle, we take a large number of steps $k$. This results in the $k$-step dynamic programming principle (proved in Proposition \ref{prop:dppuN})
\[u_N(x,t;m) = \min_{|f_1|\leq 1}\max_{b_1=\pm 1}\cdots \min_{|f_{k}|\leq 1}\max_{b_{k}=\pm 1} u_N\bigg(x +  \eps\sum_{i=1}^{k} b_i(q(m^i) - \one f_i),t + \eps^2k;m^{k+1}\bigg),\]
where $\eps=N^{-1/2}$. We proceed in the same way as above, and replace $u_N$ by a smooth function $u$ and Taylor expand to obtain
\begin{align*}
u(x,t) &= \min_{|f_1|\leq 1}\max_{b_1=\pm 1}\cdots \min_{|f_{k}|\leq 1}\max_{b_{k}=\pm 1}\Big\{u(x,t) + k\eps^2 u_t(x,t) + \eps\sum_{i=1}^kb_i \langle\nabla u(x,t),\delta_i\rangle\\
&\hspace{3.5in} + \frac{\eps^2}{2}\sum_{i,j=1}^kb_ib_j\left\langle \nabla^2u(x,t)\delta_i,\delta_j\right\rangle \Big\},
\end{align*}
where  $m^1=m$ and $m^{i+1}=m^i|b_i$ for $i=1,\dots,k$, and $\delta_i = q(m^i) - \one f_i$. We can rearrange this to find that
\begin{equation}\label{eq:kstepu}
u_t +\frac{1}{k}\min_{|f_1|\leq 1}\max_{b_1=\pm 1}\cdots \min_{|f_{k}|\leq 1}\max_{b_{k}=\pm 1}\left\{ \eps^{-1}\sum_{i=1}^kb_i \langle\nabla u,\delta_i\rangle+ \frac12\sum_{i,j=1}^kb_ib_j\left\langle \nabla^2u\,\delta_i,\delta_j\right\rangle \right\} = 0.
\end{equation}
This allows us to reduce the problem to a repeated two-player game with a quadratic payoff function---the repeated min-max problem in \eqref{eq:kstepu}. We establish asymptotics for the optimal \emph{value} of this game as $k\to \infty$ and $\eps\to 0$, and find that the initial state $m$ averages out of the equation. This allows us to obtain a PDE that is independent of the state $m$, provided that we take $k\to \infty$ as $N\to \infty$.  Along the way, we obtain an asymptotically optimal strategy for the investor, which renders the market indifferent, but this is not directly used in the proofs.

In the previous work \cite{drenska2019PDE}, the authors proved convergence rates of $O(\eps)$ in the context of Theorem \ref{thm:main1} and $O(\eps|\log(\eps)|)$ in the context of Theorem \ref{thm:main2}, for $n=2$ and $d\leq 4$, while also obtaining upper and lower bounds on the value function for $n=2$ and $d \geq 5$. Our convergence rates of $O(\eps^{1/3})$ and $O(\eps^{1/3}|\log(\eps)|)$ are worse, due to the fact that our $k$-step dynamic programming principle \eqref{eq:kstepu} leads to larger errors from Taylor expansion, on the order of $O(k^3\eps^3)$ instead of $O(\eps^3)$, and the fact that we must send $k\to \infty$ as $N\to \infty$ to ensure the initial state $m$ averages out in \eqref{eq:kstepu}. We show in Theorem \ref{thm:cell} that the state $m$ averages out at a rate of $O\left( \frac1k \right)$, and this must be balanced with the Taylor expansion errors to obtain our final convergence rate. It would be interesting to combine our observations of the optimal strategy in \eqref{eq:opt_strategy} with the methods used in \cite{drenska2019PDE} in an attempt to improve the rates in Theorems \ref{thm:main1} and \ref{thm:main2} to match those in \cite{drenska2019PDE} when $d \geq 5$ and $n\geq 3$. We expect this will require some slight modifications to the strategy \eqref{eq:opt_strategy} so that the gradient $p$ and Hessian $X$ are updated at each step of the game, instead of once per $k$-block.

Let us also mention that, at first sight, the PDE \eqref{eq:PDE} and the PDE identified in \cite{drenska2019PDE} appear quite different. We show here that they are equivalent when $n=2$, and thus \eqref{eq:PDE} is the appropriate generalization for $n\geq 3$. When $n=2$, we write $p^\perp = (-p_2,p_1)$ for $p\in \R^2$, and we note that for any $m\in \B^d$ and $p\in \R^2$ with $\langle p,\one\rangle > 0$ we have
\[q(m) - \frac{\langle p,q(m)\rangle}{\langle p,\one\rangle}\one = \frac{q_2(m)-q_1(m)}{\langle p,\one\rangle}p^\perp.\]
Therefore, the equation \eqref{eq:PDE} becomes
\[u_t + C^\#\frac{\langle \nabla^2 u\nabla u^\perp,\nabla u^\perp\rangle}{\langle \nabla u,\one\rangle^2} = 0,\]
where
\[C^\# = \frac{1}{2^{d+1}}\sum_{m\in \B^d}(q_2(m)-q_1(m))^2.\]
This is the same as the PDE identified in \cite{drenska2019PDE} (see, e.g., \cite[Eq.~(5.1)]{drenska2019PDE}), except that in \cite{drenska2019PDE}, the equation is written in the rotated coordinates $(\xi,\eta) = (x_1-x_2,x_1+x_2)$ (we note that the variables $\xi$ and $\eta$ have completely different meanings in our paper, as we do not use the rotated coordinates).

\section{Analysis of the discrete two-player game}
\label{sec:game}

We prove several properties of the discrete game, including monotonicity, translation invariance, and discrete regularity. It will be convenient to extend the concatenation notation defined in \eqref{eq:concat1} to allow for concatenation of longer symbols. We thus use the notation $m|s$ for concatenation of $m\in \B^d$ and $s\in \B^j$, with the result being an element of $\B^d$ ending with $s$. If $j < d$ then 
\[m|s = (m_{j+1},m_{j+2},\dots,m_d,s_1,s_2,\dots,s_j),\]
and if $j \geq d$ then
\[m|s = (s_{j-d+1},s_{j-d+2},\dots,s_j).\]
The notation $m|s|b$ means $(m|s)|b$, and so on. For simplicity we write $m_{+}:=m|1$ and $m_-:=m|-1$. We note that $m|s$ is exactly the state arrived at by starting at node $m$ on the de Bruijn graph and following the edges defined by $s_1,s_2,\dots,s_j$.

A number of properties of the value function $V_N$ follow directly from Definition \ref{def:value}. 
\begin{lemma}\label{lem:MSTI}
Let $N\geq 1$, $1 \leq \ell \leq N$, and $m\in \B^d$.
The following hold.
\begin{enumerate}[(i)]
\item If \eqref{eq:strict_inc} holds, then for all $x\in \R^n$ and $v\in [0,\infty)^n$ we have
\[V_N(x+v, \ell;m) \geq V_N(x,\ell;m) + \cg\langle v,\one\rangle.\]
\item If \eqref{eq:translation} holds, then for all $x\in \R^n$ and $t>0$  
\[V_N (x +t\one,\ell; m) = V_N(x,\ell; m) + t.\]
\item If $g$ is Lipschitz continuous then for all $x,y\in \R^n$ we have
\[|V_N(x,\ell;m) - V(y,\ell;m)| \leq \Lip(g) |x-y|.\]
\end{enumerate}
\end{lemma}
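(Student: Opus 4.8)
The plan is to handle all three parts at once by isolating the abstract min--max operator built into \eqref{eq:valuedef}. Fix $N$, $\ell$, and $m\in\B^d$, and for a real-valued function $h$ of the strategy profile $(f_\ell,b_\ell,\dots,f_{N-1},b_{N-1})$ (ranging over the compact set $[-1,1]\times\{\pm1\}\times\cdots$) set
\[\mathcal V[h] := \min_{|f_\ell|\le1}\max_{b_\ell=\pm1}\cdots\min_{|f_{N-1}|\le1}\max_{b_{N-1}=\pm1} h(f_\ell,b_\ell,\dots,f_{N-1},b_{N-1}).\]
Each individual $\min$ and $\max$ is monotone, commutes with adding a constant, and is $1$-Lipschitz in the sup-norm (e.g.\ $|\min_a\phi(a)-\min_a\psi(a)|\le\sup_a|\phi(a)-\psi(a)|$, and likewise for $\max$); composing finitely many such operations, a trivial induction on the number of nested quantifiers shows $\mathcal V$ inherits the same three properties. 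The key observation is that in \eqref{eq:valuedef} the trajectory $m^\ell=m,\ m^{i+1}=m^i|b_i$ depends only on $m$ and the market's choices $b_\ell,\dots,b_{i-1}$, so that $V_N(x,\ell;m)=\mathcal V[G_x]$ where $G_x(f_\ell,b_\ell,\dots):=g\big(x+S\big)$ and $S:=\sum_{i=\ell}^{N-1}b_i(q(m^i)-f_i\one)$ is one and the same expression no matter which base point is fed to $g$.

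With this in hand, all three statements are immediate. For (i): since $v\in[0,\infty)^n$, hypothesis \eqref{eq:strict_inc} gives $G_{x+v}=g(x+v+S)\ge g(x+S)+\cg\langle v,\one\rangle=G_x+\cg\langle v,\one\rangle$ pointwise in the strategy profile, so monotonicity and the constant-shift property of $\mathcal V$ yield $V_N(x+v,\ell;m)\ge V_N(x,\ell;m)+\cg\langle v,\one\rangle$. For (ii): \eqref{eq:translation} gives $G_{x+t\one}=g(x+t\one+S)=g(x+S)+t=G_x+t$ pointwise, whence $\mathcal V[G_{x+t\one}]=\mathcal V[G_x]+t$ (this works for every $t\in\R$, in particular $t>0$). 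For (iii): $|G_x-G_y|=|g(x+S)-g(y+S)|\le\Lip(g)|x-y|$ pointwise, uniformly over strategy profiles, so the sup-norm $1$-Lipschitz property of $\mathcal V$ gives $|V_N(x,\ell;m)-V_N(y,\ell;m)|\le\Lip(g)|x-y|$ (correcting the evident typo $V(y,\ell;m)$ in the statement).

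I do not expect any real obstacle here: the only point requiring a modicum of care is the verification that the nested, alternating $\min$/$\max$ operator $\mathcal V$ is monotone, constant-shift-covariant, and sup-norm $1$-Lipschitz, and each of these reduces by induction to the corresponding one-variable fact. An alternative, essentially equivalent route would be a backward induction on $N-\ell$ using the one-step dynamic programming principle, but the operator viewpoint above is cleaner and makes transparent why $g$'s structural properties pass to $V_N$ for free.
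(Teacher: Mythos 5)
Your proposal is correct and is essentially the paper's argument, made explicit: the paper proves (iii) by pushing the pointwise bound $g(x+S)\le g(y+S)+\Lip(g)|x-y|$ through the nested $\min/\max$ (using exactly the monotonicity and constant-shift properties you isolate) and dismisses (i) and (ii) as following directly from the definition. Packaging this as three inherited properties of an abstract operator $\mathcal V$ is a clean way to state what the paper leaves implicit, but the mathematical content is the same.
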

\begin{proof}
The proofs of (i) and (ii) follow directly from Definition \ref{def:value}. For (iii) we have
\begin{align*}
V_N(x,\ell;m) &= \min_{|f_{\ell}|\leq 1}\max_{b_{\ell}=\pm 1}\cdots \min_{|f_{N-1}|\leq 1}\max_{b_{N-1}=\pm 1}g\left( x + \sum_{i=\ell}^{N-1} b_i(q(m^i) - f_i\one) \right)\\
&\leq \min_{|f_{\ell}|\leq 1}\max_{b_{\ell}=\pm 1}\cdots \min_{|f_{N-1}|\leq 1}\max_{b_{N-1}=\pm 1}\left[g\left( y + \sum_{i=\ell}^{N-1} b_i(q(m^i) - f_i\one) \right) + \Lip(g)|x-y|\right]\\
&=V_N(y,\ell;m) + \Lip(g)|x-y|,
\end{align*}
which completes the proof.
\end{proof}

A key property of the value function is the dynamic programming principle. We record below a $k$-step version for $V_N$.
\begin{proposition}[Dynamic Programming Principle]\label{prop:dpp}
For any $N\geq 1$,  $x\in \R^n$, $m\in \B^d$, $k\geq 1$ and $\ell\leq N-k$ it holds that
\begin{equation*}
V_N(x,\ell;m) = \min_{|f_1|\leq 1}\max_{b_1=\pm 1}\cdots \min_{|f_{k}|\leq 1}\max_{b_{k}=\pm 1} V_N\left(x +  \sum_{i=1}^{k} b_i(q(m^i) - \one f_i),\ell + k;m^{k+1}\right),
\end{equation*}
where $m^1 = m$ and $m^{i+1} = m^i|b_i$ for $i=1,\dots,k$.
\end{proposition}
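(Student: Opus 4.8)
The plan is to prove the $k$-step dynamic programming principle by induction on $k$, with the base case $k=1$ being the standard one-step version, which itself follows by splitting off the first minimax in the definition \eqref{eq:valuedef} of $V_N$. More precisely, for $k=1$ I would observe directly from Definition \ref{def:value} that
\[V_N(x,\ell;m) = \min_{|f_\ell|\leq 1}\max_{b_\ell=\pm 1} V_N\bigl(x + b_\ell(q(m)-\one f_\ell),\ell+1;m|b_\ell\bigr),\]
since the inner nested minimax expression, after peeling off the outermost $\min_{f_\ell}\max_{b_\ell}$, is exactly the definition of $V_N$ started on day $\ell+1$ from regret $x + b_\ell(q(m^\ell)-f_\ell\one)$ and history $m^{\ell+1}=m^\ell|b_\ell=m|b_\ell$. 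Here I use that the dummy index in \eqref{eq:valuedef} runs over $i=\ell,\dots,N-1$, and that the history recursion $m^{i+1}=m^i|b_i$ is consistent with restarting the game.

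For the inductive step, suppose the statement holds for some $k$ with $1 \leq k \leq N-1-\ell$; I want it for $k+1$. Apply the $k$-step identity to $V_N(x,\ell;m)$, which expresses it as a nested minimax over $f_1,b_1,\dots,f_k,b_k$ of $V_N(y,\ell+k;m^{k+1})$, where $y = x + \sum_{i=1}^k b_i(q(m^i)-\one f_i)$. Since $\ell+k \leq N-1$, apply the one-step DPP (the $k=1$ case already established, now with starting day $\ell+k$ and starting history $m^{k+1}$) to each such $V_N(y,\ell+k;m^{k+1})$, obtaining
\[V_N(y,\ell+k;m^{k+1}) = \min_{|f_{k+1}|\leq 1}\max_{b_{k+1}=\pm 1} V_N\bigl(y + b_{k+1}(q(m^{k+1})-\one f_{k+1}),\ell+k+1;m^{k+1}|b_{k+1}\bigr).\]
Substituting this into the innermost slot of the nested minimax and noting that $m^{k+2}=m^{k+1}|b_{k+1}$ and $y + b_{k+1}(q(m^{k+1})-\one f_{k+1}) = x + \sum_{i=1}^{k+1} b_i(q(m^i)-\one f_i)$ yields exactly the $(k+1)$-step identity. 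The only subtlety is that the substitution happens inside a nested sequence of $\min$'s and $\max$'s, so I should note that replacing a function by an equal function inside any such expression preserves the value — this is immediate since the identity holds pointwise in $y$, hence for every realization of the outer variables $f_1,b_1,\dots,f_k,b_k$.

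I do not expect a genuine obstacle here: the statement is essentially a bookkeeping exercise, and the content is entirely in correctly tracking the history recursion $m^{i+1}=m^i|b_i$ through the peeling-off procedure. The one place to be careful is the range of $\ell$ and $k$: the hypothesis $\ell \leq N-k$ is exactly what guarantees that all the intermediate days $\ell, \ell+1, \dots, \ell+k$ on which we invoke the one-step DPP satisfy the requirement (day $< N$) needed to apply the recursive case rather than the terminal case $V_N(\cdot,N;\cdot)=g(\cdot)$ of Definition \ref{def:value}, and this should be checked explicitly at each application of the inductive step.
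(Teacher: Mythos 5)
Your proof is correct, but it takes a somewhat different (and slightly longer) route than the paper's. The paper proves the identity in one shot: it writes out the defining nested minimax for $V_N(x,\ell;m)$ over $f_\ell,b_\ell,\dots,f_{N-1},b_{N-1}$, observes that the tail of this expression (the nested minimax over $f_{\ell+k},b_{\ell+k},\dots,f_{N-1},b_{N-1}$) is, by Definition~\ref{def:value} again, exactly $V_N\bigl(x+\sum_{i=\ell}^{\ell+k-1}b_i(q(\tilde{m}^i)-f_i\one),\,\ell+k;\,\tilde{m}^{\ell+k}\bigr)$, and then re-indexes the remaining outer variables $i\mapsto i-\ell+1$. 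That is a direct block decomposition of the nested minimax, with no induction. You instead establish the one-step case and then induct on $k$, substituting the one-step identity into the innermost slot. Both arguments are valid bookkeeping; the tradeoff is that the direct decomposition dispenses with the inductive machinery and the attendant checking of range conditions at each step, whereas your inductive argument makes the role of the hypothesis $\ell\leq N-k$ (as what permits the final application of the one-step case at day $\ell+k-1<N$) explicit, which you handle correctly. Your remark that substituting an identity that holds pointwise into a nested $\min/\max$ preserves the value is exactly the justification needed for the inductive step, and your base case is the same peeling-off argument the paper implicitly relies on.
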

\begin{proof}
By Definition \ref{def:value} we have
\[V_N(x,\ell;m) = \min_{|f_{\ell}|\leq 1}\max_{b_{\ell}=\pm 1}\cdots \min_{|f_{N-1}|\leq 1}\max_{b_{N-1}=\pm 1}g\left( x + \textstyle\sum_{i=\ell}^{N-1} b_i(q(\tilde{m}^i) - f_i\one) \right)\]
where $\tilde{m}^\ell=m$ and $\tilde{m}^{i+1}=\tilde{m}^i|b_i$ for $i=\ell,\dots,N-1$. Noting that
\begin{align*}
&V_N\left(x + \textstyle \sum_{i=\ell}^{\ell+k-1} b_i(q(m^i) - f_i\one),\ell+k;\tilde{m}^{\ell+k}\right) \\
&\hspace{1in}=\min_{|f_{k+\ell}|\leq 1}\max_{b_{k+\ell}=\pm 1}\cdots \min_{|f_{N-1}|\leq 1}\max_{b_{N-1}=\pm 1}g\left( x + \textstyle \sum_{i=k}^{N-1} b_i(q(\tilde{m}^i) - f_i\one) \right),
\end{align*}
we have that
\[V_N(x,\ell;m)= \min_{|f_{\ell}|\leq 1}\max_{b_{\ell}=\pm 1}\cdots \min_{|f_{\ell+k-1}|\leq 1}\max_{b_{\ell+k-1}=\pm 1}V_N\left(x + \textstyle \sum_{i=\ell}^{\ell+k-1} b_i(q(\tilde{m}^i) - f_i\one),\ell+k;\tilde{m}^{\ell+k}\right).\]
Re-indexing $i$ we have
\[V_N(x,\ell;m) = \min_{|f_1|\leq 1}\max_{b_1=\pm 1}\cdots \min_{|f_{k}|\leq 1}\max_{b_{k}=\pm 1} V_N\left(x + \textstyle \sum_{i=1}^{k} b_i(q(m^i) - \one f_i),\ell + k;m^{k+1}\right),\]
where $m^1=m$ and $m^{i+1}=m^i|b_i$ for $i=1,\dots,k$, which completes the proof.
\end{proof}

We immediately obtain a dynamic programming principle for the rescaled value function $u_N$ defined in \eqref{eq:uN}.
\begin{proposition}[Rescaled Dynamic Programming Principle]\label{prop:dppuN}
For  $N\geq 1$, $m\in \B^d$, $k\geq 1$ and $0\leq t \leq 1-N^{-1}k$, it holds that
\begin{equation*}
u_N(x,t;m) = \min_{|f_1|\leq 1}\max_{b_1=\pm 1}\cdots \min_{|f_{k}|\leq 1}\max_{b_{k}=\pm 1} u_N\bigg(x +  N^{-\frac12}\sum_{i=1}^{k} b_i(q(m^i) - \one f_i),t + N^{-1}k;m^{k+1}\bigg).
\end{equation*}
where $m^1 = m$ and $m^{i+1} = m^i|b_i$ for $i=1,\dots,k$.
\end{proposition}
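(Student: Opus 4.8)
The plan is to derive the rescaled dynamic programming principle as a direct corollary of the $k$-step dynamic programming principle for $V_N$ established in Proposition~\ref{prop:dpp}, by substituting the definition \eqref{eq:uN} of $u_N$ and performing the change of variables dictated by the parabolic rescaling. Concretely, I would start from the identity $u_N(x,t;m) = \tfrac{1}{\sqrt N}V_N(\sqrt N x, \lceil Nt\rceil; m)$, set $\ell = \lceil Nt\rceil$, and note that the hypothesis $0 \le t \le 1 - N^{-1}k$ is exactly what guarantees $\ell \le N-k$, so that Proposition~\ref{prop:dpp} applies with this $\ell$.

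The main computation is bookkeeping with the rescaling. Applying Proposition~\ref{prop:dpp} to $V_N(\sqrt N x, \ell; m)$ gives
\[
V_N(\sqrt N x,\ell;m) = \min_{|f_1|\le 1}\max_{b_1=\pm 1}\cdots\min_{|f_k|\le 1}\max_{b_k=\pm 1} V_N\Big(\sqrt N x + \sum_{i=1}^k b_i(q(m^i)-\one f_i),\ \ell+k;\ m^{k+1}\Big).
\]
Dividing by $\sqrt N$, pulling the $\tfrac{1}{\sqrt N}$ inside all the min/max operators (which is legitimate since multiplication by a positive constant commutes with $\min$ and $\max$), and writing $\sqrt N x + \sum_i b_i(q(m^i)-\one f_i) = \sqrt N\big(x + N^{-1/2}\sum_i b_i(q(m^i)-\one f_i)\big)$, the right-hand side becomes
\[
\min_{|f_1|\le 1}\max_{b_1=\pm 1}\cdots\min_{|f_k|\le 1}\max_{b_k=\pm 1} \frac{1}{\sqrt N} V_N\Big(\sqrt N\big(x + N^{-1/2}\textstyle\sum_{i=1}^k b_i(q(m^i)-\one f_i)\big),\ \ell+k;\ m^{k+1}\Big).
\]
The one point requiring a small argument is that $\tfrac{1}{\sqrt N}V_N(\sqrt N y, \ell+k; m^{k+1}) = u_N(y, t + N^{-1}k; m^{k+1})$, i.e.\ that $\lceil Nt\rceil + k = \lceil N(t + N^{-1}k)\rceil$; this holds because $N(t+N^{-1}k) = Nt + k$ and $k$ is an integer, so $\lceil Nt + k\rceil = \lceil Nt\rceil + k$. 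Substituting $y = x + N^{-1/2}\sum_i b_i(q(m^i)-\one f_i)$ then yields exactly the claimed identity.

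I do not anticipate a genuine obstacle here; the statement is a routine corollary. The only things to be careful about are (a) the ceiling-function identity $\lceil Nt\rceil + k = \lceil N(t+N^{-1}k)\rceil$, which relies on $k \in \N$, and (b) checking that the range condition $0 \le t \le 1 - N^{-1}k$ translates into $\ell = \lceil Nt\rceil \le N - k$, which is needed for Proposition~\ref{prop:dpp} to be applicable (indeed $\lceil Nt\rceil \le \lceil N - k\rceil = N-k$ since $N-k \in \Z$). With these two observations in place, the proof is a one-line invocation of Proposition~\ref{prop:dpp} followed by the substitution above.
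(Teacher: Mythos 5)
Your proof is correct and takes essentially the same approach as the paper: both unwind the definition of $u_N$, apply the $k$-step dynamic programming principle for $V_N$ from Proposition~\ref{prop:dpp}, and rescale. The only cosmetic difference is that the paper first proves the identity for $t$ of the special form $N^{-1}\ell$ and then extends to general $t$ via the observation that $u_N(x,t;m)$ depends on $t$ only through $\lceil Nt\rceil$, whereas you work with general $t$ from the outset and explicitly verify the ceiling-function identity $\lceil Nt\rceil + k = \lceil N(t+N^{-1}k)\rceil$; both handle the same bookkeeping correctly.
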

\begin{proof}
By the definition of $u_N$ \eqref{eq:uN} we have
\[V_N(x,\ell;m) = \sqrt{N}u_N(N^{-1/2}x,N^{-1}\ell;m).\]
By Proposition \ref{prop:dpp} we thus have
\begin{align*}
u_N(N^{-1/2}x,N^{-1}\ell;m) =& \min_{|f_1|\leq 1}\max_{b_1=\pm 1}\cdots \min_{|f_{k}|\leq 1}\max_{b_{k}=\pm 1} \\
&\hspace{3mm}u_N\left(N^{-1/2}x + \textstyle N^{-1/2}\sum_{i=1}^{k} b_i(q(m^i) - \one f_i),N^{-1}\ell + N^{-1}k;m^{k+1}\right)
\end{align*}
for any $x\in \R^n$, $m\in \B^d$, $k\geq 1$ and $\ell\leq N-k$, where $m^i$ are given as in Proposition \ref{prop:dpp}. Setting $y=N^{-1/2}x$ and $t=N^{-1}\ell$ we obtain
\begin{align*}
u_N(y,t;m) =& \min_{|f_1|\leq 1}\max_{b_1=\pm 1}\cdots \min_{|f_{k}|\leq 1}\max_{b_{k}=\pm 1} u_N\bigg(y +  N^{-\frac12}\sum_{i=1}^{k} b_i(q(m^i) - \one f_i),t + N^{-1}k;m^{k+1}\bigg).
\end{align*}
Since this also holds for any $t\in [0,1]$ with $\lceil Nt \rceil = \ell$, the proof is complete.
\end{proof}
\begin{remark}
Notice that the dynamic programming principle given in Propositions \ref{prop:dpp} and \ref{prop:dppuN} are coupled systems of $2^d$ equations involving all $2^d$ value functions. In particular, the states $m$ on the left hand side and $m^{k+1}$ on the right hand side, are in general \emph{different} states on the de Bruijn graph. This causes some difficulties with obtaining a Hamilton-Jacobi-Isaacs equation directly from the dynamic programming principle, and is the reason we consider a $k$-step dynamic programming principle, instead of the usual $1$-step dynamic programming principle. As we show in Section \ref{sec:cell} below, when $k$ is large enough, the initial state $m$ in the $k$-step dynamic programming principle is \emph{forgotten} (it averages out over the de Bruijn graph), and this allows us to decouple the $2^d$ dynamic programming principle equations into a single averaged equation. 
\label{rem:dpp}
\end{remark}

\section{The local problem}
\label{sec:cell}

We now study a local problem that arises from the $k$-step dynamic programming principle identified in Proposition \ref{prop:dppuN}. We make the following definition.

\begin{definition}[Local problem]\label{def:localproblem}
Let $X\in \S(n)$\footnote{$\S(n)$ denotes the space of $n\times n$ real symmetric matrices.}, $p\in \R^n$, $k \geq 1$, $\eps>0$, and $m\in \B^d$. The \emph{local problem} is given by
\begin{equation}\label{eq:Cell}
\L_{k,\eps}(X,p,m)=\min_{|f_1|\leq 1}\max_{b_1=\pm 1}\cdots \min_{|f_k|\leq 1}\max_{b_k=\pm 1}\left\{\eps^{-1} \sum_{i=1}^kb_i\langle p,\delta_i\rangle  + \frac12\sum_{i,j=1}^kb_ib_j\langle X\delta_i,\delta_j\rangle\right\},
\end{equation}
where  $m^1=m$ and $m^{i+1}=m^i|b_i$ for $i=1,\dots,k$, and 
\begin{equation}\label{eq:deltai}
\delta_i = q(m^i) - \one f_i.
\end{equation}
We will write $\L_{k,\eps}$ in place of $\L_{k,\eps}(X,p,m)$ when the values of $X,p$ and $m$ are clear from context.
\end{definition}
The motivation for the local problem was given in Section \ref{sec:overview}. In particular, the local problem is the main operator appearing in \eqref{eq:kstepu} with $p=\nabla u$ and $X=\nabla^2 u$, and so \eqref{eq:kstepu} can be written as
\[u_t + \frac{1}{k}\L_{k,\eps}(\nabla^2u,\nabla u,m) = 0.\]
We show in this section that the initial state $m$ averages out of the local problem $\L_{k,\eps}(X,p,m)$ as $k\to \infty$ at a rate of $O\left( \frac{1}{k} \right)$. This allows us to obtain a PDE that is independent of the initial state $m$. The situation is similar to how small scale oscillations in a cell problem average out in homogenization theory. In fact, the local problem is very much analogous to a cell problem from homogenization, except that the oscillations in the local problem occur in an auxiliary variable living on a discrete graph (the de Bruijn graph). 

Our main result in this section is the following convergence rate for the local problem.
\begin{theorem}[Local problem]\label{thm:cell}
Assume \eqref{eq:redund} holds. Let $X\in \S(n)$, $p\in (0,\infty)^n$, $m\in \B^d$, $k\geq d+1$, $\eps>0$, and set $\gamma_p = \min_{1\leq i \leq n}p_i$. Then there exists $C,c>0$, depending only on $n$, such that whenever $\|X\|k\eps \leq  c\,\cq \gamma_p$ we have
\begin{equation}\label{eq:Cellrate}
\left|\frac{1}{k}\L_{k,\eps}(X,p,m) -\frac{1}{2^{d+1}}\sum_{\eta \in Q(p)} \langle X\eta,\eta\rangle\right|\leq C\|X\|\left(\frac{d}{k} + \|X\| \gamma_p^{-1}k\eps\right).
\end{equation}
\end{theorem}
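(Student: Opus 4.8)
The plan is to analyze the $k$-step repeated min-max game in \eqref{eq:Cell} by first identifying the investor's near-optimal strategy that renders the market asymptotically indifferent, and then showing that the resulting value is a weighted average over a de Bruijn tree of depth $k$ rooted at $m$, which in turn converges to the uniform average over $\B^d$ at rate $O(d/k)$. The error terms split naturally into two contributions, matching the two terms on the right of \eqref{eq:Cellrate}: a graph-averaging error of order $d/k$, coming from the difference between the tree average and the stationary average, and a perturbation error of order $\|X\|\gamma_p^{-1} k\eps$, coming from the lower-order term $\eps^{-1}\sum_i b_i\langle p,\delta_i\rangle$ not being exactly cancellable.

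First I would set up the indifference strategy. Working backwards, I would define $f_i$ recursively so that the coefficient of $b_i$ in the payoff (given $b_1,\dots,b_{i-1}$ already chosen and accounting for the influence of $b_i$ on future play through the graph position $m^{i+1}$) vanishes; this is exactly the strategy \eqref{eq:opt_strategy} sketched in Remark \ref{rem:optimal}, built from the auxiliary functions $\H_i$. The key algebraic identity is that when $\langle p, \one\rangle \neq 0$, the choice $f_i \approx \langle p, q(m^i)\rangle/\langle p,\one\rangle$ makes $\langle p,\delta_i\rangle$ vanish to leading order, so $\delta_i \approx \xi(m^i) = q(m^i) - \frac{\langle p,q(m^i)\rangle}{\langle p,\one\rangle}\one \in Q(p)$; the $O(\eps)$ correction in \eqref{eq:opt_strategy} then cancels the cross terms $b_i b_j \langle X\delta_i,\delta_j\rangle$ with $j < i$ against the future-influence terms. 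Admissibility $|f_i|\le 1$ must be checked: since $q(m^i)\in[-1,1]^n$ and, using \eqref{eq:redund} through the constant $\cq$, the leading term lies strictly inside $[-1,1]$ with margin controlled by $\cq\gamma_p/\langle p,\one\rangle$, the smallness hypothesis $\|X\|k\eps \le c\,\cq\gamma_p$ guarantees the $O(\|X\|k\eps)$ correction keeps $f_i$ admissible. With this strategy the value of the game equals, up to the stated errors, the diagonal sum $\frac12\sum_{i=1}^k \langle X\xi(m^i),\xi(m^i)\rangle$ averaged over the market's play — and since the market is indifferent, this is $\H_k(m)$, giving an upper bound on $\frac1k\L_{k,\eps}$; a symmetric argument (the market can always penalize deviations from the indifference strategy) gives the matching lower bound.

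Next I would prove the graph-averaging estimate: for any function $\psi:\B^d\to\R$ (here $\psi(m) = \frac12\langle X\xi(m),\xi(m)\rangle$, so $\|\psi\|_\infty \lesssim \|X\|$ since $|\xi(m)|$ is bounded independent of $d$), the depth-$k$ tree average $\frac1k \H_k(m)$ — equivalently $\frac{1}{k}\sum_{i=0}^{k-1} 2^{-i}\sum_{s\in\B^i}\psi(m|s)$ — differs from $\frac{1}{2^d}\sum_{m'\in\B^d}\psi(m')$ by $O(\|\psi\|_\infty d/k)$. This is where I expect the main work, though it is the kind of estimate that should be clean on the de Bruijn graph: the point is that the random walk on $\B^d$ (move from $m$ to $m_+$ or $m_-$ with equal probability) mixes to the uniform distribution in exactly $d$ steps — indeed after $d$ steps the state is $(b_1,\dots,b_d)$ for i.i.d. uniform $b_i$, independent of the start. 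So the Cesàro average of $\psi$ along the walk of length $k$ is within $d/k$ of the stationary mean. I would make this quantitative by splitting the sum $\frac1k\sum_{i=0}^{k-1}(\cdots)$ into the first $d$ terms (contributing $O(\|\psi\|_\infty d/k)$) and the remaining terms (each exactly equal to the stationary mean once $i\ge d$), using that the uniform distribution on $\B^d$ is stationary and is reached exactly at step $d$. Finally, noting $\frac{1}{2^{d+1}}\sum_{\eta\in Q(p)}\langle X\eta,\eta\rangle = \frac{1}{2^d}\sum_{m'\in\B^d}\frac12\langle X\xi(m'),\xi(m')\rangle$ (accounting for the factor of $2$ and that $Q(p)$ is the multiset $\{\xi(m'):m'\in\B^d\}$), the two estimates combine to give \eqref{eq:Cellrate}.

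The main obstacle, as flagged, is bookkeeping the perturbation error carefully enough to get the clean $\|X\|(\frac dk + \|X\|\gamma_p^{-1}k\eps)$ form: one must track how the $O(\eps)$ corrections in the strategy \eqref{eq:opt_strategy} propagate through $k$ steps — naively this looks like it could blow up as $k^2\eps$ or worse — and show the accumulated error is genuinely $O(\|X\|^2\gamma_p^{-1}k\eps)$ after cancellations, uniformly in $m$. Controlling $h_{\ell k+i}$ away from zero (so the division in \eqref{eq:opt_strategy} is safe) is part of the same smallness argument. I would handle this by an inductive estimate on the running regret $\sum_{j\le i} b_j\delta_j$, showing it stays $O(\sqrt{i})$ in a suitable sense, or more crudely $O(i)$, which is enough; the hypothesis $\|X\|k\eps \le c\cq\gamma_p$ is precisely what closes the induction.
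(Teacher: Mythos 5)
Your proposal follows essentially the same route as the paper: first reduce $\L_{k,\eps}$ to the tree-sum $\H_k(m)$ via the one-step indifference conditions and a backward induction through the $k$ layers (this is exactly the paper's Lemma~\ref{lem:cell}, with Lemma~\ref{lem:opt} solving each min-max), then observe that the de Bruijn walk reaches its uniform stationary distribution exactly at step $d$ so $\tfrac1k\H_k$ differs from the graph average by $O(d\|X\|/k)$ (this is Proposition~\ref{prop:S}~(iii)). The one point where you hesitate --- whether the accumulated perturbation error needs genuine cancellations to avoid a $k^2\eps$ blow-up --- is a non-issue, as you yourself note: the crude $O(i)$ bound on the running sum yields $|\L_{k,\eps}-\H_k|=O(\|X\|^2\gamma_p^{-1}k^2\eps)$, and dividing by $k$ already gives the stated $O(\|X\|^2\gamma_p^{-1}k\eps)$.
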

Here, $\|X\|$ is the operator norm of $X$ given by 
\[\|X\| = \sup\{|X\eta| \, : \, \eta \in \R^n \text{ and }|\eta|=1\}.\]
We also recall $\cq>0$ is defined in \eqref{eq:thetaq_def}. 

The remainder of this section is devoted to proving Theorem \ref{thm:cell}. For this, we require some additional notation. For $p\in \R^n$ and $m\in \B^n$ we define 
\begin{equation}\label{eq:etadef}
\xi(p,m) = q(m) - \frac{\langle p, q(m)\rangle}{\langle p, \one\rangle}\one.
\end{equation}
For $p\in \R^n$, $X\in \S(n)$ and $m\in \B^n$, we define $\H_k(X,p,m)$ by $\H_0(X,p,m)=0$ and 
\begin{equation}\label{eq:Hm}
\H_{k}(X,p,m) = \frac{1}{2}\langle X\xi(p,m),\xi(p,m)\rangle + \frac{1}{2}\left( \H_{k-1}(X,p,m_+) + \H_{k-1}(X,p,m_-) \right)
\end{equation}
for $k\geq 1$. We will often make the dependence on $X$ and $p$ implicit and write $\xi(m) = \xi(p,m)$ and $\H_k(m)=\H_k(X,p,m)$, to reduce the notational burden. Notice that 
\[-\langle p,\one\rangle \leq \langle p,q(m)\rangle \leq \langle p,\one\rangle,\]
and so $\xi(m) \in [-2,2]^n$. This implies that $|\xi(m)|\leq 2\sqrt{n}$.

We record some important properties of $\H_k(m)$.
\begin{proposition}\label{prop:S}
There exists $C>0$ depending only on $n$ such that the following hold.
\begin{enumerate}[(i)]
\item For all $m\in \B^d$, $\H_1(m)=\frac{1}{2}\langle X\xi(m),\xi(m)\rangle$ and for $k\geq 2$
\begin{equation*}
\H_k(m) =\frac{1}{2}\langle X\xi(m),\xi(m)\rangle+ \sum_{\ell=1}^{k-1} \frac{1}{2^{\ell+1}}\sum_{s\in \B^\ell} \langle X\xi(m|s),\xi(m|s)\rangle.
\end{equation*}
\item For all $m\in \B^d$ and $k\geq 0$ we have
\begin{equation*}
|\H_k(X,p,m_+) - \H_k(X,p,m_-)|\leq Cd\|X\|.
\end{equation*}
\item For all $m\in \B^d$ and $k\geq d+1$ we have
\[\left| \H_k(X,p,m) - \frac{k}{2^{d+1}}\sum_{\eta \in Q(p)} \langle X\eta,\eta\rangle\right|\leq Cd\|X\|.\]
\end{enumerate}
\end{proposition}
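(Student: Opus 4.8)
\textbf{Proof proposal for Proposition \ref{prop:S}.}

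The plan is to establish the three parts in order, using (i) as the workhorse for (ii) and (iii). For part (i), I would unroll the recursion \eqref{eq:Hm} directly. Writing $a(m) := \tfrac12\langle X\xi(m),\xi(m)\rangle$, the recursion reads $\H_k(m) = a(m) + \tfrac12(\H_{k-1}(m_+) + \H_{k-1}(m_-))$. Iterating, one sees that $\H_{k-1}(m_\pm)$ contributes $a(m|s)$ for each length-$1$ string $s$ with weight $\tfrac12$, then at the next level each $a(m|s)$ for length-$2$ strings $s$ with weight $\tfrac14$, and so on: the level-$\ell$ term is $\sum_{s\in\B^\ell} 2^{-(\ell+1)} a(m|s)$. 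A clean way to present this is induction on $k$: the base cases $\H_0 = 0$ and $\H_1(m) = a(m)$ are immediate, and the inductive step plugs the formula for $\H_{k-1}(m_+)$ and $\H_{k-1}(m_-)$ into \eqref{eq:Hm}, observing that the union of $\{+\,|\,s : s\in\B^{\ell-1}\}$ and $\{-\,|\,s : s\in\B^{\ell-1}\}$ — more precisely, prepending the first symbol — is exactly $\B^\ell$. This gives the stated closed form, and (noting $|\xi(m)|\le 2\sqrt n$) each term satisfies $|a(m|s)|\le 2n\|X\|$, a bound I'll use repeatedly.

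For part (ii), the point is that $\H_k(m_+)$ and $\H_k(m_-)$ are sums of the \emph{same type} of terms over de Bruijn trees rooted at $m_+$ and $m_-$, and these trees overlap heavily. Using (i), $\H_k(m_\pm) = a(m_\pm) + \sum_{\ell=1}^{k-1} 2^{-(\ell+1)}\sum_{s\in\B^\ell} a(m_\pm|s)$. Now here is the key structural observation: for a string $s\in\B^\ell$ with $\ell \ge d$, the node $m|b|s$ depends only on the last $d$ symbols of $b|s$, hence only on $s$ (not on $b$ or $m$) once $\ell \ge d$. So $a(m_+|s) = a(m_-|s)$ for all $s\in\B^\ell$ with $\ell\ge d$, and the difference $\H_k(m_+) - \H_k(m_-)$ collapses to a sum over levels $\ell = 0,1,\dots,d-1$ (the $\ell=0$ term being $a(m_+)-a(m_-)$). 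Each such level contributes at most $2^{-(\ell+1)}\cdot 2^\ell \cdot 2\cdot 2n\|X\| = 2n\|X\|$ in absolute value (there are $2^\ell$ strings, each term bounded by $2n\|X\|$, and we take the difference of two such sums), so summing over $\ell = 0,\dots,d-1$ gives a bound of the form $Cd\|X\|$ with $C$ depending only on $n$.

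For part (iii), I want to compare $\H_k(m)$ with $\tfrac{k}{2^{d+1}}\sum_{\eta\in Q(p)}\langle X\eta,\eta\rangle$, which by \eqref{eq:Qdef} equals $\tfrac{k}{2^{d+1}}\sum_{m'\in\B^d}\langle X\xi(p,m'),\xi(p,m')\rangle = \tfrac{k}{2^d}\sum_{m'\in\B^d} a(m')$. The mechanism is that for $\ell\ge d$, the map $\B^\ell\to\B^d$ sending $s\mapsto$ (the node reached from any fixed $m$ by following $s$) is exactly $2^{\ell-d}$-to-one onto $\B^d$ — this is the standard fact that following a length-$\ell$ walk on the $d$-dimensional de Bruijn graph lands you at the node given by the last $d$ symbols. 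Hence for each $\ell\ge d$, $2^{-(\ell+1)}\sum_{s\in\B^\ell} a(m|s) = 2^{-(\ell+1)}\cdot 2^{\ell-d}\sum_{m'\in\B^d} a(m') = \tfrac{1}{2^{d+1}}\sum_{m'\in\B^d}a(m')$, which is \emph{independent of $\ell$ and of $m$}. Summing (i) over $\ell = d,\dots,k-1$ gives $\tfrac{k-d}{2^{d+1}}\sum_{m'\in\B^d}a(m')$ exactly, and then I need two corrections: adding back the missing range (extending $k-d$ to $k$ costs at most $\tfrac{d}{2^{d+1}}\cdot 2^d\cdot 2n\|X\| = dn\|X\|$), and bounding the low-level terms $a(m) + \sum_{\ell=1}^{d-1}2^{-(\ell+1)}\sum_{s\in\B^\ell}a(m|s)$, which as in (ii) is $O(d\|X\|)$. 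Collecting these gives the claimed $Cd\|X\|$ bound.

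The main obstacle — really the only nonroutine point — is pinning down the de Bruijn graph combinatorics cleanly: namely that a walk of length $\ell\ge d$ from node $m$ following string $s$ lands at a node depending only on the last $d$ letters of $s$, and that the induced map $\B^\ell\to\B^d$ is uniformly $2^{\ell-d}$-to-one. Both follow directly from the definition \eqref{eq:concat1} of $m|b$ as a shift-and-append operation (iterating $\ell$ times with $\ell\ge d$ completely overwrites $m$), so this amounts to a short careful lemma about the concatenation notation rather than any deep graph theory. Everything else is bookkeeping with the geometric weights $2^{-(\ell+1)}$ and the uniform bound $|a(\cdot)|\le 2n\|X\|$.
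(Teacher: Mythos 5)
Your approach is essentially the paper's: a closed form for $\H_k$ by unrolling or induction for (i), and the two combinatorial facts that $m_+|s = m_-|s$ once $|s|\geq d$ (driving (ii)) and that $s\mapsto m|s$ is a uniformly $2^{\ell-d}$-to-one map from $\B^\ell$ onto $\B^d$ for $\ell\geq d$ (driving (iii)). There is, however, one arithmetic slip you should fix before writing this up: you set $a(m):=\tfrac12\langle X\xi(m),\xi(m)\rangle$ but then carry over the coefficients $2^{-(\ell+1)}$ from the paper's displayed formula, which is written in terms of $\langle X\xi,\xi\rangle = 2a(m)$. With your convention for $a$, the level-$\ell$ weight on $\sum_{s\in\B^\ell}a(m|s)$ should be $2^{-\ell}$ — this is exactly what your own remark ``weight $\tfrac12$'' at level $1$ says, whereas $2^{-(\ell+1)}$ would give $\tfrac14$ there. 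The slip is not harmless if carried through (iii) literally: summing the high-level terms with weight $2^{-(\ell+1)}$ yields $\tfrac{k}{2^{d+1}}\sum_{m'\in\B^d}a(m')$, whereas your stated target is $\tfrac{k}{2^d}\sum_{m'\in\B^d}a(m')$, so the discrepancy is of order $k\|X\|$ rather than the needed $d\|X\|$. Replacing $2^{-(\ell+1)}$ by $2^{-\ell}$ (or, equivalently, working directly with $\langle X\xi,\xi\rangle$ and keeping the paper's $2^{-(\ell+1)}$) closes the gap; the splitting into low levels $\ell<d$ and high levels $\ell\geq d$, the two de Bruijn observations, and the bounding constants are all correct and match the paper's proof.
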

\begin{remark}
\label{rem:Hm}
Proposition \ref{prop:S} (i) shows that $\H_k(m)$ is exactly a weighted average of the quantities  $\zeta(m):=\frac{1}{2}\langle X\xi(m),\xi(m)\rangle$ over a de Bruijn tree of depth $k$ rooted at $m$. See Figure \ref{fig:tree} for an illustration. In the figure we replaced $-1$ with $0$ for convenience. The statements (ii) and (iii) follow from the fact that only the first $d$ layers of the tree depend on the root node $m$, and so the root node averages out when $k\gg d$. Furthermore, all contributions to the difference $\H_k(m_+) - \H_k(m_-)$ of depth $d+1$ and higher exactly cancel out.
\end{remark}

\begin{figure}
\begin{center}
\begin{tikzpicture}[yscale=1.2]
\node[left] at (0,0) {101};
\draw[thick,->] (0,0)--(1,1);
\draw[thick,->] (0,0)--(1,-1);
\node[right] at (1,1) {01\red{1}};
\node[right] at (1,-1) {01\red{0}};
\draw[thick,->] (1.7,1)--(2.7,1.5);
\draw[thick,->] (1.7,1)--(2.7,0.5);
\draw[thick,->] (1.7,-1)--(2.7,-1.5);
\draw[thick,->] (1.7,-1)--(2.7,-0.5);
\node[right] at (2.7,1.5) {11\red{1}};
\node[right] at (2.7,0.5) {11\red{0}};
\node[right] at (2.7,-1.5) {10\red{0}};
\node[right] at (2.7,-0.5) {10\red{1}};
\draw[thick,->] (3.4,1.5)--(4.4,1.75);
\draw[thick,->] (3.4,1.5)--(4.4,1.25);
\draw[thick,->] (3.4,0.5)--(4.4,0.75);
\draw[thick,->] (3.4,0.5)--(4.4,0.25);
\draw[thick,->] (3.4,-1.5)--(4.4,-1.75);
\draw[thick,->] (3.4,-1.5)--(4.4,-1.25);
\draw[thick,->] (3.4,-0.5)--(4.4,-0.75);
\draw[thick,->] (3.4,-0.5)--(4.4,-0.25);
\node[right] at (4.4,1.75) {11\red{1}};
\node[right] at (4.4,1.25) {11\red{0}};
\node[right] at (4.4,0.75) {10\red{1}};
\node[right] at (4.4,0.25) {10\red{0}};
\node[right] at (4.4,-1.75) {00\red{0}};
\node[right] at (4.4,-1.25) {00\red{1}};
\node[right] at (4.4,-0.75) {01\red{0}};
\node[right] at (4.4,-0.25) {01\red{1}};
\draw[thick,->] (5.1,1.75)--(6.1,1.75+0.125);
\draw[thick,->] (5.1,1.75)--(6.1,1.75-0.125);
\draw[thick,->] (5.1,1.25)--(6.1,1.25+0.125);
\draw[thick,->] (5.1,1.25)--(6.1,1.25-0.125);
\draw[thick,->] (5.1,0.75)--(6.1,0.75+0.125);
\draw[thick,->] (5.1,0.75)--(6.1,0.75-0.125);
\draw[thick,->] (5.1,0.25)--(6.1,0.25+0.125);
\draw[thick,->] (5.1,0.25)--(6.1,0.25-0.125);
\draw[thick,->] (5.1,-1.75)--(6.1,-1.75+0.125);
\draw[thick,->] (5.1,-1.75)--(6.1,-1.75-0.125);
\draw[thick,->] (5.1,-1.25)--(6.1,-1.25+0.125);
\draw[thick,->] (5.1,-1.25)--(6.1,-1.25-0.125);
\draw[thick,->] (5.1,-0.75)--(6.1,-0.75+0.125);
\draw[thick,->] (5.1,-0.75)--(6.1,-0.75-0.125);
\draw[thick,->] (5.1,-0.25)--(6.1,-0.25+0.125);
\draw[thick,->] (5.1,-0.25)--(6.1,-0.25-0.125);
\node[right] at (6.1,1.75+0.125) {$\cdots$};
\node[right] at (6.1,1.75-0.125) {$\cdots$};
\node[right] at (6.1,1.25+0.125) {$\cdots$};
\node[right] at (6.1,1.25-0.125) {$\cdots$};
\node[right] at (6.1,0.75+0.125) {$\cdots$};
\node[right] at (6.1,0.75-0.125) {$\cdots$};
\node[right] at (6.1,0.25+0.125) {$\cdots$};
\node[right] at (6.1,0.25-0.125) {$\cdots$};
\node[right] at (6.1,-1.75+0.125){$\cdots$};
\node[right] at (6.1,-1.75-0.125){$\cdots$};
\node[right] at (6.1,-1.25+0.125){$\cdots$};
\node[right] at (6.1,-1.25-0.125){$\cdots$};
\node[right] at (6.1,-0.75+0.125){$\cdots$};
\node[right] at (6.1,-0.75-0.125){$\cdots$};
\node[right] at (6.1,-0.25+0.125){$\cdots$};
\node[right] at (6.1,-0.25-0.125){$\cdots$};
\draw[dashed] (-0.35,-0.15)--(-0.35,-3.5);
\node[right] at (-0.4,-3.5) {$\zeta(\text{101})$};
\draw[dashed] (1.35,-1.15)--(1.35,-3.1);
\node[right] at (1.3,-3.1) {$+\frac{1}{2}(\zeta(\text{011}) + \zeta(\text{010}))$};
\draw[dashed] (3.05,-1.65)--(3.05,-2.7);
\node[right] at (3,-2.7) {$+\frac{1}{4}(\zeta(\text{111}) + \zeta(\text{110})+\zeta(\text{101}) + \zeta(\text{100}))$};
\draw[dashed] (4.75,-1.9)--(4.75,-2.3);
\node[right] at (4.6,-2.3) {$ +\frac{1}{8}(\zeta(\text{111}) + ...)$};
\end{tikzpicture}
\end{center}
\caption{Summing over a de Bruijn tree}
\label{fig:tree}
\end{figure}
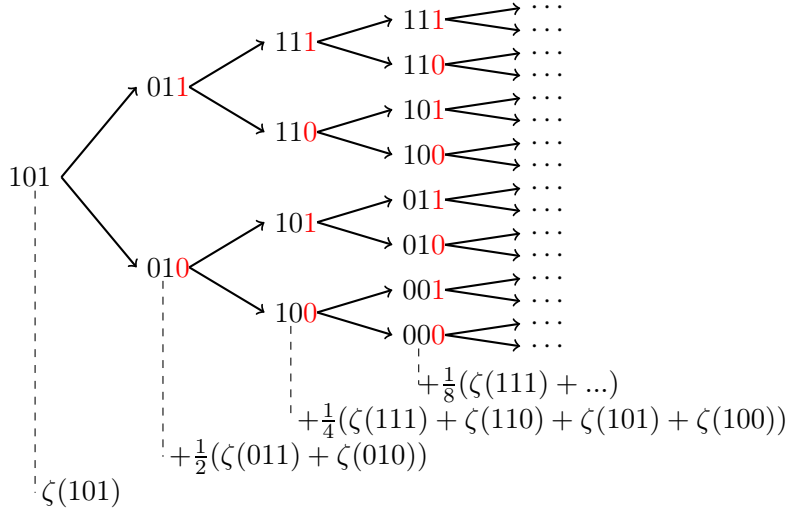

\begin{proof}
We first prove (i). Define $\tilde{\H}_k$ by $\tilde{\H}_1(m) = \frac{1}{2}\langle X\xi(m),\xi(m)\rangle$ and
\[\tilde{\H}_k(m) :=\frac{1}{2}\langle X\xi(m),\xi(m)\rangle+ \sum_{\ell=1}^{k-1} \frac{1}{2^{\ell+1}}\sum_{s\in \B^\ell} \langle X\xi(m|s),\xi(m|s)\rangle.\]
We will show that $\tilde{\H}_k$ satisfies the recursion \eqref{eq:Hm}, and so $\H_k=\tilde{\H}_k$. It is clear that \eqref{eq:Hm} holds for $k=2$, so we may assume $k\geq 3$. Then we compute
\begin{align*}
\frac{1}{2}\left( \tilde{\H}_{k-1}(m_+) + \tilde{\H}_{k-1}(m_-) \right)&=\frac{1}{4}\left(\langle X\xi(m_+),\xi(m_+)\rangle+\langle X\xi(m_-),\xi(m_-)\rangle  \right)\\
&\hspace{4mm}+\frac{1}{2}\sum_{\ell=1}^{k-2} \frac{1}{2^{\ell+1}}\sum_{s\in \B^\ell} \left(\langle X\xi(m_+|s),\xi(m_+|s)\rangle+ \langle X\xi(m_-|s),\xi(m_-|s)\rangle\right)\\
&=\frac{1}{4}\sum_{s\in \B^1}\langle X\xi(m|s),\xi(m|s)\rangle+\sum_{\ell=1}^{k-2} \frac{1}{2^{\ell+2}}\sum_{s\in \B^{\ell+1}} \langle X\xi(m|s),\xi(m|s)\rangle\\
&=\sum_{\ell=1}^{k-1} \frac{1}{2^{\ell+1}} \sum_{s\in \B^{\ell}} \langle X\xi(m|s),\xi(m|s)\rangle\\
&=\tilde{\H}_k(m) - \frac{1}{2}\langle X\xi(m),\xi(m)\rangle,
\end{align*}
which completes the proof of (i).

To prove (ii), we note that $m_+|s = m_-|s$ for $s\in \B^\ell$ with $\ell \geq d$.  Therefore, we have
\begin{align*}
\H_k(m_+) - \H_k(m_-)&=\frac{1}{2}\left(\langle X\xi(m_+),\xi(m_+)\rangle-\langle X\xi(m_-),\xi(m_-)\rangle\right)\\
&\hspace{0.25in}+ \hspace{-4mm}\sum_{\ell=1}^{\min\{k-1,d\}}\hspace{-2mm} \frac{1}{2^{\ell+2}}\sum_{s\in \B^{\ell}}\left( \langle X\xi(m_+|s),\xi(m_+|s)\rangle -\langle X\xi(m_-|s),\xi(m_-|s)\rangle\right).
\end{align*}
Therefore, there exists $C$, depending only on $n$, such that for all $m\in \B^d$ and $k\geq 0$ we have $|\H_k(m_+) - \H_k(m_-)|\leq Cd\|X\|$.

To prove (iii), we note that for $s\in \B^\ell$ with $\ell\geq d$
\[m|s = (s_{\ell-d+1},\dots,s_{\ell})\]
is independent of $m$. Therefore for $k\geq d+1$ we have
\begin{align*}
\H_k(m)&=\frac{1}{2}\langle X\xi(m),\xi(m)\rangle+ \sum_{\ell=1}^{d-1} \frac{1}{2^{\ell+1}}\sum_{s\in \B^\ell} \langle X\xi(m|s),\xi(m|s)\rangle+\sum_{\ell=d}^{k-1} \frac{2^{\ell-d}}{2^{\ell+1}}\sum_{s\in \B^d} \langle X\xi(s),\xi(s)\rangle\\
&= \frac{k-d}{2^{d+1}}\sum_{s\in \B^d} \langle X\xi(s),\xi(s)\rangle + O(d\|X\|)\\
&= \frac{k}{2^{d+1}}\sum_{\eta \in Q(p)} \langle X\eta,\eta\rangle + O(d\|X\|),
\end{align*}
which completes the proof.
\end{proof}

A main technical tool in the proof of Theorem \ref{thm:cell} is the computation of the exact optimality conditions for one step of the min-max problem.
\begin{lemma}\label{lem:opt}
Let $\eps>0$, $S:\B\to \R$, and let $h_1,h_2:[-1,1]\to \R$ be smooth. Consider the min-max problem
\begin{equation}\label{eq:minmax}
M:=\min_{|f|\leq 1}\max_{b=\pm 1}\left\{ bh_1(f) + \eps(S(b) + h_2(f)) \right\}.
\end{equation}
Assume that 
\begin{equation}\label{eq:h11}
h_1(-1) > \frac{\eps}{2}(S(-1)-S(1))) > h_1(1),
\end{equation}
and 
\begin{equation}\label{eq:h12}
h_1'(f) + \eps |h_2'(f)| < 0 \ \ \ \text{ for all }f\in [-1,1].
\end{equation}
Then \eqref{eq:minmax} is minimized by $f^*\in [-1,1]$ satisfying
\begin{equation}\label{eq:f}
h_1(f^*) =\frac{\eps}{2}(S(-1)-S(1))
\end{equation}
and the optimal value of the min-max problem is
\begin{equation}\label{eq:L}
M =\eps h_2(f^*) + \frac{\eps}{2}(S(1) + S(-1)).
\end{equation}
\end{lemma}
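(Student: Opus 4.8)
The plan is to carry out the inner maximization over $b\in\{-1,1\}$ first, reducing the min-max to a one-dimensional minimization. For fixed $f$, comparing the two values $h_1(f)+\eps(S(1)+h_2(f))$ (for $b=1$) and $-h_1(f)+\eps(S(-1)+h_2(f))$ (for $b=-1$) shows that
\[M=\min_{|f|\le 1}\left\{\eps h_2(f)+\Phi(f)\right\},\qquad \Phi(f):=\max\left\{h_1(f)+\eps S(1),\,-h_1(f)+\eps S(-1)\right\},\]
and that the sign of $h_1(f)-\tfrac{\eps}{2}(S(-1)-S(1))$ decides which branch of $\Phi$ is active.

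Next I would extract the structure of $\Phi$. By \eqref{eq:h12} we have $h_1'<0$ on $[-1,1]$, so $h_1$ is strictly decreasing, and \eqref{eq:h11} together with the intermediate value theorem produces a unique $f^*\in(-1,1)$ with $h_1(f^*)=\tfrac{\eps}{2}(S(-1)-S(1))$, which is exactly \eqref{eq:f}. Since $h_1$ is decreasing, $h_1(f)\ge h_1(f^*)$ for $f\le f^*$ and $h_1(f)\le h_1(f^*)$ for $f\ge f^*$, so $\Phi(f)=h_1(f)+\eps S(1)$ on $[-1,f^*]$ and $\Phi(f)=-h_1(f)+\eps S(-1)$ on $[f^*,1]$.

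Finally I would show $g(f):=\eps h_2(f)+\Phi(f)$ is strictly decreasing on $[-1,f^*]$ and strictly increasing on $[f^*,1]$: on the left branch $g'=h_1'+\eps h_2'\le h_1'+\eps|h_2'|<0$, and on the right branch $g'=-h_1'+\eps h_2'\ge -h_1'-\eps|h_2'|>0$, both using \eqref{eq:h12}. Since $g$ is continuous, $f^*$ is the unique minimizer, and evaluating gives $M=g(f^*)=\eps h_2(f^*)+\Phi(f^*)$; substituting $\Phi(f^*)=h_1(f^*)+\eps S(1)=\tfrac{\eps}{2}(S(1)+S(-1))$ from \eqref{eq:f} yields \eqref{eq:L}. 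The computation is routine; the only point requiring care is that $g$ has a kink at $f^*$, so one argues the minimum location via one-sided monotonicity on each closed subinterval plus continuity of $g$, rather than via a first-order condition at $f^*$.
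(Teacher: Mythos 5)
Your proof is correct. It reaches the same conclusion as the paper but organizes the argument differently. The paper first invokes compactness to get an unnamed minimizer $f^*$ and then runs two proofs by contradiction (perturbing $f^*$ slightly to the right or left, using the monotonicity of $M_\pm$) to force the equalizer condition $M_+(f^*)=M_-(f^*)$; from that identity the formulas \eqref{eq:f} and \eqref{eq:L} follow. You instead construct $f^*$ up front via the intermediate value theorem (using strict monotonicity of $h_1$ coming from \eqref{eq:h12}), explicitly identify which branch of the inner max is active on $[-1,f^*]$ versus $[f^*,1]$, and then show the objective $g=\eps h_2+\Phi$ is strictly decreasing on the left piece and strictly increasing on the right piece, so the kink at $f^*$ is the unique global minimum. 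The ingredients are the same (monotonicity of each branch plus the sign comparison from \eqref{eq:h11}), but your constructive version is arguably cleaner: it avoids the contradiction scaffolding, makes the ``V-shape'' of the reduced objective explicit, and yields uniqueness of the minimizer as a byproduct, which the paper's argument does not assert. The paper's perturbation style, on the other hand, would adapt a bit more readily if one didn't have an explicit crossover point. Both are sound.
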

\begin{proof}
Write 
\[M_+(f) = h_1(f) + \eps(S(1) + h_2(f)),\]
and
\[M_-(f) = -h_1(f) + \eps(S(-1) + h_2(f)).\]
Then 
\[M = \min_{|f|\leq 1}\max \left\{ M_+(f),M_-(f) \right\}.\]
Since $M_+$ and $M_-$ are continuous, the minimum is attained at some $f^*\in [-1,1]$. 

We claim that $M_+(f^*) = M_-(f^*)$, from which \eqref{eq:f} and \eqref{eq:L} immediately follow. The proof of the claim is split into two steps.

1. We first show that $M_+(f^*)\leq M_-(f^*)$.  Assume to the contrary that $M_+(f^*) > M_-(f^*)$. We first observe that
\[M_-(1) - M_+(1) = -2h_1(1) - \eps(S(1)-S(-1)) > 0\]
due to \eqref{eq:h11}. Therefore $f^* < 1$. Now, note that
\[M_+'(f) = h'(f) + \eps h_2'(f) < 0\]
for all $f\in [-1,1]$, due to \eqref{eq:h12}. Thus, there exists $\delta>0$, sufficiently small, so that $M_+(f^*+\delta) < M_+(f^*)$ and $M_-(f^*+\delta) < M_+(f^*+\delta)$ (by continuity). It follows that
\[\max\{M_+(f^*+\delta),M_-(f^*+\delta)\} = M_+(f^*+\delta) < M_+(f^*) = \max\{M_+(f^*),M_-(f^*)\},\]
which contradicts the minimality of $f^*$.

2. We now show that $M_+(f^*)\geq M_-(f^*)$.  As before, assume to the contrary that $M_+(f^*) < M_-(f^*)$, and observe that
\[M_+(-1) - M_-(-1) = 2h_1(-1) - \eps(S(-1)-S(1)) > 0,\]
due to \eqref{eq:h11}. Therefore $f^* > -1$. Now, note that
\[M_-'(f) \leq -h_1'(f) + \eps h_2'(f) > 0\]
for all $f \in [-1,1]$, due to \eqref{eq:h12}. Thus, for small $\delta>0$ we have $M_-(f^* - \delta) < M_-(f^*)$ and $M_+(f^*-\delta) < M_-(f^*-\delta)$. It follows that
\[\max\{M_+(f^*-\delta),M_-(f^*-\delta)\} = M_-(f^*-\delta) < M_-(f^*) = \max\{M_+(f^*),M_-(f^*)\},\]
which contradicts the minimality of $f^*$. 
\end{proof}

Finally, we require a technical proposition.
\begin{proposition}\label{prop:thetaq}
Assume \eqref{eq:redund} holds and let $p\in (0,\infty)^n$. Then 
\begin{equation}\label{eq:thetaq}
\langle p,\one\rangle - |\langle p,q(m)\rangle| \geq \cq\gamma_p \ \ \ \text{for all }m\in \B^d,
\end{equation}
where $\gamma_p=\min_{1\leq i \leq n}p_i$ and $\cq>0$ is given in \eqref{eq:thetaq_def}.
\end{proposition}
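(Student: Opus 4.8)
The plan is to reduce the claimed bound to an elementary termwise comparison, organized by splitting on the sign of $\langle p, q(m)\rangle$. Fix $m\in\B^d$. Since $|\langle p,q(m)\rangle|$ equals either $\langle p,q(m)\rangle$ or $-\langle p,q(m)\rangle$, we have
\[
\langle p,\one\rangle - |\langle p,q(m)\rangle| = \min\Big\{\sum_{i=1}^n p_i(1-q_i(m)),\ \sum_{i=1}^n p_i(1+q_i(m))\Big\},
\]
so it suffices to bound each of the two sums from below by $\cq\gamma_p$.

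For the first sum, recall from \eqref{eq:experts} that $q_i(m)\in[-1,1]$, so $1-q_i(m)\ge 0$, and moreover $1-q_i(m)\ge 1-q_i(m)_+\ge 0$ since $q_i(m)\le q_i(m)_+\le 1$. Because $p_i\ge\gamma_p>0$ for every $i$ (here we use $p\in(0,\infty)^n$), we get termwise $p_i(1-q_i(m))\ge\gamma_p(1-q_i(m))\ge\gamma_p(1-q_i(m)_+)$; summing over $i$ and invoking the definition \eqref{eq:thetaq_def} of $\cq$, which gives $\sum_{i=1}^n(1-q_i(m)_+)\ge\cq$, yields $\sum_{i=1}^n p_i(1-q_i(m))\ge\gamma_p\cq$. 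The second sum is handled identically: from $q_i(m)\ge -q_i(m)_-\ge -1$ we get $1+q_i(m)\ge 1-q_i(m)_-\ge 0$, hence $p_i(1+q_i(m))\ge\gamma_p(1-q_i(m)_-)$, and summing with $\sum_{i=1}^n(1-q_i(m)_-)\ge\cq$ gives $\sum_{i=1}^n p_i(1+q_i(m))\ge\gamma_p\cq$. Combining the two bounds proves \eqref{eq:thetaq}.

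There is no genuine obstacle here: the only points needing care are the bookkeeping with $a_+=\max\{a,0\}$ and $a_-=-\min\{a,0\}$ (used to compare $1\mp q_i(m)$ against $1-q_i(m)_\pm$, both of which are nonnegative) and correctly removing the absolute value by casing on the sign of $\langle p,q(m)\rangle$. We note that \eqref{eq:redund} is not actually used in the inequality itself; it enters only via \eqref{eq:thetaq_def}, as it is what guarantees $\cq>0$ and thus makes the bound nontrivial.
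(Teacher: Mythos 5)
Your proof is correct and follows essentially the same route as the paper's: both replace $|\langle p,q(m)\rangle|$ by a minimum of two nonnegative sums, compare $1\mp q_i(m)$ against $1-q_i(m)_\pm$ termwise, pull out $\gamma_p$ using $p_i\ge\gamma_p>0$, and invoke the definition of $\cq$. The only cosmetic difference is that you start from the exact identity $\langle p,\one\rangle - |\langle p,q(m)\rangle|=\min\{\sum p_i(1-q_i(m)),\sum p_i(1+q_i(m))\}$ and then bound, whereas the paper first bounds $|\langle p,q(m)\rangle|$ above by $\max\{\sum p_i q_i(m)_+,\sum p_i q_i(m)_-\}$ and subtracts; the chain of inequalities is otherwise the same.
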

\begin{proof}
Note that
\[-\sum_{i=1}^n p_i q_i(m)_- \leq \langle p,q(m)\rangle \leq \sum_{i=1}^n p_i q_i(m)_+,\]
where $a_+=\max\{a,0\}$ and $a_-=-\min\{a,0\}$. Therefore
\[|\langle p,q(m)\rangle| \leq \max\left\{\sum_{i=1}^n p_i q_i(m)_+,\sum_{i=1}^n p_i q_i(m)_- \right\},\]
and we have
\begin{align*}
\langle p,\one\rangle - |\langle p,q(m)\rangle|&\geq \min\left\{\sum_{i=1}^n p_i (1-q_i(m)_+),\sum_{i=1}^n p_i (1-q_i(m)_-) \right\}\\
&\geq \gamma_p\min\left\{\sum_{i=1}^n (1-q_i(m)_+),\sum_{i=1}^n (1-q_i(m)_-) \right\}= \gamma_p \cq,
\end{align*}
which completes the proof.
\end{proof}

The following lemma shows that the cell problem $\L_{k,\eps}$ is well-approximated by $\H_k$, and essentially completes the proof of Theorem \ref{thm:cell}.
\begin{lemma}\label{lem:cell}
Assume \eqref{eq:redund} holds. Let $X\in \S(n)$, $p\in (0,\infty)^n$, $m\in \B^d$, $k\geq 1$, $\eps>0$, and set $\gamma_p = \min_{1\leq i \leq n}p_i$. Then there exists $C,c>0$, depending only on $n$, such that whenever $\|X\|(k+d)\eps \leq  c\,\cq \gamma_p$ we have
\begin{equation}\label{eq:cell}
|\L_{k,\eps}(X,p,m) -\H_k(X,p,m)| \leq  C\|X\|^2 \gamma_p^{-1}(k+d)k\eps.
\end{equation}
\end{lemma}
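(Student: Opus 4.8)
The plan is to prove Lemma~\ref{lem:cell} by peeling off the $k$ nested $\min$--$\max$ operations one at a time, using Lemma~\ref{lem:opt} to evaluate each single step exactly and tracking the accumulated error by an induction on the number of remaining steps.

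\textbf{A dynamic programming reformulation.} For $0\le r\le k$, $m\in\B^d$ and $\Delta\in\R^n$ I would introduce the ``$r$ steps remaining'' value
\[
G_r(\Delta,m):=\min_{|f_1|\le1}\max_{b_1}\cdots\min_{|f_r|\le1}\max_{b_r}\Bigl\{\eps^{-1}\bigl\langle p,\,\Delta+\textstyle\sum_{i=1}^r b_i\delta_i\bigr\rangle+\tfrac12\bigl\langle X(\Delta+\textstyle\sum_{i=1}^r b_i\delta_i),\,\Delta+\textstyle\sum_{i=1}^r b_i\delta_i\bigr\rangle\Bigr\},
\]
with $m^1=m$, $m^{i+1}=m^i|b_i$ and $\delta_i=q(m^i)-\one f_i$, so that $G_0(\Delta,m)=\eps^{-1}\langle p,\Delta\rangle+\tfrac12\langle X\Delta,\Delta\rangle$, $\L_{k,\eps}(X,p,m)=G_k(0,m)$, and $G_r$ satisfies the one-step recursion $G_r(\Delta,m)=\min_{|f|\le1}\max_{b=\pm1}G_{r-1}(\Delta+b(q(m)-\one f),m|b)$. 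Since unfolding $G_k(0,m)$ only ever evaluates $G_r$ at $|\Delta|\le 2(k-r)\sqrt n\le 2k\sqrt n$, it suffices to work on this range. I would then prove, by induction on $r$, that with $\mu:=C_0\|X\|^2\gamma_p^{-1}(k+d)\eps$ for a suitable $C_0=C_0(n)$,
\[
\bigl|G_r(\Delta,m)-\eps^{-1}\langle p,\Delta\rangle-\tfrac12\langle X\Delta,\Delta\rangle-\H_r(X,p,m)\bigr|\le r\mu,
\]
the case $r=0$ being trivial; taking $r=k$, $\Delta=0$ then gives \eqref{eq:cell} with $C=C_0$.

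\textbf{One step via Lemma~\ref{lem:opt}.} Assuming the claim for $r-1$, I substitute the inductive formula for $G_{r-1}$ into the recursion, expand using $b^2=1$, and pull out the terms not depending on $(f,b)$, obtaining $G_r(\Delta,m)=\eps^{-1}\langle p,\Delta\rangle+\tfrac12\langle X\Delta,\Delta\rangle+\min_{|f|\le1}\max_{b=\pm1}\{b\,h_1(f)+\tfrac12\langle X\delta,\delta\rangle+\H_{r-1}(m|b)+e(b,f)\}$, where $\delta=q(m)-\one f$, $h_1(f)=\eps^{-1}\langle p,\delta\rangle+\langle X\Delta,\delta\rangle$, and $|e(b,f)|\le(r-1)\mu$ uniformly. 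A uniformly bounded perturbation of the payoff changes a $\min$--$\max$ by at most its sup-norm, so $e$ may be discarded at the cost of an extra $(r-1)\mu$. What remains has exactly the form of Lemma~\ref{lem:opt} with this $h_1$, $h_2(f)=\tfrac1{2\eps}\langle X\delta,\delta\rangle$ and $S(b)=\eps^{-1}\H_{r-1}(m|b)$. Its hypotheses \eqref{eq:h11}--\eqref{eq:h12} I would verify uniformly whenever $\|X\|(k+d)\eps\le c\,\cq\gamma_p$ with $c=c(n)$ small: Proposition~\ref{prop:thetaq} gives $h_1(-1)\ge\eps^{-1}\cq\gamma_p-C\|X\|k$ and $h_1(1)\le-\eps^{-1}\cq\gamma_p+C\|X\|k$; Proposition~\ref{prop:S}(ii) gives $\tfrac\eps2|S(-1)-S(1)|=\tfrac12|\H_{r-1}(m_+)-\H_{r-1}(m_-)|\le Cd\|X\|$; and $\langle p,\one\rangle\ge n\gamma_p$ together with $|\Delta|\le 2k\sqrt n$, $|q(m)|\le\sqrt n$ controls $h_1'(f)=-\eps^{-1}\langle p,\one\rangle-\langle X\Delta,\one\rangle$ and $\eps|h_2'(f)|=|\langle X\delta,\one\rangle|$. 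Lemma~\ref{lem:opt} then says the one-step $\min$--$\max$ equals $\tfrac12\langle X\delta^*,\delta^*\rangle+\tfrac12(\H_{r-1}(m_+)+\H_{r-1}(m_-))$, with $\delta^*=q(m)-\one f^*$ and $f^*$ determined by $h_1(f^*)=\tfrac\eps2(S(-1)-S(1))$.

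\textbf{Closing the induction.} Solving the affine equation $h_1(f^*)=\tfrac\eps2(S(-1)-S(1))$ for $f^*$ and comparing with $\langle p,q(m)\rangle/\langle p,\one\rangle$ (the value $f^*$ would take if $X=0$ and $S\equiv0$), a direct estimate with denominator $\eps^{-1}\langle p,\one\rangle+\langle X\Delta,\one\rangle\ge\tfrac12\eps^{-1}\langle p,\one\rangle$ gives $|f^*-\langle p,q(m)\rangle/\langle p,\one\rangle|\le C\gamma_p^{-1}(k+d)\|X\|\eps$. Since $\delta^*-\xi(p,m)=\one(\langle p,q(m)\rangle/\langle p,\one\rangle-f^*)$ and $|\delta^*|,|\xi(p,m)|\le2\sqrt n$, this yields $|\tfrac12\langle X\delta^*,\delta^*\rangle-\tfrac12\langle X\xi(p,m),\xi(p,m)\rangle|\le C\|X\|^2\gamma_p^{-1}(k+d)\eps$; recalling $\H_r(m)=\tfrac12\langle X\xi(p,m),\xi(p,m)\rangle+\tfrac12(\H_{r-1}(m_+)+\H_{r-1}(m_-))$, the one-step $\min$--$\max$ is within that amount of $\H_r(m)$. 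Choosing $C_0$ equal to this constant and adding back the earlier $(r-1)\mu$ gives the bound $r\mu$ for $G_r$, completing the induction; $r=k$ and $\Delta=0$ finish the lemma.

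\textbf{Main obstacle.} The substantive part is the one-step analysis: checking \emph{uniformly} over all reachable $(\Delta,m)$ and all $1\le r\le k$ that $f^*$ is admissible ($|f^*|\le1$) and that the strict monotonicity required by Lemma~\ref{lem:opt} holds --- this is exactly what forces the smallness hypothesis $\|X\|(k+d)\eps\le c\,\cq\gamma_p$ --- together with the quantitative control of $f^*-\langle p,q(m)\rangle/\langle p,\one\rangle$, in which the $O(\|X\|k)$ drift correction and the $O(\|X\|d)$ de Bruijn-tail correction of Proposition~\ref{prop:S}(ii) must be weighed against a denominator of order $\eps^{-1}\langle p,\one\rangle\gtrsim\eps^{-1}\gamma_p$. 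The factor $k+d$ in the per-step error, multiplied by the $k$ steps of the induction, produces the $(k+d)k\eps$ in \eqref{eq:cell}.
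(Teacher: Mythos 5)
Your proposal is correct and takes essentially the same route as the paper: both peel the nested min--max one layer at a time, apply Lemma~\ref{lem:opt} at each step with the same choices of $h_1$, $h_2$, $S$ (up to an overall $\eps^{-1}$ scaling), verify its hypotheses \eqref{eq:h11}--\eqref{eq:h12} via Proposition~\ref{prop:thetaq} and Proposition~\ref{prop:S}(ii), and accumulate an $O(\|X\|^2\gamma_p^{-1}(k+d)\eps)$ error per layer over $k$ layers. Your parametrization by the accumulated displacement $\Delta$ through the function $G_r(\Delta,m)$ is a somewhat cleaner bookkeeping device for what is literally the paper's induction on \eqref{eq:toinduct}, where the displacement is kept as the explicit sum $\sum_i b_i\delta_i$.
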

\begin{proof}
Recall that $\delta_i = q(m^i) -\one f_i$. We claim that for every $\ell=0,\dots,k$ we have
\begin{align}\label{eq:toinduct}
\L_{k,\eps}\hspace{-1mm}&=\hspace{-1mm}\min_{|f_1|\leq 1}\max_{b_1=\pm 1}\cdots \min_{|f_{k-\ell}|\leq 1}\max_{b_{k-\ell}=\pm 1}\hspace{-1mm}\left\{\eps^{-1} \sum_{i=1}^{k-\ell}b_i\langle p,\delta_i\rangle  + \frac12\sum_{i,j=1}^{k-\ell}b_ib_j\langle X\delta_i,\delta_j\rangle + \H_\ell(m^{k-\ell+1})\right\}\\
&\hspace{4in} +O(\|X\|^2 \gamma_p^{-1}(k+d)\ell\eps),\notag
\end{align}
where when $\ell=k$, the statement reduces to
\[\L_{k,\eps} = \H_\ell(m) + O(\|X\|^2 \gamma_p^{-1}(k+d)k\eps),\]
which completes the proof of the theorem. 

We prove \eqref{eq:toinduct} by induction. The base case of $\ell=0$ is given by the definition of the local problem \eqref{eq:Cell}, since $\H_0(m)=0$ for all $m\in \B^d$. For the inductive step, let us assume \eqref{eq:toinduct} is true for some $\ell \in \{0,\dots,k-1\}$. Then we can write
\begin{align}\label{eq:induct}
\L_{k,\eps}&=\min_{|f_1|\leq 1}\max_{b_1=\pm 1}\cdots \min_{|f_{k-\ell-1}|\leq 1}\max_{b_{k-\ell-1}=\pm 1}\left\{\eps^{-1} \sum_{i=1}^{k-\ell-1}b_i\langle p,\delta_i\rangle  + \frac12\sum_{i,j=1}^{k-\ell-1}b_ib_j\langle X\delta_i,\delta_j\rangle + s_\ell\right\}\\
&\hspace{4in} +O(\|X\|^2 \gamma_p^{-1}(k+d)\ell\eps),\notag
\end{align}
where
\begin{equation*}
s_\ell =\eps^{-1}\hspace{-2mm} \min_{|f_{k-\ell}|\leq 1}\max_{b_{k-\ell}=\pm 1}\left\{b_{k-\ell}\langle p,\delta_{k-\ell}\rangle + \eps\sum_{i=1}^{k-\ell-1}b_{k-\ell}b_i\langle X\delta_{k-\ell},\delta_i\rangle + \frac{\eps}{2}\langle X\delta_{k-\ell},\delta_{k-\ell}\rangle + \H_{\ell}(m^{k-\ell+1}) \right\}.
\end{equation*}
Note that if $\ell=k-1$, then there are no min-max terms nor summations in \eqref{eq:induct} and $\L_{k,\eps}=s_{k-1}$. Similarly, there is no summation term in $s_\ell$ when $\ell=k-1$.

Recall that $m^{k-\ell+1}=m^{k-\ell}|b_{k-\ell}$. Hence, we will apply Lemma \ref{lem:opt} with $S(b_{k-\ell}) = \H_{\ell}(m^{k-\ell}|b_{k-\ell})$,
\[h_1(f_{k-\ell}) = \langle p,\delta_{k-\ell}\rangle + \eps\sum_{i=1}^{k-\ell-1} b_i\langle X\delta_{k-\ell},\delta_i\rangle \ \ \text{ and }\ \  h_2(f_{k-\ell})  = \frac{1}{2}\langle X \delta_{k-\ell},\delta_{k-\ell}\rangle.\]
We need to check conditions \eqref{eq:h11} and \eqref{eq:h12} in Lemma \ref{lem:opt}.
We have by Proposition \ref{prop:thetaq} that
\begin{align*}
h_1(1)&=\langle p,q(m^{k-\ell}) - \one\rangle + \eps\sum_{i=1}^{k-\ell-1} b_i\langle X\delta_{k-\ell},\delta_i\rangle\\
&\leq -\cq\gamma_p + \eps\sum_{i=1}^{k-\ell-1} |X\delta_k| |\delta_i|\\
&\leq -\cq\gamma_p + C\|X\|(k-\ell-1)\eps,
\end{align*}
and
\[h_1(-1)=\langle p,q(m^{k-\ell-1}) + \one\rangle + \eps\sum_{i=1}^{k-\ell-1} b_i\langle X\delta_{k-\ell},\delta_i\rangle \geq \gamma_p \cq - C\|X\|(k-\ell-1)\eps.\]
By Proposition \ref{prop:S} (ii) we have
\[|S(1) - S(-1)| = |\H_{\ell}(m^{k-\ell}_+) - \H_{\ell}(m^{k-\ell}_-)| \leq Cd\|X\|.\]
Thus, to ensure that \eqref{eq:h11} holds we require that 
\begin{equation}\label{eq:cond1}
C\|X\|(k-\ell + d-1)\eps\leq \cq\gamma_p.
\end{equation}
 For \eqref{eq:h12}, note that
\[h_1'(f_{k-\ell}) = -\langle p,\one\rangle - \eps\sum_{i=1}^{k-\ell-1}b_i\langle X\one,\delta_i\rangle, \]
and
\[h_2'(f_{k-\ell}) = -\langle X q(m^{k-\ell}),\one\rangle + f_{k-\ell} \langle X\one,\one\rangle. \]
Therefore
\[h_1'(f_k) \leq -n\gamma_p + C\|X\|(k-\ell-1)\eps \ \ \text{ and }\ \ |h_2'(f_k)|\leq C\|X\|.\]
Since $\cq<1$, we find that \eqref{eq:cond1} is also sufficient for \eqref{eq:h12} to hold, and \eqref{eq:cond1} follows from our assumption that $\|X\|(k+d)\eps \leq  c\,\cq \gamma_p$.

Thus, we can apply Lemma \ref{lem:opt} to find that the optimal $f_{k-\ell}^*$ in the definition of $s_\ell$ satisfies 
\[h_1(f_{k-\ell}^*) = \frac{\eps}{2}(S(-1) - S(1)) = \frac{\eps}{2}(\H_{\ell}(m^{k-\ell}_-) - \H_{\ell}(m^{k-\ell}_+)),\]
and
\begin{equation}\label{eq:sl1}
s_\ell = h_2(f_{k-\ell}^*) + \frac{1}{2}(\H_{\ell}(m^{k-\ell}_+) + \H_{\ell}(m^{k-\ell}_-)).
\end{equation}
Therefore
\begin{equation}\label{eq:optimal_strategy}
f_{k-\ell}^* = \frac{\langle p,q(m^{k-\ell})\rangle + \eps\sum_{i=1}^{k-\ell-1}b_i\langle Xq(m^{k-\ell}),\delta_i\rangle + \frac{\eps}{2}(\H_\ell(m^{k-\ell}_+) - \H_\ell(m^{k-\ell}_-))}{\langle p,\one\rangle + \eps\sum_{i=1}^{k-\ell-1}b_i\langle X\one,\delta_i\rangle}.
\end{equation}
To obtain an asymptotic expression for $f_{k-\ell}^*$, note that by Proposition \ref{prop:S} (ii) we have
\[|h_1(f_{k-\ell}^*)|\leq Cd\|X\|\eps\]
and so
\[|\langle p,q(m^{k-\ell})-\one f_{k-\ell}^*\rangle| \leq C\|X\|(k-\ell+d-1)\eps \leq C\|X\|(k+d).\]
It follows that
\[f_{k-\ell}^* = \frac{\langle p,q(m^{k-\ell})\rangle}{\langle p,\one\rangle} + O(\|X\|\gamma_p^{-1}(k+d)\eps),\]
and so
\begin{align*}
h_2(f_{k-\ell}^*) &= \frac{1}{2}\langle X(q(m^{k-\ell})-\one f_{k-\ell}^*),q(m^{k-\ell})-\one f_{k-\ell}^*\rangle\\
&=\frac{1}{2}\langle X\xi(m^{k-\ell}),\xi(m^{k-\ell})\rangle + O(\|X\|^2\gamma_p^{-1}(k+d)\eps).
\end{align*}
By \eqref{eq:Hm} and \eqref{eq:sl1} we have
\[s_\ell = \H_{\ell+1}(m^{k-\ell}) + O(\|X\|^2\gamma_p^{-1}(k+d)\eps).\]
Inserting this into \eqref{eq:induct} completes the proof by induction.
\end{proof}

\begin{proof}[Proof of Theorem \ref{thm:cell}]
The proof simply combines Proposition \ref{prop:S} (iii) and Lemma \ref{lem:cell}.
\end{proof}

\begin{remark} 
If the translation property \eqref{eq:translation} holds then some of the computations in Lemma \ref{lem:cell} can be simplified. Indeed, in this case Proposition \ref{prop:PDEprop} (iii) shows that the solution $u(x,t)$ of the PDE \eqref{eq:PDE} also satisfies the translation property
\[u(x+s\one,t) = u(x,t) + s.\]
It follows that $\langle \nabla u(x,t), \one\rangle  = 1$ and thus $\nabla^2 u(x,t)\one = 0$. Thus, we may restrict attention in the local problem to Hessians $X\in \S(n)$ that satisfy $X\one=0$. Therefore the local problem \eqref{eq:Cell} becomes
\[\L_{k,\eps}=\min_{|f_1|\leq 1}\max_{b_1=\pm 1}\cdots \min_{|f_k|\leq 1}\max_{b_k=\pm 1}\left\{\eps^{-1} \sum_{i=1}^kb_i\langle p,\delta_i\rangle  + \frac12\sum_{i,j=1}^kb_ib_j\langle Xq(m^i),q(m^j)\rangle\right\}.\]
In particular, the optimization over $f_i$ concerns the linear term only and the proof simplifies greatly. In this case, Theorem \ref{thm:cell} simplifies to read
\[\left|\L_{k,\eps}(X,p,m) -\frac{k}{2^{d+1}}\sum_{m\in \B^d} \langle Xq(m),q(m)\rangle\right|\leq Cd\|X\|.\]
\end{remark}

\section{Analysis of the continuum PDE}
\label{sec:pde}

In this section we analyze the continuum PDE \eqref{eq:PDE}. In particular, we show that under relatively few assumptions, the equation enjoys the comparison principle and has a unique viscosity solution. Under additional assumptions on the expert strategies and the payoff, we furthermore show that the viscosity solution is smooth. The proof relies on interpreting \eqref{eq:PDE} as a geometric heat equation. We also establish basic properties of solutions to \eqref{eq:PDE} that will be useful later in the paper.

We write $u\in C^{i,j}(\R^n\times [a,b])$ to mean that $u$ is continuous in $(x,t)$,  $x\mapsto u(x,t)$ is $i$-times continuously differentiable, and $t\mapsto u(x,t)$ is $j$-times continuously differentiable, on the domain $\R^n\times [a,b]$.

\subsection{Viscosity solution theory}
\label{sec:viscosity}

We recall the definition of viscosity solution of the parabolic PDE
\begin{equation}\label{eq:genPDE}
u_t + F(\nabla^2u,\nabla u) = 0 \ \ \text{ in } \ \ \R^n\times (0,1).
\end{equation}
We let $\usc(\O)$ (resp.~$\lsc(\O)$) denote the set of upper (resp.~lower) semicontinuous functions on a subset $\O$ of Euclidean space. We also denote by $u^*$ and $u_*$ the upper and lower semicontinuous envelopes of $u$, respectively.
\begin{definition}\label{def:viscosity}
We say $u\in \usc(\R^n\times [0,1])$ is a viscosity \emph{subsolution} of \eqref{eq:genPDE} if for all $\phi\in C^\infty(\R^n\times \R)$ and $(x,t)\in \R^n\times (0,1)$ such that $u-\phi$ has a local maximum at $(x,t)$ we have
\begin{equation}\label{eq:viscsub}
\phi_t(x,t) + F(\nabla^2\phi(x,t),\nabla \phi(x,t)) \geq 0.
\end{equation}

Similarly, we say $v\in \lsc(\R^n\times [0,1])$ is a viscosity \emph{supersolution} of \eqref{eq:genPDE} if for all $\phi\in C^\infty(\R^n\times \R)$ and $(x,t)\in \R^n\times (0,1)$ such that $u-\phi$ has a local minimum at $(x,t)$ we have
\begin{equation}\label{eq:viscsuper}
\phi_t(x,t) + F(\nabla^2\phi(x,t),\nabla \phi(x,t)) \leq 0.
\end{equation}

We say $u\in C(\R^n\times [0,1])$ is a \emph{viscosity solution} of \eqref{eq:genPDE} if $u$ is both a viscosity sub-~and supersolution.
\end{definition}
We note that the inequalities in \eqref{eq:viscsub} and \eqref{eq:viscsuper} are flipped, compared to standard definitions in \cite{crandall1992user}, due to the fact that \eqref{eq:genPDE} is a \emph{final-time} value problem. Also, we note that sometimes the superjet and subjet definitions are used in place of the test function definition (see \cite{crandall1992user}). The two definitions are equivalent when $F$ is continuous (see, e.g., \cite{calder2018lecture}). 

Since we work on an unbounded domain, we must restrict the class of super and sub-solutions to those with linear growth.
\begin{definition}\label{def:lineargrowth}
We say $u:\R^n\times [0,1]$ has \emph{linear growth} if there exists $C>0$ such that $|u(x,t)|\leq C(1+|x|)$ for all $(x,t)\in \R^n\times [0,1]$.
\end{definition}

We note that our main equation \eqref{eq:PDE} is discontinuous when $\langle p,\one\rangle = 0$, due to the definition of $Q(\nabla u)$, given in \eqref{eq:Qdef}. 
We work with sub-~and supersolutions that are strictly monotone increasing so as to avoid the discontinuity at $\langle p,\one\rangle =0$.
\begin{definition}\label{def:strictinc}
Let $\theta>0$. We say that $u:\R^n\times [0,1]$ is \emph{$\theta$-increasing} if 
\begin{equation}\label{eq:strictinc}
u(x+s\one,t) \geq u(x,t) + \theta s \ \ \ \text{for all }(x,t)\in \R^n\times [0,1], s\geq 0.
\end{equation}
\end{definition}
Under this definition, \eqref{eq:strict_inc} implies that $g$ is $n\cg$-increasing.

Let $\eps>0$ and consider the modified PDE 
\begin{equation}\label{eq:PDEmod}
u_t + \frac{1}{2^{d+1}}\sum_{\eta\in Q_\eps(\nabla u)} \langle \nabla^2 u\,\eta,\eta\rangle = 0 \ \ \ \text{ in }\R^n\times (0,1),
\end{equation}
where $Q_\eps(\nabla u)$ is defined by
\begin{equation}\label{eq:Qeps}
Q_\eps(p)=\left\{q(m) - \frac{\langle p, q(m)\rangle}{\max\{\langle p, \one\rangle,\eps\}}\one \, : \, m\in \B^d\right\}.
\end{equation}
When $u$ is $\theta$-increasing for $\theta\geq \eps$, solutions of \eqref{eq:PDEmod} and \eqref{eq:PDE} are equivalent, as we show below. It is often more useful to work with the modified equation \eqref{eq:PDEmod}, since \eqref{eq:PDEmod} is continuous in both $\nabla u$ and $\nabla^2u$.

We first record a comparison principle for \eqref{eq:PDE} for linear-growth sub-~and supersolutions that are $\theta$-increasing.
\begin{theorem}\label{thm:comparison}
Assume $g$ is uniformly continuous. Let $u\in \usc(\R^n\times [0,1])$ by a viscosity subsolution of \eqref{eq:PDE} and let $v\in \lsc(\R^n\times [0,1])$ be a viscosity supersolution of \eqref{eq:PDE}. Suppose there exists $C,\theta>0$ such that $u$ and $v$ are $\theta$-increasing and $u(x,t) \leq C(1+|x|)$ and $v(x,t) \geq -C(1+|x|)$ for all $(x,t)\in \R^n\times [0,1]$. Then if 
\[u(x,1)\leq g(x) \leq  v(x,1)\]
for all $x\in \R^n$, then $u\leq v$ on $\R^n\times [0,1]$.
\end{theorem}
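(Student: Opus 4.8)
The statement is a comparison principle for a degenerate parabolic final-value problem on an unbounded domain, so I would follow the standard doubling-of-variables strategy adapted to three complications: the final-time (rather than initial-time) nature of the equation, the unbounded domain (which forces a linear-growth penalization), and the discontinuity of the nonlinearity at $\langle p,\one\rangle = 0$ (handled by restricting to $\theta$-increasing functions and passing to the regularized equation \eqref{eq:PDEmod}). The first step is to reduce \eqref{eq:PDE} to \eqref{eq:PDEmod}: since $u$ and $v$ are $\theta$-increasing, I claim any viscosity subsolution (resp.\ supersolution) of \eqref{eq:PDE} is a viscosity subsolution (resp.\ supersolution) of \eqref{eq:PDEmod} with $\eps = \theta$. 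Indeed, if $u-\phi$ has a local max at $(x,t)$, then $\phi$ inherits the $\theta$-increasing property infinitesimally, so $\langle \nabla\phi(x,t),\one\rangle \geq \theta$, hence $\max\{\langle\nabla\phi,\one\rangle,\theta\} = \langle\nabla\phi,\one\rangle$ and $Q_\theta(\nabla\phi) = Q(\nabla\phi)$ at that point. So it suffices to prove comparison for the continuous equation \eqref{eq:PDEmod}, whose nonlinearity $F_\theta(M,p) := \frac{1}{2^{d+1}}\sum_{\eta\in Q_\theta(p)}\langle M\eta,\eta\rangle$ is continuous and degenerate elliptic (monotone nonincreasing in $M$ in the viscosity-solution convention).

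\textbf{Main argument.} With the equation now continuous, I would run the classical Ishii/Crandall--Lions argument. Fix $\mu > 0$ and for $\sigma>0$ replace $u$ by $u^\sigma(x,t) := u(x,t) - \frac{\sigma}{t}$ (or $u - \sigma(1-t)^{-1}$ adapted to the final-time convention — more precisely, since the terminal condition is at $t=1$, use $u^\sigma := u - \frac{\sigma}{1-t}$, which makes $u^\sigma$ a \emph{strict} subsolution and pushes $u^\sigma \to -\infty$ as $t\to 1$, so the max in the doubling cannot escape to $t=1$); then it suffices to show $u^\sigma \leq v$ and let $\sigma\to 0$. Suppose for contradiction that $\sup_{\R^n\times[0,1]}(u^\sigma - v) =: \kappa > 0$. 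Because the domain is unbounded and $u,v$ have only linear growth, I introduce a linear-growth penalization: since $u^\sigma - v$ grows at most linearly but we need the sup to be attained, I double variables with the auxiliary function
\[
\Phi_{\alpha,\beta}(x,y,t,s) := u^\sigma(x,t) - v(y,s) - \frac{\alpha}{2}|x-y|^2 - \frac{\beta}{2}|t-s|^2 - \zeta(x) - \zeta(y),
\]
where $\zeta$ is a smooth function with $\zeta(x)\sim \nu\sqrt{1+|x|^2}$ for small $\nu>0$, chosen so that $\Phi_{\alpha,\beta}$ attains an interior maximum at some $(x_\alpha,y_\alpha,t_\alpha,s_\alpha)$ with $t_\alpha,s_\alpha$ bounded away from $1$ (using the $\frac{\sigma}{1-t}$ barrier) and bounded away from $0$ (using the terminal/initial data — here the terminal data $u(\cdot,1)\leq g\leq v(\cdot,1)$ plays the role: if the max concentrates near $t=1$ then $u^\sigma - v \leq -\sigma/(1-t)\cdot(1+o(1)) \to -\infty$, contradicting $\kappa>0$; if it concentrates near $t=0$ we need a barrier there too). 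Standard estimates give $\alpha|x_\alpha - y_\alpha|^2 \to 0$, $\beta|t_\alpha - s_\alpha|^2\to 0$, and $(x_\alpha,y_\alpha)\to$ a common point as $\alpha,\beta\to\infty$. Apply the parabolic theorem on sums (Crandall--Ishii lemma) to produce matrices $A,B\in\S(n)$ with $A \leq B + O(\frac{1}{\alpha})$ and numbers $a = \beta(t_\alpha - s_\alpha) = b$ such that $(a, \nabla_x(\frac{\alpha}{2}|x-y|^2 + \zeta)(x_\alpha), A)$ lies in the parabolic superjet of $u^\sigma$ and correspondingly for $v$; write the two viscosity inequalities and subtract. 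The gradient terms differ by $O(\nu) + O(\text{error from }\zeta)$, which is where $F_\theta$ being continuous in $p$ matters: the gradients $p_x = \alpha(x_\alpha-y_\alpha) + \nabla\zeta(x_\alpha)$ and $p_y = \alpha(x_\alpha - y_\alpha) - \nabla\zeta(y_\alpha)$ are close (up to $O(\nu)$), and both have $\langle\,\cdot\,,\one\rangle \geq \theta$ provided $u^\sigma, v$ being $\theta$-increasing forces the jet gradients to satisfy this — so $Q_\theta$ is evaluated in the continuous regime. The ellipticity inequality $A \leq B + O(1/\alpha)$ gives $F_\theta(A, p_x) \geq F_\theta(B, p_y) - \omega(\nu, 1/\alpha)$ for a modulus $\omega$, and the strictness coming from the $\sigma/(1-t)$ term yields the contradiction $\sigma\cdot(\text{positive}) \leq \omega(\nu,1/\alpha,1/\beta) \to 0$, after sending $\beta\to\infty$, then $\alpha\to\infty$, then $\nu\to 0$.

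\textbf{Where the difficulty lies.} The routine parts are the Crandall--Ishii lemma mechanics. The genuine obstacles are twofold. First, the \emph{unbounded domain}: one must verify that the linear-growth penalization $\zeta$ can be chosen to both (a) confine the doubled maximum to a compact set so the jets exist and (b) contribute an error that vanishes as $\nu\to0$ — this requires checking that $F_\theta(M, p + \nabla\zeta)$ perturbs $F_\theta(M,p)$ by at most $C\nu\|M\|$, which follows since $\zeta$ has bounded gradient and $Q_\theta$ depends continuously (in fact Lipschitz, away from $\langle p,\one\rangle = 0$) on $p$, but one must be careful that the $\theta$-increasing hypothesis survives the penalization — adding $-\zeta(x)$ does \emph{not} preserve $\theta$-monotonicity, so the correct move is to verify directly that the superjet gradient of $u$ itself (not of $u^\sigma - \zeta$) satisfies $\langle p,\one\rangle\geq\theta$, which is a consequence of $u$ being $\theta$-increasing: if $(p,\tau,A)$ is in the superjet of $u$ at $(x,t)$ then testing the inequality $u(x+r\one,t)\geq u(x,t)+\theta r$ gives $\langle p,\one\rangle\geq\theta$. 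Second, one must confirm that the discontinuity of \eqref{eq:PDE} at $\langle p,\one\rangle = 0$ is genuinely circumvented — this is exactly what the reduction to \eqref{eq:PDEmod} accomplishes, so the only real content is the first-step lemma that $\theta$-increasing sub/supersolutions of \eqref{eq:PDE} solve \eqref{eq:PDEmod}. The rest is bookkeeping of the standard comparison machinery, and I expect the paper carries it out by invoking a general comparison theorem for continuous degenerate parabolic equations with linear-growth solutions.
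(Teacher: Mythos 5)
Your first step — reducing the discontinuous equation \eqref{eq:PDE} to the regularized equation \eqref{eq:PDEmod} by observing that $\theta$-increasing sub/supersolutions have test-function gradients with $\langle\nabla\phi,\one\rangle\geq\theta$, so $Q_\theta(\nabla\phi)=Q(\nabla\phi)$ — is exactly the paper's first step and is correct. After that, the paper and you diverge: the paper extends the sub/supersolution property to $\R^n\times[0,1)$ (citing Evans Section~10.2) and then simply invokes the known comparison principle for continuous degenerate parabolic equations with linear-growth solutions on unbounded domains (Giga--Goto--Ishii--Sato, Theorem~2.1), whereas you set out to reprove that comparison by doubling variables. Reproving it is a legitimate route, and your observations about how the $\theta$-increasing property passes to superjet gradients and why the gradient penalization $\nabla\zeta$ is harmless once $F_\theta$ is continuous are sound.

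However, your execution has a genuine error in the time barrier that breaks the argument. You start by (correctly) writing $u^\sigma=u-\sigma/t$, then ``correct'' yourself to $u^\sigma=u-\sigma/(1-t)$. With the paper's sign convention for the final-value problem (subsolution means $\phi_t+F\geq 0$), subtracting $\psi(t)$ from a subsolution gives $\phi_t+F\geq -\psi'(t)$; this is a \emph{strict} subsolution precisely when $\psi'<0$. For $\psi=\sigma/t$ one has $\psi'=-\sigma/t^2<0$, so $u-\sigma/t$ is a strict subsolution blowing up at $t=0$, which is exactly what you want: the terminal data lives at $t=1$, not at $t=0$, so the barrier must prevent the doubled maximum from escaping to $t=0$, leaving only the interior (use the strict PDE inequality) and $t=1$ (use $u(\cdot,1)\leq g\leq v(\cdot,1)$) as possibilities. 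For $\psi=\sigma/(1-t)$ one has $\psi'>0$, so $u-\sigma/(1-t)$ is not even a subsolution, let alone strict; moreover it pushes the maximum \emph{away} from $t=1$, which is the one place where the hypothesis $u(\cdot,1)\leq v(\cdot,1)$ can be used. You then notice the resulting hole yourself --- ``if it concentrates near $t=0$ we need a barrier there too'' --- but leave it unfilled. Restoring the barrier to $\sigma/t$ simultaneously fixes the strictness, confines the maximum to $\{t=1\}\cup$interior, and removes the need for any second barrier; after that correction, your doubling outline is in order.
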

\begin{proof}
We claim that for $0<\eps \leq \theta$, $u$ is a viscosity subsolution of \eqref{eq:PDEmod} and $v$ is a viscosity supersolution of \eqref{eq:PDEmod}. Indeed, we will show $u$ is a subsolution; the proof that $v$ is a supersolution is similar. Let $\varphi\in C^\infty(\R^n\times \R)$ and $(x_0,t_0)\in \R^n\times (0,1)$ such that $u-\varphi$ has a local maximum at $(x_0,t_0)$. It follows that
\[u(x,t) - \varphi(x,t) \leq u(x_0,t_0) - \varphi(x_0,t_0)\]
for $(x,t)$ near $(x_0,t_0)$. Setting $t=t_0$ and $x=x_0 + s\one$ for sufficiently small $s$, we have
\[\varphi(x_0+s\one,t_0) - \phi(x_0,t_0) \geq u(x_0+s\one,t_0) - u(x_0,t_0) \geq \theta s,\]
since $u$ is $\theta$-increasing. Dividing by $s$ and sending $s\to 0^+$ we have
\[\langle \nabla \varphi(x_0,t_0), \one \rangle\geq \theta.\]
Hence, if $\eps \leq \theta$, we have $Q_\eps(\nabla \varphi(x_0,t_0)) = Q(\nabla \varphi(x_0,t_0))$, which verifies the subsolution condition.

It is a standard argument (see, e.g., \cite[Section 10.2]{Evans}) that $u$ and $v$ are viscosity sub-~and supersolutions of \eqref{eq:PDEmod} on the extended domain $\R^n\times [0,1)$. Since $u$ and $v$ have at most linear growth, we can apply a standard comparison principle from viscosity solution theory (see, e.g., \cite[Theorem 2.1]{giga1991comparison}) to find that $u\leq v$ on $\R^n\times [0,1]$, which completes the proof.
\end{proof}

We can establish existence of a linear growth viscosity solution with the Perron method.
\begin{theorem}\label{thm:existence}
Assume $g$ is uniformly continuous and $\theta$-increasing for $\theta>0$. Then there exists a viscosity solution $u\in C(\R^n\times [0,1])$ of \eqref{eq:PDE} that has linear growth and is $\theta$-increasing. If $v\in C(\R^n\times [0,1])$ is any other viscosity solution of \eqref{eq:PDE} that has linear growth and is $\bar{\theta}$-increasing for any $\bar{\theta}>0$, then $u=v$.
\end{theorem}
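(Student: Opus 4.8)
The plan is to build the solution by Perron's method, with the comparison principle (Theorem~\ref{thm:comparison}) as the essential input, and then to recover the $\theta$-increasing property after the fact by a second comparison argument. Because the Hamiltonian of \eqref{eq:PDE} is discontinuous where $\langle p,\one\rangle=0$, I would not work with \eqref{eq:PDE} directly but with the regularized equation \eqref{eq:PDEmod} for a fixed $\eps\in(0,\theta]$. Its Hamiltonian $F_\eps(X,p)=\frac{1}{2^{d+1}}\sum_{\eta\in Q_\eps(p)}\langle X\eta,\eta\rangle$ is continuous and degenerate elliptic, so the comparison principle for continuous degenerate parabolic equations on unbounded domains (\cite[Theorem~2.1]{giga1991comparison}) applies to its linear-growth sub- and supersolutions with \emph{no} monotonicity hypothesis. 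Moreover, exactly as in the proof of Theorem~\ref{thm:comparison}, a $\theta$-increasing function is a viscosity solution of \eqref{eq:PDEmod} if and only if it is a viscosity solution of \eqref{eq:PDE} (at a test point one has $\langle\nabla\phi,\one\rangle\geq\theta\geq\eps$, so $Q_\eps(\nabla\phi)=Q(\nabla\phi)$). Hence it suffices to produce a $\theta$-increasing, linear-growth viscosity solution of \eqref{eq:PDEmod} with terminal data $g$.

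For the barriers, let $\rho_\delta$ be a standard mollifier with $\supp\rho_\delta\subset B_\delta$ and let $\omega_g$ be a modulus of continuity for $g$; note $g$ has linear growth since $\omega_g$ is subadditive. Then $g\ast\rho_\delta$ is smooth, has linear growth, is $\theta$-increasing (averaging preserves \eqref{eq:strictinc}), and has globally bounded first and second derivatives, with $\|\nabla^2(g\ast\rho_\delta)\|_\infty\leq C\omega_g(\delta)\delta^{-2}$. Since $\langle\nabla(g\ast\rho_\delta),\one\rangle\geq\theta$, every $\eta\in Q(\nabla(g\ast\rho_\delta))$ satisfies $|\eta|\leq R_\delta$, with $R_\delta$ depending only on $n$, $\theta$ and $\|\nabla(g\ast\rho_\delta)\|_\infty$. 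Choosing $C_\delta=\tfrac12\|\nabla^2(g\ast\rho_\delta)\|_\infty R_\delta^2$, the smooth functions
\[\overline u_\delta(x,t)=(g\ast\rho_\delta)(x)+\omega_g(\delta)+C_\delta(1-t),\qquad \underline u_\delta(x,t)=(g\ast\rho_\delta)(x)-\omega_g(\delta)-C_\delta(1-t)\]
are a classical supersolution and a classical subsolution of \eqref{eq:PDEmod}, respectively; both are $\theta$-increasing and of linear growth, $\underline u_\delta\leq\overline u_\delta$, and $\underline u_\delta(\cdot,1)\leq g\leq\overline u_\delta(\cdot,1)$ because $|g\ast\rho_\delta-g|\leq\omega_g(\delta)$.

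Now fix $\delta$ and apply Perron's method: set $u=\sup\{w: w\text{ is a viscosity subsolution of }\eqref{eq:PDEmod},\ \underline u_\delta\leq w\leq\overline u_\delta,\ w(\cdot,1)\leq g\}$, which is nonempty since $\underline u_\delta$ belongs to it. The usual arguments --- the supremum of subsolutions is a subsolution, and $u_*$ is a supersolution via the bump construction, both valid because $F_\eps$ is continuous --- together with the standard barrier argument at $t=1$ built from the continuity of $g$ give $u^*(\cdot,1)\leq g\leq u_*(\cdot,1)$. The comparison principle for \eqref{eq:PDEmod} then forces $u^*\leq u_*$, so $u:=u^*=u_*\in C(\R^n\times[0,1])$ is a linear-growth solution of \eqref{eq:PDEmod} with $u(\cdot,1)=g$. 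To see $u$ is $\theta$-increasing, fix $s>0$: since \eqref{eq:PDEmod} has no $x$- or $u$-dependence, $v(x,t):=u(x+s\one,t)-\theta s$ is again a linear-growth solution, and $v(\cdot,1)=g(\cdot+s\one)-\theta s\geq g=u(\cdot,1)$ because $g$ is $\theta$-increasing; comparison for \eqref{eq:PDEmod} gives $u\leq v$, i.e.\ $u(x+s\one,t)\geq u(x,t)+\theta s$. Thus $u$ is $\theta$-increasing, hence a viscosity solution of \eqref{eq:PDE} with all the asserted properties. For uniqueness, if $v$ is any linear-growth viscosity solution of \eqref{eq:PDE} that is $\bar\theta$-increasing for some $\bar\theta>0$, then $u$ and $v$ are both $\min(\theta,\bar\theta)$-increasing with $u(\cdot,1)=v(\cdot,1)=g$, so Theorem~\ref{thm:comparison} applied in both directions gives $u\leq v$ and $v\leq u$, i.e.\ $u=v$.

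The main obstacle is conceptual rather than computational: the tension between the discontinuity of the Hamiltonian at $\langle p,\one\rangle=0$ and the requirement that the solution be $\theta$-increasing. Resolving it is precisely why one runs Perron on the continuous equation \eqref{eq:PDEmod} --- where comparison holds with no monotonicity assumption, so the Perron class need not be restricted to $\theta$-increasing functions --- and only recovers monotonicity afterwards by comparing $u$ with its $\one$-translates. A secondary point is that for a merely uniformly continuous $g$ the naive barrier $g+C(1-t)$ fails, since its second derivatives are uncontrolled and $F$ need not be bounded along it; the mollification furnishes quantitatively controlled smooth barriers, after which the remaining Perron machinery and the verification of the terminal condition are routine.
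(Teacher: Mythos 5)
Your proof is correct and takes essentially the same route as the paper: run Perron's method on the regularized equation \eqref{eq:PDEmod} (with $\eps\le\theta$ so that the two equations agree on $\theta$-increasing functions), use mollified barriers built from $g*\rho_\delta$ to bound the Perron class, and recover the $\theta$-increasing property a posteriori by comparing $u$ with its $\one$-translates, after which uniqueness drops out of Theorem~\ref{thm:comparison}.

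The one place you diverge is the terminal condition. The paper uses the \emph{whole family} $\{w^\pm_\delta\}_{\delta>0}$ simultaneously: every $w^-_\delta$ lies in $\F$ and every $w^+_\delta$ dominates $\F$ by comparison, so $w^-_\delta\le u\le w^+_\delta$ for all $\delta>0$, and sending $\delta\to0$ pinches $u(\cdot,1)$ to $g$ with no further work. You instead fix a single $\delta$ and sandwich the Perron class between $\underline u_\delta$ and $\overline u_\delta$; that alone only gives $u(\cdot,1)$ within $2\omega_g(\delta)$ of $g$, so you then need the separate terminal-time barrier argument you invoke. That argument does go through (note you must replace a lower barrier $\psi$ with $\max(\psi,\underline u_\delta)$ to respect the $\ge\underline u_\delta$ constraint you imposed on the class, or simply drop that lower constraint, which is harmless), but it is an extra step the paper's let-$\delta\to0$ approach avoids. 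Everything else --- the explicit constants $C_\delta$, $R_\delta$ you give for the barriers, the observation that $F_\eps$ is continuous so \cite[Theorem~2.1]{giga1991comparison} applies without monotonicity, and the translation-comparison to establish that $u$ is $\theta$-increasing --- matches the paper's reasoning.
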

\begin{remark}
From now on, we will refer to the viscosity solution of \eqref{eq:PDE} to mean the unique linear growth $\theta$-increasing viscosity solution.
\label{rem:increasing}
\end{remark}
\begin{proof}
We again work with the modified equation \eqref{eq:PDEmod} with $\eps=\theta$. We will use the Perron method with barrier functions
\[w^\pm_\delta(x,t) := g_\delta(x) \pm K_\delta(1-t) \pm \|g-g_\delta\|_{L^\infty(\R^n)},\]
where $\delta>0$ and $g_\delta:=\eta_\delta * g$, with $\eta_\delta$ a standard mollifier with bandwidth $\delta$. Since $g$ is uniformly continuous, it in fact has linear growth. Thus, the barriers $w^{\pm}_\delta$ are smooth functions with linear growth, and are $\theta$-increasing, since $g$ is $\theta$-increasing. For sufficiently large $K_\delta>0$, $w^+_\delta$ is a classical supersolution of \eqref{eq:PDEmod} and $w^-_\delta$ is a subsolution of \eqref{eq:PDEmod}. We also clearly have
\[w^-_\delta(x,1) \leq g(x) \leq w^+_\delta(x,1) \ \ \ \text{ for all }x\in \R^n.\] 
Since $g$ is uniformly continuous, we have
\[\lim_{\delta\to 0^+}\|g-g_\delta\|_{L^\infty(\R^n)}=0.\]

We now use the Perron method (see, e.g., \cite{crandall1992user} or \cite[Chapter 7]{calder2018lecture}) on the modified equation \eqref{eq:PDEmod}. In particular, we define
\begin{align*}
\F &= \Big\{v \in \usc(\R^n\times [0,1]) \, : \, v \text{ is a linear growth viscosity subsolution of }\eqref{eq:PDEmod}, \\
&\hspace{3in}\text{ and }v(x,1)\leq g(x) \text{ for all }x\in \R^n\Big\},
\end{align*}
and the Perron function $u(x) = \sup\{v(x)\, : \, v\in \F\}$. The set $\F$ is nonempty, since $w^-_\delta\in \F$ for all $\delta>0$. Therefore, it is a standard result that $u^*$ is a viscosity subsolution of \eqref{eq:PDEmod} (see \cite[Lemma 7.1]{calder2018lecture}). By the comparison principle for \eqref{eq:PDEmod}, we have $v \leq w^+_\delta$ for all $\delta>0$. Since $w^-_\delta\in \F$ for all $\delta>0$, we also have $v\geq w^{-}_\delta$.  Therefore $w^{-}_\delta \leq u\leq w^+_\delta$ for all $\delta>0$, and since $w^+_\delta$ is continuous, we have $w^-_\delta \leq u^* \leq w^+_\delta$. In particular, $u^*$ has linear growth and for all $x\in \R^n$
\[u^*(x,1)\leq \lim_{\delta\to 0}w^+_\delta(x,1) = g(x).\]
Therefore $u^*\in \F$ and so $u=u^*$.  It is also a standard result \cite[Lemma 7.2]{calder2018lecture} that $u_*$ is a viscosity supersolution of \eqref{eq:PDEmod}. Since $u\geq w^-_\delta$ we have $u_*\geq w^{-}_\delta$, due to continuity, and so
\[u_*(x,1)\geq \lim_{\delta\to 0}w^-_\delta(x,1) = g(x).\]
Since $w^-_1 \leq u_*\leq u$, we see that $u_*$ has linear growth, and by the comparison principle for \eqref{eq:PDEmod} we have $u\leq u_*$. The opposite inequality is true by definition, and so $u=u^*=u_*$ is the unique linear growth viscosity solution of \eqref{eq:PDEmod} satisfying $u(x,1)=g(x)$.

To see that $u$ is a viscosity solution of \eqref{eq:PDE}, we simply need to show that $u$ is $\theta$-increasing, due to the argument at the start of the proof of Theorem \ref{thm:comparison}. To see this, define $\bar{u}(x,t) = u(x+s\one,t)$. Then $\bar{u}$ is a viscosity solution of \eqref{eq:PDEmod} satisfying $\bar{u}(x,1)=g(x+s\one)$. Since $g$ is $\theta$-increasing we have
\[\bar{u}(x,1) = g(x+s\one) \geq g(x) + \theta s.\]
By the comparison principle for \eqref{eq:PDEmod} we have $\bar{u} \geq u + \theta s$, which establishes that $u$ is $\theta$-increasing.

The uniqueness statement follows from the comparison principle (Theorem \ref{thm:comparison}).
\end{proof}

We now establish some basic properties enjoyed by the solution of \eqref{eq:PDE}.
\begin{proposition}\label{prop:PDEprop}
Assume $g$ is uniformly continuous and $\theta$-increasing. Let $u\in C(\R^n\times [0,1])$ be the viscosity solution of \eqref{eq:PDE}. The following hold.
\begin{enumerate}[(i)]
\item If $g$ is Lipschitz continuous, then for each $t\in [0,1]$ the mapping $x\mapsto u(x,t)$ is Lipschitz continuous. In particular
\[|u(x,t) - u(y,t)| \leq \Lip(g)|x-y|\]
for all $x,y\in \R^n$ and $t\in [0,1]$.
\item If \eqref{eq:strict_inc} holds, then 
\[u(x+v,t) \geq u(x) + \cg \langle v,\one\rangle\]
for all $x\in \R^n, v\in [0,\infty)^n$ and $t\in [0,1]$.
\item If \eqref{eq:translation} holds, then 
\[u(x+s\one,t) = u(x,t) + s\]
for all $s>0$ and $(x,t)\in \R^n\times [0,1]$.
\end{enumerate}
\end{proposition}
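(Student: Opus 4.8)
The plan is to derive all three properties from a single principle: each is a property enjoyed by the terminal data $g$, and each is propagated to the viscosity solution $u$ by the comparison principle of Theorem~\ref{thm:comparison}. The structural observation that makes this work is that \eqref{eq:PDE} is \emph{autonomous}: the nonlinearity depends on $u$ only through $\nabla u$ and $\nabla^2 u$ (the set $Q(p)$ in \eqref{eq:Qdef} involves the fixed predictions $q(m)$, never the point $x$), and there is no zeroth-order term. Hence, if $u$ is a viscosity solution of \eqref{eq:PDE} then so is $(x,t)\mapsto u(x+y,t)+c$ for any $y\in\R^n$ and $c\in\R$, and this transformed function inherits linear growth and the $\theta$-increasing property from $u$. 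These are precisely the hypotheses required to invoke Theorem~\ref{thm:comparison}.

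For (i), fix $y\in\R^n$ and set $v(x,t)=u(x+y,t)+\Lip(g)|y|$. Then $v$ is a linear-growth, $\theta$-increasing viscosity solution of \eqref{eq:PDE}, while $u$ is itself a subsolution with $u(x,1)=g(x)$, and the Lipschitz bound on $g$ gives $v(x,1)=g(x+y)+\Lip(g)|y|\ge g(x)$. Theorem~\ref{thm:comparison} yields $u\le v$, i.e.\ $u(x,t)-u(x+y,t)\le\Lip(g)|y|$ for all $(x,t)$; exchanging the roles of the two points and relabeling gives the stated Lipschitz estimate. For (ii), fix $v\in[0,\infty)^n$ and set $w(x,t)=u(x+v,t)-\cg\langle v,\one\rangle$; then $w$ is again a linear-growth, $\theta$-increasing viscosity solution, and \eqref{eq:strict_inc} gives $w(x,1)=g(x+v)-\cg\langle v,\one\rangle\ge g(x)=u(x,1)$, so the comparison principle forces $w\ge u$, which is the assertion. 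For (iii), fix $s>0$ and set $w(x,t)=u(x+s\one,t)-s$; now the translation property \eqref{eq:translation} gives $w(x,1)=g(x+s\one)-s=g(x)=u(x,1)$, so applying Theorem~\ref{thm:comparison} in both directions (once with $u$ as subsolution and $w$ as supersolution, once with the roles reversed) yields $w\equiv u$ on $\R^n\times[0,1]$.

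The only point that needs to be written out carefully — and the one I would regard as the main (mild) obstacle — is the verification that these transformed functions are genuine viscosity sub/supersolutions of \eqref{eq:PDE} satisfying all the structural hypotheses of Theorem~\ref{thm:comparison}. Invariance of the equation under the spatial shift $x\mapsto x+y$ is immediate from the chain rule applied to test functions, since the nonlinearity carries no explicit $x$-dependence; invariance under addition of a constant $c$ holds because the equation has no zeroth-order term; and both the $\theta$-increasing property and linear growth are visibly preserved by these operations and by adding the nonnegative constants $\Lip(g)|y|$, $\cg\langle v,\one\rangle$, or $s$. Beyond this bookkeeping there is no genuine difficulty: the entire analytic content has been absorbed into the comparison principle of Theorem~\ref{thm:comparison}.
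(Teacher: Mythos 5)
Your proposal is correct and follows essentially the same route as the paper: in all three parts, one forms the shifted-and-translated function $w(x,t)=u(x+v,t)+c$ with the appropriate constant, checks the terminal ordering against $g$, and invokes Theorem~\ref{thm:comparison}. Your additional remarks about why the shift is legitimate (the equation is autonomous and has no zeroth-order term, so translation invariance in $x$ and invariance under additive constants hold, and the $\theta$-increasing and linear-growth hypotheses of Theorem~\ref{thm:comparison} are preserved) are exactly the ``immediate to check'' step the paper leaves implicit.
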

\begin{proof}
To prove (i), let $v\in \R^n$ and define
\[w(x,t) = u(x+v,t) + \Lip(g)|v|.\]
Then it is immediate to check that $w$ is a viscosity solution of \eqref{eq:PDE}  satisfying
\[w(x,1) = u(x+v,1) + \Lip(g)|v| = g(x+v) + \Lip(g)|v| \geq g(x).\]
Therefore, by Theorem \ref{thm:comparison} we have $u \leq w$, and so
\[u(x) -u(x+v,t) \leq \Lip(g)|v|.\]
for all $x,v\in \R^n$ and $t\in [0,1]$. Setting $v=y-x$ completes the proof.

To prove (ii), fix $v\in [0,\infty)^n$ and define $w(x,t) = u(x+v,t)- \cg \langle v,\one\rangle$. Then $w$ is a viscosity solution of \eqref{eq:PDE} satisfying
\[w(x,1) = g(x+v)- \cg \langle v,\one\rangle \geq g(x)\]
due to \eqref{eq:strict_inc}. By Theorem \ref{thm:comparison} we have $w(x,t) \geq u(x,t)$, which completes the proof.

The proof of (iii) is similar. We define $w(x,t) = u(x+s\one,t) - s$ and show that $w$ solves the same equation \eqref{eq:PDE}. By uniqueness $w=u$.
\end{proof}

It turns out that the equation \eqref{eq:PDE} is geometric. That is, the equation is unchanged by a relabeling of its level sets. In fact, the level sets evolve according to a linear heat equation, as we show in Section \ref{sec:classical}.
\begin{lemma}\label{lem:geometric}
Let $u\in \usc(\R^n\times [0,1])$ be a $\theta$-increasing viscosity subsolution of \eqref{eq:PDE}. Let $\Psi:\R\to\R$ be smooth with $\Psi'>0$. Then $w(x,t):=\Psi(u(x,t))$ is a viscosity subsolution of \eqref{eq:PDE}.
\end{lemma}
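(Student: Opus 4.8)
The plan is to exploit the fact that \eqref{eq:PDE} is a \emph{geometric} equation, i.e.\ its Hamiltonian $F(X,p):=\frac{1}{2^{d+1}}\sum_{\eta\in Q(p)}\langle X\eta,\eta\rangle$ is unchanged under the substitutions induced by a relabeling of level sets. The first thing I would do is isolate the two algebraic properties of $F$ that make this work. By \eqref{eq:Qdef}, the set $Q(p)$ is $0$-homogeneous, $Q(\lambda p)=Q(p)$ for every $\lambda>0$; and every $\eta\in Q(p)$ is orthogonal to $p$, since $\langle p,\eta\rangle=\langle p,q(m)\rangle-\frac{\langle p,q(m)\rangle}{\langle p,\one\rangle}\langle p,\one\rangle=0$. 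Consequently $\langle(p\otimes p)\eta,\eta\rangle=\langle p,\eta\rangle^2=0$ for all $\eta\in Q(p)$, whence $F(\lambda X,\lambda p)=\lambda F(X,p)$ for $\lambda>0$ and $F(X+\mu\,p\otimes p,p)=F(X,p)$ for all $\mu\in\R$. These are exactly the identities needed below.

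Next comes the test-function computation. Fix $\varphi\in C^\infty(\R^n\times\R)$ and $(x_0,t_0)\in\R^n\times(0,1)$ with $w-\varphi$ having a local maximum at $(x_0,t_0)$; subtracting a constant from $\varphi$, assume $w(x_0,t_0)=\varphi(x_0,t_0)$. Since $\Psi$ is smooth with $\Psi'>0$ on $\R$, its range is an open interval containing $w(x_0,t_0)=\Psi(u(x_0,t_0))$ on which $\Psi^{-1}$ is smooth; after shrinking the neighborhood so that $\varphi$ stays in that interval, $\psi:=\Psi^{-1}\circ\varphi$ is smooth near $(x_0,t_0)$ (and, by a standard modification outside a small ball, may be taken in $C^\infty(\R^n\times\R)$ with unchanged $2$-jet at $(x_0,t_0)$). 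Near $(x_0,t_0)$ we have $\Psi(u)=w\le\varphi$, and applying the increasing map $\Psi^{-1}$ gives $u\le\psi$ with equality at $(x_0,t_0)$; hence $u-\psi$ has a local maximum at $(x_0,t_0)$, and the subsolution property of $u$ yields
\[
\psi_t(x_0,t_0)+F\big(\nabla^2\psi(x_0,t_0),\nabla\psi(x_0,t_0)\big)\ge 0.
\]

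Finally I would propagate this inequality from $\psi$ to $\varphi$ by the chain rule. Differentiating $\varphi=\Psi(\psi)$ at $(x_0,t_0)$ and writing $a:=\Psi'(u(x_0,t_0))>0$, $b:=\Psi''(u(x_0,t_0))$ (using $\psi(x_0,t_0)=u(x_0,t_0)$), one gets $\nabla\varphi=a\nabla\psi$, $\varphi_t=a\psi_t$, and $\nabla^2\varphi=a\nabla^2\psi+b\,\nabla\psi\otimes\nabla\psi$ at $(x_0,t_0)$. By $0$-homogeneity, $Q(\nabla\psi(x_0,t_0))=Q(a^{-1}\nabla\varphi(x_0,t_0))=Q(\nabla\varphi(x_0,t_0))$; and by orthogonality, for each $\eta$ in this common set $\langle(\nabla\psi\otimes\nabla\psi)\eta,\eta\rangle=a^{-2}\langle\nabla\varphi,\eta\rangle^2=0$, so $a\langle\nabla^2\psi\,\eta,\eta\rangle=\langle\nabla^2\varphi\,\eta,\eta\rangle$. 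Summing, $a\big(\psi_t+F(\nabla^2\psi,\nabla\psi)\big)=\varphi_t+F(\nabla^2\varphi,\nabla\varphi)$ at $(x_0,t_0)$, and since $a>0$ this gives $\varphi_t(x_0,t_0)+F(\nabla^2\varphi(x_0,t_0),\nabla\varphi(x_0,t_0))\ge0$. As $w=\Psi\circ u$ is upper semicontinuous (composition of $u\in\usc(\R^n\times[0,1])$ with a continuous increasing function), $w$ is a viscosity subsolution of \eqref{eq:PDE}.

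I do not expect a serious obstacle here: the argument is bookkeeping once the two identities for $F$ are in hand. The two points that need care are (i) checking that $\psi=\Psi^{-1}\circ\varphi$ is a legitimate smooth test function — the range-of-$\Psi$ issue, handled by localizing — and (ii) verifying that the $\Psi''$-term really drops out, which is precisely the content of the orthogonality $Q(p)\subset p^\perp$ and is the conceptual crux of why \eqref{eq:PDE} is geometric. I would also remark that this computation covers the degenerate case $\langle\nabla\varphi(x_0,t_0),\one\rangle=0$ — which is in any event excluded for $\theta$-increasing $u$ by the argument in the proof of Theorem \ref{thm:comparison} — since there $Q$ is empty and both inequalities reduce to $\varphi_t\ge0$.
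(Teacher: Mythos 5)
Your proof is correct and follows essentially the same route as the paper's: pass to the test function $\psi=\Psi^{-1}\circ\varphi$, apply the viscosity subsolution property of $u$, and push the inequality back through the chain rule using the $0$-homogeneity of $Q$ and the orthogonality $Q(p)\subset p^\perp$ to kill the $\Psi''$ term. The only small additions beyond what the paper writes are your localization to keep $\varphi$ in the range of $\Psi$, and your remark that the degenerate case $\langle\nabla\varphi,\one\rangle=0$ is both excluded by the $\theta$-increasing hypothesis and, in any event, handled automatically since $Q=\varnothing$ on both sides — both observations are accurate.
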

\begin{proof}
Let $\phi\in C^\infty(\R^n\times \R)$ and $(x_0,t_0)\in \R^n\times (0,1)$ such that $w-\phi$ has a local maximum at $(x_0,t_0)$. We may assume $w(x_0,t_0)=\phi(x_0,t_0)$. Then for some $r>0$
\[w(x,t)\leq \phi(x,t) \ \ \ \text{whenever } |x-x_0|\leq r \text{ and }|t-t_0|<r.\]
Define $\psi = \Psi^{-1}(\phi)$. Since $\Psi$ and  $\Psi^{-1}$ are strictly increasing, we have $u(x_0,t_0)=\psi(x_0,t_0)$ and
\[u(x,t)\leq \psi(x,t) \ \ \ \text{whenever } |x-x_0|\leq r \text{ and }|t-t_0|<r.\]
Therefore $u-\psi$ has a local maximum at $(x_0,t_0)$. Since $u$ is $\theta$-increasing, we have
\begin{equation}\label{eq:gradpsi}
\langle \nabla \psi(x_0,t_0),  \one\rangle \geq \theta>0,
\end{equation}
as in the proof of Theorem \ref{thm:comparison}. Thus, by the viscosity subsolution property for $u$ we have
\[\psi_t(x_0,t_0) + \frac{1}{2^{d+1}}\sum_{\eta\in Q(p)} \langle \nabla^2 \psi(x_0,t_0)\,\eta,\eta\rangle \geq 0,\]
where $p=\nabla \psi(x_0,t_0)$. Note we have
\[\varphi_t(x_0,t_0) = \Psi'(u(x_0,t_0))\psi_t(x_0,t_0), \ \ \nabla \varphi(x_0,t_0) = \Psi'(u(x_0,t_0))p,\]
and
\[\nabla^2 \varphi(x_0,t_0) =\Psi'(u(x_0,t_0))\nabla^2 \psi(x_0,t_0) + \Psi''(u(x_0,t_0)) p\otimes p.\] 
Therefore
\[\varphi_t(x_0,t_0) + \frac{1}{2^{d+1}}\sum_{\eta\in Q(p)}\langle \nabla ^2\varphi(x_0,t_0)\eta,\eta\rangle \geq  \frac{\Psi''(u(x_0,t_0))}{2^{d+1}}\sum_{\eta\in Q(p)}|\langle p,\eta\rangle|^2.\]
Since $Q(p)\subset p^\perp$, the right hand side vanishes, so we obtain
\[\varphi_t(x_0,t_0) + \frac{1}{2^{d+1}}\sum_{\eta\in Q(p)}\langle \nabla ^2\varphi(x_0,t_0)\eta,\eta\rangle \geq 0.\]
By \eqref{eq:gradpsi} we have $\langle p,\one\rangle\geq \theta>0$, and so 
\[Q(p) = Q\left( \left( \Psi(u(x_0,t_0)) \right)^{-1}\nabla \varphi(x_0,t_0) \right) = Q(\nabla \varphi(x_0,t_0)),\]
which completes the proof.
\end{proof}

\begin{remark}\label{rem:propsuper}
An analogous statement to Lemma \ref{lem:geometric} holds for supersolutions. That is, if $u\in\lsc(\R^n\times [0,1])$ is a $\theta$-increasing viscosity supersolution of \eqref{eq:PDE} then $w:=\Psi(u)$ is also a viscosity supersolution.
\end{remark}

\subsection{Classical solutions}
\label{sec:classical}

Under some conditions on the payoff $g$ and the expert strategies, the viscosity solution $u$ of \eqref{eq:PDE} has additional regularity and is sometimes a smooth classical solution. This stems from the observation made in Lemma \ref{lem:geometric} that the PDE is geometric. It turns out that, in the right coordinate system, the level sets of the solution $u$ evolve by a linear heat equation that is in some cases uniformly elliptic. 

To see the geometric nature of \eqref{eq:PDE}, we make a change of coordinates as follows: 
\begin{equation}\label{eq:changecoord}
\left\{\begin{aligned}
y_i&=x_i - x_n,&&(1 \leq i \leq n-1)\\
y_n&=x_1 + \cdots + x_n.
\end{aligned}\right.
\end{equation}
That is, we define the matrix
\begin{equation}\label{eq:A}
R = 
\begin{bmatrix}
1&0&0&\cdots &0&-1\\
0&1&0&\cdots &0&-1\\
0&0&1&\cdots &0&-1\\
\vdots&\vdots&\vdots&\ddots&\vdots&\vdots\\
0&0&0&\cdots &1&-1\\
1&1&1&\cdots&1&1\\
\end{bmatrix},
\end{equation}
and make the change of variables $y = Rx$. The inverse coordinate transformation is easily obtained as
\[\left\{\begin{aligned}
x_i&=y_i+\frac{1}{n}y_n - \frac{1}{n}\sum_{i=1}^{n-1} y_i,&&(1 \leq i \leq n-1)\\
x_n&=\frac{1}{n}y_n - \frac{1}{n}\sum_{i=1}^{n-1} y_i.
\end{aligned}\right.\]
In these new coordinates, we now decompose the payoff $g$ into its level-surfaces.
\begin{proposition}\label{prop:levelsets}
Assume $g$ is Lipschitz continuous and $\theta$-increasing. Define $\bar{g}(y) = g(R^{-1}y)$. Then there exists a Lipschitz continuous function $h^0: \R^n \to \R$ such that
\begin{equation}\label{eq:glevel}
\bar{g}(y_1,y_2,\dots,y_{n-1},h^0(y_1,\dots,y_{n-1},s))=s
\end{equation}
holds for all $y\in \R^{n-1}$ and $s\in \R$. Furthermore, the following hold:
\begin{enumerate}[(i)]
\item For all $(y,s)\in \R^{n-1}\times \R$ we have  
\[\sqrt{n}\Lip(g)^{-1} \leq h^0_s(y,s)  \leq n\theta^{-1},\]
\item If \eqref{eq:translation} holds then for all $y\in \R^n$ and $s\in \R$
\[h^0(y_1,\dots,y_{n-1},s) = y_n - n\bar{g}(y) + ns,\]
\item If $g\in C^k(\R^n)$ then $h^0\in C^k(\R^n)$, and $[h^0]_{C^k(\R^n)}$ depends only on $[g]_{C^k(\R^n)}$ and $\theta$.
\end{enumerate}
\end{proposition}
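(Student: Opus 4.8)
The plan is to read Proposition~\ref{prop:levelsets} as an implicit function theorem statement, the key point being a single geometric feature of the coordinate change \eqref{eq:changecoord}: inverting \eqref{eq:changecoord} one finds $\partial x_i/\partial y_n = \tfrac1n$ for every $i$, i.e. $R^{-1}e_n = \tfrac1n\one$, so moving only the last $y$-coordinate moves $x$ along the diagonal $\one$, which is exactly the direction controlled by the $\theta$-increasing hypothesis. First I would fix $y'=(y_1,\dots,y_{n-1})$ and study $z\mapsto\bar g(y',z)$. Writing $\bar g(y',z+\Delta)-\bar g(y',z) = g(R^{-1}(y',z)+\tfrac{\Delta}{n}\one)-g(R^{-1}(y',z))$ and using Definition~\ref{def:strictinc} gives $\bar g(y',z+\Delta)-\bar g(y',z)\ge\tfrac\theta n\Delta$ for $\Delta\ge 0$. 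Hence $z\mapsto\bar g(y',z)$ is continuous, strictly increasing, and unbounded in both directions, so it is a bijection of $\R$; defining $h^0(y',s)$ to be its inverse gives \eqref{eq:glevel}.

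Next I would extract the quantitative bounds. For the dependence on $s$: if $s_1<s_2$ and $z_i=h^0(y',s_i)$ then $z_1<z_2$, and the inequality above reads $s_2-s_1\ge\tfrac\theta n(z_2-z_1)$, so $0\le h^0(y',s_2)-h^0(y',s_1)\le\tfrac n\theta(s_2-s_1)$; since $R^{-1}(y',z_2)-R^{-1}(y',z_1)=\tfrac{z_2-z_1}{n}\one$ has norm $\tfrac{z_2-z_1}{\sqrt n}$, Lipschitz continuity of $g$ gives $s_2-s_1\le\Lip(g)\tfrac{z_2-z_1}{\sqrt n}$, i.e. $h^0(y',s_2)-h^0(y',s_1)\ge\sqrt n\,\Lip(g)^{-1}(s_2-s_1)$. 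This is the difference-quotient form of (i); once $g\in C^1$ (hence $h^0$ is differentiable, cf.\ (iii), and a.e.\ by Rademacher in general) implicit differentiation of \eqref{eq:glevel} gives $h^0_s=(\bar g_z)^{-1}=n\langle\nabla g,\one\rangle^{-1}$, and $\langle\nabla g,\one\rangle\ge\theta$ together with $\langle\nabla g,\one\rangle\le\sqrt n\,\Lip(g)$ (Cauchy--Schwarz) gives (i) exactly. For the dependence on $y'$: with $z_i=h^0(y'_i,s)$ and $z_1\le z_2$, splitting $0=[\bar g(y'_2,z_2)-\bar g(y'_2,z_1)]+[\bar g(y'_2,z_1)-\bar g(y'_1,z_1)]$ and bounding the first bracket below by $\tfrac\theta n(z_2-z_1)$ and the second by $-\Lip(\bar g)|y'_1-y'_2|$ gives $|z_1-z_2|\le\tfrac{n\Lip(\bar g)}{\theta}|y'_1-y'_2|$, with $\Lip(\bar g)\le\Lip(g)\|R^{-1}\|$; combined with the $s$-estimate, $h^0$ is jointly Lipschitz on $\R^n$.

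For (ii), the translation $x\mapsto x+t\one$ corresponds under $y=Rx$ to $y\mapsto y+nt\,e_n$ (since $y_i=x_i-x_n$ is unchanged and $y_n=\sum_i x_i$ increases by $nt$), so \eqref{eq:translation} gives $\bar g(y+nt\,e_n)=\bar g(y)+t$, whence $\bar g(y',z)=\bar g(y',0)+\tfrac zn$; solving $\bar g(y',0)+\tfrac zn=s$ gives $z=ns-n\bar g(y',0)=ns-n\bar g(y)+y_n$. For (iii), $g\in C^k$ implies $\bar g\in C^k$ with $\bar g_z=\tfrac1n\langle\nabla g,\one\rangle\ge\tfrac\theta n>0$ everywhere, so the implicit function theorem applies at every point, and by uniqueness of the level $z$ the local $C^k$ solutions glue to $h^0$, giving $h^0\in C^k(\R^n)$. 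Differentiating \eqref{eq:glevel} repeatedly and solving for the top-order derivative of $h^0$ --- which appears linearly with coefficient $\bar g_z$, the remaining terms being polynomial in lower-order derivatives of $h^0$ and derivatives of $\bar g$ of order at most $k$ --- yields, by induction on the differentiation order, a bound on $[h^0]_{C^k(\R^n)}$ depending only on $n$, $\theta$, and $[g]_{C^j(\R^n)}$ for $1\le j\le k$.

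The argument is elementary and I expect no serious obstacle; the only points needing care are the bookkeeping in (iii), where the inductive bound on $[h^0]_{C^k(\R^n)}$ should be made explicit rather than merely attributed to Fa\`a di Bruno, and the observation that for merely Lipschitz $g$ part (i) must be interpreted as the difference-quotient inequality (equivalently, an a.e.\ bound on $h^0_s$), since $h^0_s$ need not exist at every point.
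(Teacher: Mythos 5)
Your proof is correct and takes essentially the same route as the paper: both rest on the observation that $R^{-1}e_n=\tfrac1n\one$, so the $\theta$-increasing hypothesis forces $z\mapsto\bar g(y',z)$ to be strictly increasing with slope at least $\tfrac{\theta}{n}$, and the implicit function theorem then yields $h^0$, parts (i)--(iii) following by (implicit) differentiation of \eqref{eq:glevel}. Your write-up is somewhat more careful on the technical side---explicitly establishing surjectivity of $z\mapsto\bar g(y',z)$, phrasing (i) in difference-quotient form for merely Lipschitz $g$, and spelling out the joint Lipschitz estimate and the induction in (iii)---whereas the paper differentiates throughout and leaves those points implicit, but there is no conceptual difference.
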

\begin{remark}
The function $y\mapsto h^0(y,s)$ is a parametrization of the level set $\{\bar{g}=s\}$ in the form $y_n = h^0(y,s)$.
\label{rem:levelset}
\end{remark}
\begin{proof}
The proof follows from the implicit function theorem. Notice that \eqref{eq:strict_inc} implies
\begin{equation}\label{eq:gyn}
\bar{g}_{y_n}(y) = \frac{1}{n}\langle \nabla g(R^{-1}y),\one \rangle \geq \frac{1}{n}\theta>0.
\end{equation}
We also have $\bar{g}_{y_n}(y) \leq \frac{1}{\sqrt{n}}\Lip(g)$. It follows that for every $s\in \R$ and $y\in \R^{n-1}$, there is a unique $h^0\in \R$ such that
\[\bar{g}(y_1,y_2,\dots,y_{n-1},h^0)= s. \]
This defines the function $h^0=h^0(y,s)$. Due to \eqref{eq:gyn} the implicit function theorem guarantees that $h^0$ is Lipschitz continuous on $\R^n$.  This establishes the existence of $h^0$.

To prove (i), we differentiate \eqref{eq:glevel} in $s$ to find
\[\bar{g}_{y_n}(y,h^0(y,s)) h^0_s(y,s) = 1,\]
and apply the bounds $\frac{1}{n}\theta \leq \bar{g}_{y_n} \leq \frac{1}{\sqrt{n}}\Lip(g)$ proved above. 

To prove (ii), we note that $h^0$ satisfies
\begin{equation}\label{eq:glevel2}
h^0(y_1,\dots,y_{n-1},\bar{g}(y)) = y_n
\end{equation}
for all $y\in \R^n$, $s\in \R$.  Since \eqref{eq:translation}  holds we have $\langle \nabla g, \one \rangle = 1$, and thus $\bar{g}_{y_n} = \frac{1}{n}$ and $h_s^0=n$. Combining this with \eqref{eq:glevel2} yields
\[y_n = h^0(y_1,\dots,y_{n-1},\bar{g}(y)) = h^0(y_1,\dots,y_{n-1},0) + n\bar{g}(y),\]
Therefore
\[h^0(y_1,\dots,y_{n-1},0) = y_n - n\bar{g}(y).\]
Since $h^0(y,s) = h^0(y,0) + ns$, the claim follows.

The proof of (iii) follows from the implicit function theorem.
\end{proof}

Our first regularity result shows that the level sets $\{u(x,t)=s\}$ evolve by a linear heat equation. When the translation property \eqref{eq:translation} and \eqref{eq:span} hold, this yields a representation formula for the solution of \eqref{eq:PDE}, and we can use the parabolic smoothing from this interpretation to show that $u\in C^\infty$. 
\begin{theorem}\label{thm:reg2}
Assume \eqref{eq:span} and \eqref{eq:translation} hold, and let $g$ be Lipschitz continuous and $\theta$-increasing. Then the viscosity solution $u$ of \eqref{eq:PDE} is given by
\begin{equation}\label{eq:uexplicit}
u(x,t) = h(x_1-x_n,\dots,x_{n-1}-x_n,t)+\frac{1}{n}(x_1+\cdots+x_n),
\end{equation}
where $h\in C^\infty(\R^n\times [0,1))$ is the solution of the heat equation
\begin{equation}\label{eq:hpde2}
\left\{\begin{aligned}
h_{t} + \frac{1}{2^{d+1}}\sum_{m\in \B^d}\langle \nabla^2h r(m),r(m)\rangle &= 0,&&\text{in } \R^{n-1}\times (0,1)\\ 
h(y,1) &=\bar{g}(y,0),&&\text{for } y\in \R^{n-1},
\end{aligned}\right.
\end{equation}
and $\bar{g}(y)=g(R^{-1}y)$. In particular, $u\in C^\infty(\R^n\times [0,1))$ and 
\begin{equation}\label{eq:derivatives}
\left\{\begin{aligned}
|u_{\xi\xi}(x,t)| &\leq \frac{C\Lip(g)}{\sqrt{(1-t)\Cr}}, \ \ &|u_{\xi\xi\xi}(x,t)| \leq \frac{C\Lip(g)}{(1-t)\Cr},\\
|u_t(x,t)| &\leq \frac{C\Lip(g)}{\sqrt{1-t}}, \ \ \text{ and }\ \ &|u_{tt}(x,t)| \leq \frac{C\Lip(g)}{(1-t)^{3/2}},
\end{aligned}\right.
\end{equation}
hold for all $\xi\in \R^n$ with $|\xi|=1$ and all $(x,t)\in \R^n\times [0,1)$.
\end{theorem}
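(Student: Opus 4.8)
The plan is to exhibit the function $u$ on the right-hand side of \eqref{eq:uexplicit} explicitly, check that it is the unique linear-growth $\theta$-increasing viscosity solution of \eqref{eq:PDE} guaranteed by Theorem~\ref{thm:existence}, and then read off \eqref{eq:derivatives} from the Gaussian representation of the solution of \eqref{eq:hpde2}. I first record the linear algebra. Let $P$ be the $(n-1)\times n$ matrix with $(Px)_i = x_i-x_n$ for $1\le i\le n-1$, i.e.\ the top $n-1$ rows of $R$ in \eqref{eq:A}. One checks directly that $P\one=0$, that $Pq(m)=r(m)$ for every $m$ (this is exactly \eqref{eq:rdef}), and that $PP^T=I_{n-1}+\one\one^T$, so $\|P\|^2=n$. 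Set $A:=\frac{1}{2^{d+1}}\sum_{m\in\B^d}r(m)\otimes r(m)$; then \eqref{eq:span} says $A\ge\Cr I$, while $|r(m)|\le 2\sqrt{n-1}$ gives $\|A\|\le\operatorname{tr}(A)\le 2(n-1)$, so $A$ is uniformly elliptic with constants depending only on $n$ and $\Cr$. Since $\operatorname{tr}(A\nabla^2 h)=\frac{1}{2^{d+1}}\sum_m\langle\nabla^2h\,r(m),r(m)\rangle$, \eqref{eq:hpde2} is the constant-coefficient backward heat equation $h_t+\operatorname{tr}(A\nabla^2 h)=0$ with Lipschitz, linear-growth data $\bar g(\cdot,0)$ (here $\bar g=g\circ R^{-1}$ is Lipschitz with constant $\le C_n\Lip(g)$). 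Writing $G_{A,\sigma}$ for the density of a centered Gaussian with covariance $2\sigma A$, standard heat-semigroup facts give that $h(\cdot,t):=G_{A,1-t}*\bar g(\cdot,0)$ is the unique linear-growth solution, that $h\in C^\infty(\R^{n-1}\times[0,1))$, and that $h(\cdot,t)\to\bar g(\cdot,0)$ uniformly on compacta as $t\to1^-$.

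Next I verify that $u(x,t):=h(Px,t)+\tfrac1n\langle x,\one\rangle$ solves \eqref{eq:PDE}. Because $P\one=0$, $u$ has the translation property $u(x+s\one,t)=u(x,t)+s$, hence linear growth and $\theta$-increasing (note \eqref{eq:translation} forces $\theta\le1$); and $u(x,1)=\bar g(Px,0)+\tfrac1n\langle x,\one\rangle=\bar g(Rx)=g(x)$, using that \eqref{eq:translation} implies $\bar g(y',y_n)=\bar g(y',0)+\tfrac1n y_n$ (as in the proof of Proposition~\ref{prop:levelsets}). Since $\langle\nabla u,\one\rangle=\langle\nabla h,P\one\rangle+1=1>0$, the set $Q(\nabla u)$ of \eqref{eq:Qdef} is well defined, and $\nabla^2u=P^T(\nabla^2h)P$ satisfies $\nabla^2u\,\one=0$; consequently, for each $m$ the vector $\eta=q(m)-\langle\nabla u,q(m)\rangle\one\in Q(\nabla u)$ obeys $\langle\nabla^2u\,\eta,\eta\rangle=\langle\nabla^2u\,q(m),q(m)\rangle=\langle\nabla^2h\,r(m),r(m)\rangle$. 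Summing over $m$ and using $u_t=h_t$ shows $u$ solves \eqref{eq:PDE} classically on $\R^n\times(0,1)$; being continuous up to $t=1$ with $u(\cdot,1)=g$, it is a viscosity solution, and Theorem~\ref{thm:existence} then yields \eqref{eq:uexplicit} and $u\in C^\infty(\R^n\times[0,1))$.

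For the bounds \eqref{eq:derivatives}, $\nabla^2u=P^T(\nabla^2h)P$ and $\|P\|^2=n$ give $|u_{\xi\xi}|\le n\|\nabla^2h\|$, $|u_{\xi\xi\xi}|\le n^{3/2}\|\nabla^3h\|$, while $u_t=h_t$ and $u_{tt}=h_{tt}$. All $h$-estimates come from the Gaussian representation together with the semigroup property used to split the time interval so that, in each estimate, one spatial derivative falls on the kernel and the remaining differentiations land on $\nabla\bar g(\cdot,0)$, which is bounded by $C_n\Lip(g)$. One spatial derivative of the kernel costs $\|\nabla G_{A,\sigma}\|_{L^1}\le C_n(\sigma\Cr)^{-1/2}$ — crucially this uses $\operatorname{tr}(A^{-1})\le(n-1)\Cr^{-1}$, not $\|A^{-1}\|$ — giving $\|\nabla^2h(\cdot,t)\|\le C_n\Lip(g)((1-t)\Cr)^{-1/2}$; two spatial derivatives cost $\|\nabla^2G_{A,\sigma}\|_{L^1}\le C_n(\sigma\Cr)^{-1}$, giving the stated bound on $\|\nabla^3h\|$. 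For the time derivatives the point is the cancellation $A\nabla G_{A,\sigma}(w)=-\tfrac{w}{2\sigma}G_{A,\sigma}(w)$ (the factor $A$ cancels $A^{-1}$ from the exponent): writing $h_t=-\operatorname{div}(A\nabla h)$ and integrating by parts once, $|h_t|\le\|\nabla\bar g(\cdot,0)\|_\infty\|A\nabla G_{A,1-t}\|_{L^1}$, and $\|A\nabla G_{A,\sigma}\|_{L^1}=(2\sigma)^{-1}\mathbb{E}|Y|\le(2\sigma)^{-1}\sqrt{2\sigma\operatorname{tr}(A)}\le C_n\sigma^{-1/2}$ with $Y$ centered Gaussian of covariance $2\sigma A$, so $|u_t|\le C_n\Lip(g)(1-t)^{-1/2}$ with \emph{no} dependence on $\Cr$; similarly $h_{tt}=-\operatorname{div}(A\nabla h_t)$ together with $h_t(\cdot,t)=G_{A,(1-t)/2}*h_t(\cdot,(1+t)/2)$ and $\|\operatorname{div}(A\nabla G_{A,\sigma})\|_{L^1}=\|\partial_\sigma G_{A,\sigma}\|_{L^1}\le C_n\sigma^{-1}$ give $|u_{tt}|\le C_n\Lip(g)(1-t)^{-3/2}$.

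The main obstacle is the bookkeeping in this last step: obtaining the sharp power of $\Cr$ in each bound (namely $\Cr^{-1/2}$ for $u_{\xi\xi}$, $\Cr^{-1}$ for $u_{\xi\xi\xi}$, and $\Cr^{0}$ for $u_t,u_{tt}$) requires working directly with $G_{A,\sigma}$ rather than normalizing $A$ to the identity — the latter would spoil the $\Cr$-free time bounds — and carefully separating the differentiations that hit the Gaussian (each costing $(\sigma\Cr)^{-1/2}$, or $\sigma^{-1}$ with no $\Cr$ in the special combination $\operatorname{div}(A\nabla\cdot)$) from those absorbed by the Lipschitz data (costing nothing). Everything else — the identities $P\one=0$, $Pq(m)=r(m)$, $\nabla^2u\,\one=0$, and the resulting equivalence of \eqref{eq:PDE} with \eqref{eq:hpde2} — is routine.
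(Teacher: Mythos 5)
Your proposal is correct and follows the same overall route as the paper: construct $h$ via the Gaussian kernel for the constant-coefficient backward heat equation \eqref{eq:hpde2}, exhibit $u$ by formula \eqref{eq:uexplicit}, verify by direct computation that $\langle\nabla u,\one\rangle=1$ and $\nabla^2 u\,\one=0$ so that the nonlinear equation collapses to the linear one, check the terminal condition using $\bar g_{y_n}=\tfrac1n$, and then invoke uniqueness from Theorem~\ref{thm:existence}. The identities you record (namely $P\one=0$, $Pq(m)=r(m)$, $\nabla^2u=P^T\nabla^2 h\,P$) are exactly the content of the paper's coordinate computation, just phrased via the top block $P$ of $R$ rather than through $R$ itself.

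Where your write-up genuinely adds something is in the derivative estimates \eqref{eq:derivatives}. The paper's proof simply states ``We can differentiate \eqref{eq:HeatForm2} to obtain the following estimates'' and then asserts \eqref{eq:estimates} without tracking the $\Cr$-dependence, which is in fact the delicate part: one must get $\Cr^{-1/2}$ for $u_{\xi\xi}$, $\Cr^{-1}$ for $u_{\xi\xi\xi}$, and, crucially, \emph{no} $\Cr$ in the bounds on $u_t$ and $u_{tt}$. Your observations --- that $\|\nabla G_{A,\sigma}\|_{L^1}$ is controlled via $\operatorname{tr}(A^{-1})\leq (n-1)\Cr^{-1}$ rather than $\|A^{-1}\|$ (which would overshoot by a factor $\Cr^{-1/2}$), and that the time bounds exploit the cancellation $A\nabla G_{A,\sigma}(w)=-\tfrac{w}{2\sigma}G_{A,\sigma}(w)$ so that $\operatorname{tr}(A\nabla^2 G)=\partial_\sigma G$ is $\Cr$-free in $L^1$ --- are exactly what is needed to justify the paper's claimed estimates, and you are right that the short-cut of normalizing $A$ to the identity (which tempts one to bound via $\|S^{-1}\|^2\|S\|$) would degrade the $\Cr$-exponent when $A$ is anisotropic. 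A small remark: the paper's displayed kernel \eqref{eq:HeatKernel} has $(4\pi t)^{n/2}$ where it should read $(4\pi t)^{(n-1)/2}$ since the convolution lives on $\R^{n-1}$; your normalization is the correct one.
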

In the theorem statement, we use the notation $u_{\xi\xi}=\langle \nabla^2 u \xi,\xi\rangle$ and $u_{\xi\xi\xi} = \sum_{i,j,k=1}^{n} u_{x_ix_jx_k}\xi_i\xi_j\xi_k$.
We also recall that $r(m)$ is defined in \eqref{eq:rdef}.
\begin{proof}
Let $A$ be defined as follows:
\begin{equation}\label{eq:Acoeff}
A = \frac{1}{2^{d+1}}\sum_{m\in \B^d}r(m)\otimes r(m).
\end{equation}
By \eqref{eq:span} we have $A\geq \Cr I$, so \eqref{eq:hpde2} is uniformly elliptic, and $h\in C^\infty(\R^n\times [0,1))$. We note that \eqref{eq:hpde2} is a nondivergence form equation, which can be written as
\[h_t + \text{Tr}(A\nabla ^2 h)  = 0.\]
Thus, $h$ is given by the solution formula
\begin{equation}\label{eq:HeatForm2}
h(y,t) = \int_{\R^n}\Phi_{A}(y-z,1-t)\bar{g}(z,0) \, dz,
\end{equation}
where $\Phi_A$ is the heat kernel given by
\begin{equation}\label{eq:HeatKernel}
\Phi_A(y,t) = \frac{1}{(4\pi t)^{n/2}|A|^{1/2}}\exp\left( -\frac{\langle A^{-1}y,y\rangle}{4t} \right).
\end{equation}
We can differentiate \eqref{eq:HeatForm2} to obtain the following estimates: There exists $C>0$ such that for all $(y,t) \in \R^n\times [0,1)$ and all $\xi\in \R^{n-1}$ with $|\xi|=1$ 
\begin{equation}\label{eq:estimates}
\left\{\begin{aligned}
|h_{\xi\xi}(y,t)| &\leq \frac{C\Lip(\bar{g})}{\sqrt{(1-t)\Cr}}, \ \ &|h_{\xi\xi\xi}(y,t)| \leq \frac{C\Lip(\bar{g})}{(1-t)\Cr},\\
|h_t(y,t)| &\leq \frac{C\Lip(\bar{g})}{\sqrt{1-t}}, \ \ \text{ and }\ \ &|h_{tt}(y,t)| \leq \frac{C\Lip(\bar{g})}{(1-t)^{3/2}}.\\
\end{aligned}\right.
\end{equation}
 
We now show that $u$ solves \eqref{eq:PDE}. To see this, first note that $u_{x_i} = h_{y_i}+\frac{1}{n}$ for $i=1,\dots,n-1$ and 
\[u_{x_n} =-(h_{y_1}+ \cdots + h_{y_{n-1}}) +  \frac{1}{n}.\]
Therefore
\[\langle \nabla u(x,t),\one\rangle = u_{x_1}+ u_{x_2}+ \cdots + u_{x_n} = 1,\]
and it follows that $\nabla^2 u(x,t)\one =0$ and $\one^T \nabla ^2u(x,t)=0$. Since $q(m) = (r(m),0) + q_n(m)\one$ for all $m\in \B^d$, we thus have
\begin{align*}
\langle \nabla^2 u(x,t) q(m),q(m) \rangle &= \sum_{i,j=1}^{n-1} u_{x_ix_j}(x,t)r_i(m)r_j(m)\\
&=\sum_{i,j=1}^{n-1}h_{y_iy_j}(x_1-x_n,\dots,x_{n-1}-x_n,t) r_i(m)r_j(m)\\
&=\langle \nabla^2 hr(m),r(m)\rangle.
\end{align*}
Since $u_t = h_t$ we find that $u$ satisfies
\begin{equation}\label{eq:otherHeat}
u_t + \frac{1}{2^{d+1}}\sum_{m\in \B^d}\langle \nabla^2u\, q(m),q(m)\rangle = 0.
\end{equation}
Each $\eta \in Q(\nabla u)$ is of the form
\[\eta = q(m) - \frac{\langle \nabla u, q(m)\rangle}{\langle \nabla u, \one\rangle}\one = q(m) - \langle\nabla u, q(m)\rangle\one,\]
for $m\in \B^d$, and so
\[\sum_{\eta \in Q(\nabla u)}\langle \nabla^2 u\, \eta,\eta\rangle = \sum_{m\in \B^d}\langle \nabla^2u \,q(m),q(m)\rangle.\]
Therefore
\[u_t + \frac{1}{2^{d+1}}\sum_{\eta \in Q(\nabla u)}\langle \nabla^2u\, \eta,\eta\rangle = 0.\]
Finally, we check the final condition $u(x,1)=g(x)$. As in the proof of Proposition \ref{prop:levelsets} (ii) we have $\bar{g}_{y_n}=\frac{1}{n}$, and so
\begin{align*}
u(x,1) &= \bar{g}(x_1-x_n,\dots,x_{n-1}-x_n,0)+\frac{1}{n}(x_1+\cdots + x_n) \\
&= \bar{g}\left(x_1-x_n,\dots,x_{n-1}-x_n,\tfrac{1}{n}(x_1+\cdots + x_n)\right) = g(x),
\end{align*}
which completes the proof.
\end{proof}
\begin{remark}
Notice that in the proof of Theorem \ref{thm:reg2}, we showed that $u$ solves the linear heat equation \eqref{eq:otherHeat}. This depends crucially on the translation property \eqref{eq:translation} holding. In this case, we can replace \eqref{eq:span} with the condition that
\begin{equation}\label{eq:otherspan}
B:=\frac{1}{2^{d+1}}\sum_{m\in \B^d}q(m)\otimes q(m) \geq \theta I
\end{equation}
for some $\theta>0$, and the results of Theorem \ref{thm:reg2} continue to hold. However, we claim that \eqref{eq:otherspan} implies \eqref{eq:span}, and so the condition \eqref{eq:span} is more general. To see this, assume \eqref{eq:otherspan} holds, and note that 
\[q(m) = (r(m),0) + q_n(m)\one.\]
Let $\xi\in \R^{n-1}$ and choose $\xi_n=-(\xi_1+\cdots+\xi_{n-1})$ so that $\langle \one, (\xi,\xi_n)\rangle=0$. Then
\[\langle q(m),(\xi,\xi_n)\rangle_{\R^n} = \langle r(m),\xi\rangle_{\R^{n-1}}.\]
Therefore, for $A$ given by \eqref{eq:Acoeff} we have
\begin{align*}
\langle A\xi,\xi\rangle_{\R^{n-1}} &= \frac{1}{2^{d+1}}\sum_{m\in \B^d}|\langle r(m),\xi\rangle_{\R^{n-1}}|^2\\
&=\frac{1}{2^{d+1}}\sum_{m\in \B^d}|\langle q(m),(\xi,\xi_n)\rangle_{\R^n}|^2\\
&=\langle B(\xi,\xi_n),(\xi,\xi_n)\rangle_{\R^{n-1}} \geq \theta (|\xi|^2 + |\xi_n|^2) \geq \theta|\xi|^2,
\end{align*}
which establishes the claim.
\label{rem:otherspan}
\end{remark}

When the translation property \eqref{eq:translation} does not hold, the situation is more complicated. Following similar ideas to Theorem \ref{thm:reg2}, we show below that the level sets $\{u(x,t)=s\}$ evolve by the same heat equation. However, we loose the parabolic smoothing across level sets in this case, and  thus we require additional regularity for $g$.
\begin{theorem}\label{thm:reg1}
Assume $g\in C^4(\R^n)$, $g$ is $\theta$-increasing, and let $u\in C(\R^n\times [0,1])$ be the viscosity solution of \eqref{eq:PDE}. Then, $u\in C^{4,2}(\R^n\times [0,1))$ with $[u(\cdot,t)]_{C^{4}(\R^n)}$ and $[u(x,\cdot)]_{C^2([0,1])}$ depending only on $\theta$ and $[g]_{C^4(\R^n)}$ for all $(x,t)\in \R^n\times [0,1)$.
\end{theorem}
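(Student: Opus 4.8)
The plan is to exploit the geometric character of \eqref{eq:PDE} proved in Lemma \ref{lem:geometric}: the level sets $\{u(\cdot,t)=s\}$ will be shown to evolve by a \emph{linear} (possibly degenerate) heat equation, exactly as in Theorem \ref{thm:reg2}, the only new feature being that without the translation property the level sets are unequally spaced, so their parametrization depends nonlinearly on the height $s$. I work in the rotated coordinates $y=Rx$ from \eqref{eq:changecoord}, set $\bar u(y,t)=u(R^{-1}y,t)$ and $\bar g(y)=g(R^{-1}y)$, and note that $R\one$ points along $e_n$, so $u$ being $\theta$-increasing is the same as $\bar u_{y_n}\ge\theta/n$. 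By Proposition \ref{prop:levelsets} the level set $\{\bar g=s\}$ is the graph $\{y_n=h^0(y',s)\}$ with $h^0\in C^4$, $\sqrt n\,\Lip(g)^{-1}\le h^0_s\le n\theta^{-1}$, and the $C^4$ seminorms of $y'\mapsto h^0(y',s)$ bounded in terms of $[g]_{C^4}$ and $\theta$, uniformly in $s$.

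For each fixed $s\in\R$ I solve the backward linear heat equation
\[
H_t+\frac{1}{2^{d+1}}\sum_{m\in\B^d}\langle\nabla^2_{y'}H\,r(m),r(m)\rangle=0\ \text{ in }\ \R^{n-1}\times(0,1),\qquad H(\cdot,1;s)=h^0(\cdot,s),
\]
whose solution is the convolution of $h^0(\cdot,s)$ with the (possibly degenerate) Gaussian kernel $\Phi_A(\cdot,1-t)$, $A=\tfrac1{2^{d+1}}\sum_m r(m)\otimes r(m)\ge0$, as in \eqref{eq:HeatForm2}. Since convolution with a probability measure does not increase $C^k$ seminorms, $[H(\cdot,t;s)]_{C^k}\le[h^0(\cdot,s)]_{C^k}$ for all $t\in[0,1]$ and $k\le4$, and differentiating the equation controls the $t$-derivatives, $H_t=-\text{Tr}(A\nabla^2_{y'}H)$ and $H_{tt}=\text{Tr}(A\nabla^2_{y'}\text{Tr}(A\nabla^2_{y'}H))$. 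Moreover $\partial_sH$ solves the same equation with terminal data $h^0_s$, so by the maximum principle $\sqrt n\,\Lip(g)^{-1}\le\partial_sH\le n\theta^{-1}$; the higher mixed $(y',s)$-derivatives of $H$ are controlled similarly. I then define $\bar u$ implicitly by $H(y',t;\bar u(y,t))=y_n$, which is legitimate because $s\mapsto H(y',t;s)$ is strictly increasing and proper onto $\R$, and set $u=\bar u\circ R$.

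Next, verify that this $u$ is a classical solution of \eqref{eq:PDE} with $u(\cdot,1)=g$. Differentiating the implicit relation gives $\bar u_{y_n}=1/\partial_sH$, $\bar u_{y_i}=-H_{y_i}/\partial_sH$ ($i<n$), $\bar u_t=-H_t/\partial_sH$, and a further differentiation expresses $\nabla^2_y\bar u$ through the $(y',s)$-derivatives of $H$. Carrying out the change of variables of \eqref{eq:PDE} into $y$-coordinates — as in the last step of the proof of Theorem \ref{thm:reg2}, except that the translation identity $\nabla^2u\,\one=0$ is unavailable and is replaced by the direct computation $R\bigl(q(m)-\tfrac{\langle\nabla u,q(m)\rangle}{\langle\nabla u,\one\rangle}\one\bigr)=\bigl(r(m),\,-\langle\nabla_{y'}\bar u,r(m)\rangle/\bar u_{y_n}\bigr)$ — turns \eqref{eq:PDE} into precisely the heat equation satisfied by $H$ (the cross and pure-$y_n$ Hessian terms cancel after inserting the formulas for $\nabla^2_y\bar u$). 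The identity $\bar u_{y_n}=1/\partial_sH\ge\theta/n$ shows $u$ is $\theta$-increasing, and since $g$ (hence $u$) has at most linear growth, the uniqueness part of Theorem \ref{thm:existence} identifies $u$ with the viscosity solution of \eqref{eq:PDE}.

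Finally, for the quantitative bounds: via $u=\bar u\circ R$ and the inverse-function formulas above, every $x$-derivative of $u$ of order $\le4$ is a rational expression in the $(y',s)$-derivatives of $H$ of order $\le4$ evaluated at $s=\bar u(y,t)$, whose denominators are powers of $\partial_sH\in[\sqrt n\,\Lip(g)^{-1},\,n\theta^{-1}]$; combining this with the heat-semigroup seminorm estimates and Proposition \ref{prop:levelsets}(i),(iii) bounds everything by $[g]_{C^4}$ and $\theta$, uniformly in $(y,t)$, which gives the claimed bound on $[u(\cdot,t)]_{C^4(\R^n)}$. The time derivatives are handled the same way, from $u_t=\text{Tr}(A\nabla^2_{y'}H)/\partial_sH$ and an analogous second-order formula for $u_{tt}$. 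All of this is uniform up to $t\to1^-$, since the terminal data $h^0$ is already $C^4$ and no parabolic smoothing — hence no blow-up as $t\to1$ — is involved. I expect the main obstacle to be the bookkeeping in this last step: one must organize the many terms produced by repeated implicit differentiation so that the cancellations are visible and the final constants genuinely depend only on $[g]_{C^4}$ and $\theta$; the remaining non-trivial point is establishing the properness of $s\mapsto H(y',t;s)$ needed to define $\bar u$ on all of $\R^n\times[0,1)$.
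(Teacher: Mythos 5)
Your proposal follows the same strategy as the paper: decompose $g$ into level sets via Proposition \ref{prop:levelsets}, evolve each level-set graph $\{y_n=h^0(y',s)\}$ by the constant-coefficient operator $h_t+\mathrm{Tr}(A\nabla^2_{y'}h)=0$ with $A=\tfrac1{2^{d+1}}\sum_m r(m)\otimes r(m)$, recover $\bar u$ by inverting the implicit relation $H(y',t;\bar u)=y_n$, and undo the linear change $y=Rx$. The one real difference is technical: the paper's proof inserts an $\eps\Delta$ viscosity term so that the kernel $\Phi_{A+\eps I}$ is a bona fide Gaussian and the heat equation is uniformly elliptic, derives $C^{4,2}$ estimates uniform in $\eps$ (equations \eqref{eq:derest} and \eqref{eq:forIFT}), shows that $u_\eps(x,t)=v_\eps(Rx,t)$ satisfies \eqref{eq:PDE} only up to an $O(\eps)$ error (equation \eqref{eq:vpde}), and finally lets $\eps\to0$. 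You skip the regularization and work directly with the possibly degenerate kernel, observing correctly that it is still a probability measure, so convolution preserves $C^4$ seminorms of the terminal data and gives a classical solution; this removes the approximation error and the limiting step, a modest simplification. What you flag as "bookkeeping" is precisely Steps 3--4 of the paper's proof and is not trivial: one must implicitly differentiate the relation \eqref{eq:vlevel}, isolate the key algebraic cancellation by testing against vectors $\xi$ orthogonal to $\nabla v$ (the paper's identity \eqref{eq:vvv}, using \eqref{eq:id}), check that the set of test vectors is exactly $\bar Q(\nabla v)$ from \eqref{eq:Rdef}, and then verify $R^{-1}\bar Q(p)=Q(R^Tp)$ to return to $x$-coordinates. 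Since you have the right coordinate change and the correct vector identity $R\bigl(q(m)-\tfrac{\langle\nabla u,q(m)\rangle}{\langle\nabla u,\one\rangle}\one\bigr)=\bigl(r(m),\,-\langle\nabla_{y'}\bar u,r(m)\rangle/\bar u_{y_n}\bigr)$, carrying this out would reproduce the paper's proof. The properness concern you raise is immediate from the uniform lower bound $\partial_s H\ge\sqrt n\,\Lip(g)^{-1}>0$.
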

\begin{proof}
The proof is split into several steps.

1. For $\eps>0$, define the function $h_\eps:\R^{n-1}\times [0,1]\times \R\to \R$ so that for every $s\in \R$ the function  $(y,t)\mapsto h_\eps(y,t,s)$ is the solution of the linear heat equation
\begin{equation}\label{eq:hpde}
\left\{\begin{aligned}
h_{\eps,t} + \frac{1}{2^{d+1}}\sum_{m\in \B^d}\langle \nabla^2h_\eps r(m),r(m)\rangle +\eps\Delta h_\eps&= 0,&&\text{in } \R^{n-1}\times (0,1)\\ 
h_\eps(y,1,s) &=h^0(y,s),&&\text{for } y\in \R^{n-1},
\end{aligned}\right.
\end{equation}
where $h^0$ is defined in Proposition \ref{prop:levelsets}. We will often drop the dependence on $\eps$  for notational convenience. As in the proof of Theorem \ref{thm:reg2}, the solution of \eqref{eq:hpde} is given by
\begin{equation}\label{eq:HeatForm}
h(y,t,s) = \int_{\R^n}\Phi_{A+\eps I}(y-z,1-t)h^0(z,s) \, dz.
\end{equation}
By Proposition \ref{prop:levelsets}, $h^0\in C^4(\R^n)$, and so $h\in C^4(\R^n\times [0,1))$. Furthermore, we can differentiate formula \eqref{eq:HeatForm} to obtain for all  $(y,t,s)\in \R^{n-1}\times [0,1]\times \R$ the following estimates, independent of $\eps>0$:
\begin{equation}\label{eq:derest}
\left\{\begin{aligned}
|D_{(y,s)}^\alpha h(y,t,s)| &\leq \|D^\alpha h^0\|_{L^\infty(\R^n)}, \ \ \ 1\leq |\alpha|\leq 4\\
|h_t(y,t,s)| &\leq C\|D^2_y h^0\|_{L^\infty(\R^n)},\\
|h_{tt}(y,t,s)| &\leq C\|D^4_y h^0\|_{L^\infty(\R^n)}.
\end{aligned}\right.
\end{equation}
We can also differentiate \eqref{eq:HeatForm} in $s$ and apply Proposition \ref{prop:levelsets} to obtain
\begin{equation}\label{eq:forIFT}
\sqrt{n}\Lip(g)^{-1} \leq h_s(y,t,s)  \leq n\theta^{-1},
\end{equation}
for all $(y,t)\in R^{n-1}\times [0,1]$ and $s\in \R$.

2. By \eqref{eq:forIFT}, for every $\eps>0$ and $(y,t)\in R^{n}\times [0,1]$ there exists a unique $v_\eps\in \R$ such that $h_\eps(y_1,\dots,y_{n-1},t,v_\eps) = y_n$. This defines a function $v_\eps:\R^n\times [0,1]\to \R$ that satisfies
\begin{equation}\label{eq:vIFT}
h_\eps(y_1,\dots,y_{n-1},t,v_\eps(y,t)) = y_n
\end{equation}
for all $(y,t)\in \R^n\times [0,1]$. We again drop the subscript $\eps$ for convenience. By \eqref{eq:forIFT} and the implicit function theorem, $v\in C^4(\R^n\times [0,1))$. We can differentiate \eqref{eq:vIFT} and use \eqref{eq:derest} and Proposition \ref{prop:levelsets} (iii) to find that $[v(\cdot,t)]_{C^4(\R^n)}$ and $[v(y,\cdot)]_{C^2([0,1])}$ are bounded depending only on $\theta$ and $[g]_{C^4(\R^n)}$, and are in particular independent of $\eps>0$. We also compute
\[h_s(y_1,\dots,y_{n-1},t,v(y,t))v_{y_n}(y,t) = 1,\]
from which we obtain 
\begin{equation}\label{eq:vynbound}
0< \frac{1}{n}\theta \leq v_{y_n}(y,t)  \leq \frac{1}{\sqrt{n}}\Lip(g).
\end{equation}
Finally, we note that $v$ also satisfies
\begin{equation} \label{eq:vlevel}
v(y_1, \dots, y_{n-1}, h(y_1, \dots, y_{n-1}, t, s), t) = s
\end{equation}
for all $(y,t)\in \R^n\times [0,1]$ and $s\in \R$. 

3. We now derive a PDE satisfied by $v$. Differentiating \eqref{eq:vlevel} in $y_i$ for $1\leq i \leq n-1$ we have
\begin{equation}\label{eq:vyi}
v_{y_i} + v_{y_{n}}h_{y_i} =0,
\end{equation}
and differentiating in $t$ yields
\begin{equation}\label{eq:vt}
v_t + v_{y_n}h_t = 0.
\end{equation}
Note that in all formulas, we evaluate at $(y,t)\in \R^n\times [0,1)$ and set $s=v(y,t)$. Differentiating \eqref{eq:vyi} in $y_j$ yields
\[v_{y_iy_j} + v_{y_iy_n}h_{y_j} + (v_{y_jy_n}+v_{y_ny_n}h_{y_j})h_{y_i} + v_{y_{n}}h_{y_iy_j}  =0.\]
Multiply by $v_{y_n}^2$ on both sides and use \eqref{eq:vyi} to obtain
\[v_{y_n}^2v_{y_iy_j} - v_{y_n}v_{y_iy_n}v_{y_j} - (v_{y_n}v_{y_jy_n}-v_{y_ny_n}v_{y_j})v_{y_i} + v_{y_{n}}^3h_{y_iy_j}  =0,\]
which simplifies to
\begin{equation}\label{eq:id}
v_{y_n}^2v_{y_iy_j} - v_{y_n}(v_{y_iy_n}v_{y_j} + v_{y_jy_n}v_{y_i}) + v_{y_ny_n}v_{y_i}v_{y_j} + v_{y_{n}}^3h_{y_iy_j}  =0.
\end{equation}
Let $\xi \in \R^n$ with $\langle\xi,\nabla v\rangle = 0$. This implies that
\[\sum_{i=1}^{n-1}\xi_iv_{y_i} = -\xi_nv_{y_n}.\]
Multiply by $\xi_i\xi_j$ on both sides of \eqref{eq:id}, sum over $1\leq i,j\leq n-1$, and use the identity above to obtain 
\[v_{y_n}^2 \sum_{i,j=1}^{n-1}v_{y_iy_j}\xi_i\xi_j + 2v_{y_n}^2 \sum_{i=1}^{n-1}v_{y_iy_n}\xi_i\xi_n + v_{y_n}^2 v_{y_ny_n}\xi_n^2 + v_{y_n}^3 \sum_{i,j=1}^{n-1}h_{y_iy_j}\xi_i\xi_j= 0.\]
It follows that whenever $\langle\xi, \nabla v\rangle =0$ we have
\begin{equation}\label{eq:vvv}
\langle \nabla^2v \,\xi,\xi\rangle =-v_{y_n}\sum_{i,j=1}^{n-1}h_{y_iy_j}\xi_i\xi_j.
\end{equation}

We define  $\bar{Q}(p)\subset p^\perp$ by 
\begin{equation}\label{eq:Rdef}
\bar{Q}(p)=\left\{Rq(m) - \frac{\langle p,Rq(m)\rangle}{p_n}e_n \, : \, m\in \B^d\right\}.
\end{equation}
Let $\xi\in \bar{Q}(\nabla v)$ and $m\in \B$ such that
\begin{equation}\label{eq:xiform}
\xi = Rq(m) - \frac{\langle p, Rq(m)\rangle}{p_n}e_n.
\end{equation}
By the definition of $R$ we have
\[\xi_i = q_i(m) - q_n(m) = r_i(m)\]
for all $i\leq n-1$. Since $\langle\xi,\nabla v \rangle=0$, we have by \eqref{eq:vvv} that
\begin{equation}\label{eq:eta_r}
\langle \nabla^2v \,\xi,\xi\rangle= -v_{y_n}\sum_{i,j=1}^{n-1}h_{y_iy_j}r_i(m)r_j(m)=-v_{y_n}\langle \nabla^2h\, r(m),r(m)\rangle.
\end{equation}
We multiply \eqref{eq:hpde} by $-v_{y_n}$ and use $v_t= - v_{y_n}h_t$ to obtain
\[v_t -v_{y_n}\frac{1}{2^{d+1}}\sum_{m\in \B^d}\langle \nabla^2h\, r(m),r(m)\rangle  - \eps v_{y_n}\Delta h= 0.\]
We substitute \eqref{eq:eta_r} in the above and use that $v_{y_n}|\Delta h|\leq C$, with $C$ independent of $\eps$, to obtain
\begin{equation}\label{eq:vpde}
v_t + \frac{1}{2^{d+1}}\sum_{\xi\in \bar{Q}(\nabla v)}\langle \nabla^2v\, \xi,\xi\rangle = O(\eps).
\end{equation}
To check the final time condition, we note that by \eqref{eq:vIFT} evaluated at $t=1$ we have
\[h^0(y_1,\dots,y_{n-1},v(y,1)) = y_n.\]
Comparing this with \eqref{eq:glevel2} in the proof of Proposition \ref{prop:levelsets}, we see that $v(y,1) = \bar{g}(y)=g(R^{-1}y)$.

4. Define $u_\eps(x,t)=v_\eps(Rx,t)$, and compute 
\begin{equation}\label{eq:chainrule}
\nabla_x u_\eps(x,t) = R^T \nabla_y v_\eps(y,t)\ \ \  \text{ and }\ \ \ \nabla_x^2 u_\eps(x,t) = R^T \nabla^2_y v_\eps(y,t)R.
\end{equation}
If follows that $\langle \nabla_x^2 u_\eps\, \eta,\eta\rangle = \langle \nabla_y^2 v_\eps R\eta,R\eta\rangle$. Set $\xi = R\eta$ so that
\[\langle \nabla_y^2 v_\eps \xi,\xi\rangle=\langle \nabla_x^2 u_\eps\, R^{-1}\xi,R^{-1}\xi\rangle.\]
where $\xi\in \bar{Q}(\nabla v)$. We claim that $R^{-1}\bar{Q}(p) = Q(R^Tp)$, where $Q$ is given in \eqref{eq:Qdef}. To see this, let $\xi \in \bar{Q}(p)$ and $m\in \B^d$ so that \eqref{eq:xiform} holds. We write
\begin{align*}
R^{-1}\xi&= q(m) - \frac{\langle p,Rq(m)\rangle }{\langle p,e_n\rangle}R^{-1}e_n\\
&= q(m) - \frac{\langle R^Tp,q(m)\rangle }{\langle R^Tp,R^{-1}e_n\rangle}R^{-1}e_n.
\end{align*}
Since $R^{-1}e_n = \frac{1}{n}\one$, this establishes the claim. Therefore $R^{-1}\bar{Q}(\nabla_y v_\eps) = Q(R^T \nabla_x u_\eps)$ and we have
\[\sum_{\xi\in \bar{Q}(\nabla_y v_\eps)}\langle \nabla_y^2 v_\eps \xi,\xi\rangle=\sum_{\eta\in Q(\nabla_x u_\eps)}\langle \nabla_x^2 u_\eps\, \eta,\eta\rangle.\]
Combining this with \eqref{eq:vpde} we have
\begin{equation}
u_{\eps,t} + \frac{1}{2^{d+1}}\sum_{\eta \in Q(\nabla u_\eps)} \langle \nabla^2 u_\eps\,\eta,\eta \rangle = O(\eps).
\end{equation}
Since $v(y,1) = g(R^{-1}y)$ we have the final time condition $u(x,1) = g(x)$. By \eqref{eq:vynbound} we have
\[\langle \nabla u_{\eps},\one\rangle = nv_{y_n} \geq \theta.\]
Therefore $u_\eps$ is $\theta$-increasing. Sending $\eps\to 0$ we find that $u_\eps\to u$, where $u$ is the viscosity solution of \eqref{eq:PDE}, which completes the proof. 
\end{proof}

\section{Convergence proofs}
\label{sec:proofs}

We now give the proofs of our main convergence results. The proofs rely on a common lemma.
\begin{lemma}\label{lem:scheme}
Assume \eqref{eq:redund} holds. Let $N\geq 1$, $k\geq d+1$, and set $\eps = N^{-1/2}$. Let $(x_0,t_0) \in \R^n \times [0,1]$ and let $\phi\in C^{3,2}(\R^n\times [0,t_0])$. Assume there exists $\gamma>0$ such that $\varphi_{x_i}(x_0,t_0) \geq \gamma$ for all $i\in \{1,\dots,n\}$, and  set
\[c_t = \sup_{t \in [0,t_0]}|\phi_{tt}(x_0,t)|, \ \ c_{x,2} = \sup_{\substack{x\in \R^n\\ |\xi|=1}}|\phi_{\xi\xi}(x,t_0)|, \ \ \text{ and } \ \ c_{x,3} = \sup_{\substack{x\in \R^n\\ |\xi|=1}}|\phi_{\xi\xi\xi}(x,t_0)|.\]
There exists $c>0$, depending only on $n$, such that when $c_{x,2}k\eps \leq c \,\cq \gamma$ and $t_0 - k\eps^2 \geq 0$ the following hold.
\begin{enumerate}[(i)]
\item  If
\[\phi_t(x_0,t_0) + \frac{1}{2^{d+1}}\sum_{\eta \in Q(\nabla \phi(x_0,t_0))}\langle \nabla^2 \phi(x_0,t_0)  \eta,\eta\rangle \leq 0\]
then 
\begin{align*}
u^+_N(x_0,t_0-k\eps^2) - \phi(x_0,t_0-k\eps^2)&\leq \sup_{x\in B(x_0,2k\eps\sqrt{n})}(u^+_N(x,t_0) - \phi(x,t_0))\\
&\hspace{1in} + C\left( c_{x,2}d\eps^2 + c_{x,2}^2\gamma^{-1}k^2\eps^3+  c_{x,3}k^3\eps^3 +c_t k^2\eps^4\right).\notag
\end{align*}
\item  If
\[\phi_t(x_0,t_0) + \frac{1}{2^{d+1}}\sum_{\eta \in Q(\nabla \phi(x_0,t_0))}\langle \nabla^2\phi(x_0,t_0) \eta,\eta\rangle \geq 0\]
then 
\begin{align*}
u^-_N(x_0,t_0-k\eps^2) - \phi(x_0,t_0-k\eps^2)&\geq \inf_{x\in B(x_0,2k\eps\sqrt{n})}(u^-_N(x,t_0) - \phi(x,t_0))\\
&\hspace{1in} - C\left( c_{x,2}d\eps^2 + c_{x,2}^2\gamma^{-1}k^2\eps^3+  c_{x,3}k^3\eps^3 +c_t k^2\eps^4\right).\notag
\end{align*}
\end{enumerate}
\end{lemma}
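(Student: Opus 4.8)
I would prove part (i) in detail and note that part (ii) is entirely symmetric (swap $\max/\sup$ for $\min/\inf$, reverse the inequalities, and use the supersolution-type test inequality). Throughout write $\eps=N^{-1/2}$; the plan is a consistency argument built on the $k$-step dynamic programming principle. First, fix $m^\ast\in\B^d$ with $u_N^+(x_0,t_0-k\eps^2)=u_N(x_0,t_0-k\eps^2;m^\ast)$. Since $t_0-k\eps^2\ge 0$ and $(t_0-k\eps^2)+N^{-1}k=t_0\le 1$, Proposition \ref{prop:dppuN} applies and expresses $u_N^+(x_0,t_0-k\eps^2)$ as a $k$-fold min--max over $f_1,\dots,f_k\in[-1,1]$ and $b_1,\dots,b_k\in\{\pm1\}$ of $u_N\big(x_0+\eps\sum_{i=1}^k b_i\delta_i,\,t_0;\,m^{k+1}\big)$, where $m^1=m^\ast$, $m^{i+1}=m^i|b_i$, and $\delta_i=q(m^i)-\one f_i$. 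Since $|\delta_i|\le 2\sqrt n$, the displacement $y:=\eps\sum_i b_i\delta_i$ always satisfies $|y|\le 2k\eps\sqrt n$, so $x_0+y\in B(x_0,2k\eps\sqrt n)$; hence $u_N(x_0+y,t_0;m^{k+1})\le u_N^+(x_0+y,t_0)\le\phi(x_0+y,t_0)+S$ with $S:=\sup_{B(x_0,2k\eps\sqrt n)}(u_N^+(\cdot,t_0)-\phi(\cdot,t_0))$. As $S$ is constant it commutes with all nested $\min$/$\max$ operations, giving
\begin{equation*}
u_N^+(x_0,t_0-k\eps^2)\le S+\min_{|f_1|\le1}\max_{b_1}\cdots\min_{|f_k|\le1}\max_{b_k}\phi\Big(x_0+\eps\textstyle\sum_{i=1}^k b_i\delta_i,\,t_0\Big).
\end{equation*}

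Next I would Taylor expand $\phi$: in space to third order at time $t_0$ (using $\phi(\cdot,t_0)\in C^3$ and the bound $c_{x,3}$) and in time to second order at the point $x_0$ (using $\phi(x_0,\cdot)\in C^2$ and $c_t$). Because $|y|\le 2k\eps\sqrt n$ the remainders are bounded by $C(c_{x,3}k^3\eps^3+c_tk^2\eps^4)$ \emph{uniformly in the strategies}, so they pass through the min--max. After factoring $\eps^2$ the remaining quadratic-in-$(b,\delta)$ expression is exactly $\eps^2\L_{k,\eps}$ in the notation of Definition \ref{def:localproblem}, so that
\[
\min_{|f_1|\le1}\max_{b_1}\cdots\max_{b_k}\phi(x_0+y,t_0)=\phi(x_0,t_0-k\eps^2)+k\eps^2\phi_t(x_0,t_0)+\eps^2\,\L_{k,\eps}\big(\nabla^2\phi(x_0,t_0),\nabla\phi(x_0,t_0),m^\ast\big)+O\big(c_{x,3}k^3\eps^3+c_tk^2\eps^4\big),
\]
which is valid since $\nabla\phi(x_0,t_0)\in(0,\infty)^n$. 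I would then invoke Theorem \ref{thm:cell} with $X=\nabla^2\phi(x_0,t_0)$, $p=\nabla\phi(x_0,t_0)$, $m=m^\ast$: its hypotheses hold because $\|X\|\le c_{x,2}$, $\gamma_p=\min_i\phi_{x_i}(x_0,t_0)\ge\gamma$, and $c_{x,2}k\eps\le c\,\cq\gamma$ forces $\|X\|k\eps\le c\,\cq\gamma_p$ (after shrinking $c$), with $k\ge d+1$ assumed. This yields
\[
\eps^2\L_{k,\eps}(X,p,m^\ast)=k\eps^2\cdot\tfrac{1}{2^{d+1}}\!\!\sum_{\eta\in Q(\nabla\phi(x_0,t_0))}\!\!\langle\nabla^2\phi(x_0,t_0)\eta,\eta\rangle+O\big(c_{x,2}d\eps^2+c_{x,2}^2\gamma^{-1}k^2\eps^3\big).
\]

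Combining the last three displays, the quantity $k\eps^2\big(\phi_t(x_0,t_0)+\frac{1}{2^{d+1}}\sum_{\eta\in Q(\nabla\phi(x_0,t_0))}\langle\nabla^2\phi(x_0,t_0)\eta,\eta\rangle\big)$ appears; it is $\le 0$ by the hypothesis of part (i), so discarding it and absorbing the four error scales $c_{x,2}d\eps^2$, $c_{x,2}^2\gamma^{-1}k^2\eps^3$, $c_{x,3}k^3\eps^3$, $c_tk^2\eps^4$ into $C$ gives $u_N^+(x_0,t_0-k\eps^2)-\phi(x_0,t_0-k\eps^2)\le S+C(\cdots)$, which is (i) after recalling the definition of $S$. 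Part (ii) is identical with the obvious sign reversals: one picks $m^\ast$ realizing the minimum defining $u_N^-(x_0,t_0-k\eps^2)$, bounds $u_N(x_0+y,t_0;m^{k+1})\ge u_N^-(x_0+y,t_0)\ge\phi(x_0+y,t_0)+\inf_{B}(u_N^--\phi)(\cdot,t_0)$, and keeps the opposite inequality throughout. The only conceptually substantial ingredient — the averaging of the initial state $m^\ast$ over the de Bruijn graph as $k\to\infty$ — is precisely Theorem \ref{thm:cell}, already in hand; the remaining work is routine but must be done carefully: ensuring the Taylor remainders are uniform in the strategies so they survive the min--max, matching the quadratic min--max exactly to $\eps^2\L_{k,\eps}$, and tracking the four error terms against the smallness condition $c_{x,2}k\eps\le c\,\cq\gamma$.
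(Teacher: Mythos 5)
Your proposal is correct and follows the paper's proof essentially step for step: pick $m^\ast$ attaining the max/min, apply the $k$-step dynamic programming principle from Proposition \ref{prop:dppuN}, pass from $u_N$ to $u_N^\pm$ to $\phi$ plus a constant that commutes with the nested min--max, Taylor expand with uniform remainders, recognize $\eps^2\L_{k,\eps}$, invoke Theorem \ref{thm:cell}, and discard the sign-definite term. The only cosmetic difference is that you call the sup $S$ where the paper calls it $M$.
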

\begin{proof}
We will prove (i); the proof of (ii) is similar. Let us write
\[M = \sup_{x\in B(x_0,2k\eps\sqrt{n})}(u^+_N(x,t_0) - \phi(x,t_0)).\]
Let $m\in \B^d$ such that 
\[u_N^+(x,t_0-k\eps^2) = u_{N}(x_0,t_0-k\eps^2;m).\]
Then by Proposition \ref{prop:dppuN} we have
\begin{align*}
u^+_{N}(x_0,t_0-k\eps^2) &= u_{N}(x_0,t_0-k\eps^2;m)\\
&=\min_{|f_1|\leq 1}\max_{b_1=\pm 1}\cdots \min_{|f_k|\leq 1}\max_{b_k=\pm 1} \left\{u_N\left( x_0+\eps \sum_{i=1}^k b_i \delta_i, t_0;m^{k+1}\right)\right\}\\
 &\leq \min_{|f_1|\leq 1}\max_{b_1=\pm 1}\cdots \min_{|f_k|\leq 1}\max_{b_k=\pm 1} \left\{u_N^+\left( x_0+\eps \sum_{i=1}^k b_i \delta_i, t_0\right)\right\}\\
 &\leq \min_{|f_1|\leq 1}\max_{b_1=\pm 1}\cdots \min_{|f_k|\leq 1}\max_{b_k=\pm 1} \left\{\phi\left( x_0+\eps \sum_{i=1}^k b_i \delta_i, t_0\right)\right\} + M,
\end{align*}
where $m^1=m$, $m^{i+1}=m^i|b_i$, and $\delta_i = q(m^i) - \one f_i$. Taylor expanding $\phi$ we have
\begin{align*}
\phi\left( x_0+\eps \sum_{i=1}^k b_i \delta_i, t_0\right) &= \phi(x_0,t_0) + \eps\sum_{i=1}^k b_i\langle \nabla  \phi ,\delta_i\rangle + \frac{\eps^2}{2}\sum_{i,j=1}^k b_ib_j \langle \nabla^2\phi\, \delta_i,\delta_j\rangle + O(c_{x,3} k^3\eps^3),
\end{align*}
where $\nabla \varphi$ and $\nabla^2\varphi$ are evaluated at $(x_0,t_0)$. We also have
\[\phi(x_0,t_0) =\phi(x_0,t_0-k\eps^2)   + k\eps^2 \phi_t(x_0,t_0) + O(c_tk^2\eps^4).\]
Plugging this in above and invoking Theorem \ref{thm:cell} we obtain
\begin{align*}
&u_N^+(x_0,t_0-k\eps^2) - \phi(x_0,t_0-k\eps^2)\\
&\hspace{0.5in}\leq k\eps^2 \phi_t(x_0,t_0) + \min_{|f_1|\leq 1}\max_{b_1=\pm 1}\cdots \min_{|f_k|\leq 1}\max_{b_k=\pm 1} \left\{\eps\sum_{i=1}^k b_i\langle \nabla \phi ,\delta_i\rangle + \frac{\eps^2}{2}\sum_{i,j=1}^k b_ib_j \langle \nabla^2\phi\,\delta_i,\delta_j\rangle\right\} \\
&\hspace{4.4in}+ C(c_{x,3} k^3\eps^3 + c_t k^2\eps^4)+ M\\
&\hspace{0.5in}=k\eps^2 \phi_t(x_0,t_0) + \eps^2 \L_{k,\eps}(\nabla^2 \phi(x_0,t_0),\nabla \phi(x_0,t_0),m) +C(c_{x,3} k^3\eps^3 + c_t k^2\eps^4)+ M\\
&\hspace{0.5in}\leq k\eps^2\left( \phi_t + \frac{1}{2^{d+1}}\sum_{\eta \in Q(\nabla u)}\langle \nabla^2 \phi\, \eta,\eta\rangle \right) +C\left( c_{x,2}d\eps^2 + c_{x,2}^2\gamma^{-1}k^2\eps^3+  c_{x,3}k^3\eps^3 +c_t k^2\eps^4\right)+ M\\
&\hspace{0.5in}=C\left( c_{x,2}d\eps^2 + c_{x,2}^2\gamma^{-1}k^2\eps^3+  c_{x,3}k^3\eps^3 +c_t k^2\eps^4\right)+ M,
\end{align*}
provided $c_{x,2}k\eps \leq c \,\cq \gamma$, which completes the proof.
\end{proof}

We now give the proof of Theorem \ref{thm:main1}.
\begin{proof}[Proof of Theorem \ref{thm:main1}]
We first assume $g\in C^4(\R^n)$ with $[g]_{C^4(\R^n)}< \infty$. Since \eqref{eq:strict_inc} holds, we can apply Theorem \ref{thm:reg1} to show that $u\in C^{4,2}(\R^n\times [0,1])$ and the constants $c_t$, $c_{x,2}$ and $c_{x,3}$ from Lemma \ref{lem:scheme} are uniformly bounded depending only on $[g]_{C^4(\R^n)}$ and $\cg$. We continue to denote these constants for completeness, using the definitions
\[c_t = \sup_{\R^n\times [0,1]}|u_{tt}(x,t)|, \ \ c_{x,2} = \sup_{\R^n\times [0,1]}\max_{|\xi|=1}|u_{\xi\xi}|, \ \ \text{ and } \ \ c_{x,3} = \sup_{\R^n\times [0,1]}\max_{|\xi|=1}|u_{\xi\xi\xi}|.\]

Set $\eps=N^{-1/2}$ for convenience. By Definition \eqref{def:value} and equation \eqref{eq:uN}, for any $0 \leq j \leq N$ we have
\begin{align*}
u_N(x,1-j\eps^2;m)&= \eps V_N(\eps^{-1}x,\lceil N(1-j\eps^2)\rceil;m)\\
&=\eps V_N(\eps^{-1}x,N-j;m)\\
&=\eps\min_{|f_{N-j}|\leq 1}\max_{b_{N-j}=\pm 1}\cdots \min_{|f_{N-1}|\leq 1}\max_{b_{N-1}=\pm 1}g\left( \eps^{-1}x + \sum_{i=N-j}^{N-1} b_i(q(m^i) - f_i\one) \right)\\
&\leq \eps (g(\eps^{-1}x) + C \Lip(g) j)= g(x) + C \Lip(g)j\eps,
\end{align*}
due to \eqref{eq:homogeneous}. Therefore, for $0 \leq j \leq k$ we have
\[u_N^+(x,1-j\eps^2) -g(x) \leq C\Lip(g) k\eps.\]
Since $|u_t| \leq C c_{x,2}$, with $C$ depending only on $n$, we have
\[g(x) - u(x,1-t) = u(x,1) - u(x,1-t) \leq C c_{x,2}t.\]
Therefore, for $0 \leq j \leq k$ we have
\begin{align}\label{eq:ugbase}
u_N^+(x,1-j\eps^2) -u(x,1-j\eps^2)  &= u_N^+(x,1-j\eps^2) - g(x) + g(x) - u(x,1-j\eps^2)\\
&\leq C(\Lip(g) k\eps +  c_{x,2} k\eps^2).\notag
\end{align}

Since \eqref{eq:strict_inc} holds,  Proposition \ref{prop:PDEprop} (ii) yields $u_{x_i}\geq \cg$ for all $i\in \{1,\dots,n\}$.  Thus, we can apply Lemma \ref{lem:scheme} with $\varphi=u$ and $\gamma=\cg$ to find that
\begin{align}\label{eq:key}
\sup_{x\in \R^n}(u^+_N(x,t-k\eps^2) - u(x,t-k\eps^2))&\leq \sup_{x\in \R^n}(u^+_N(x,t) - u(x,t))\\
&\hspace{0.5in} + C\left( c_{x,2}d\eps^2 + c_{x,2}^2\cg^{-1}k^2\eps^3+  c_{x,3}k^3\eps^3 +c_t k^2\eps^4\right)\notag
\end{align}
for all $0\leq t \leq 1$ and $d+1\leq k \leq Cc_{x,2}^{-1}\cq \cg \eps^{-1}$ for which $t - k\eps^2 \geq 0$. We recall $\cq$ is defined in \eqref{eq:thetaq_def}. Now fix $0 \leq j \leq k-1$ and $\ell \in \N$ and apply \eqref{eq:ugbase} and then \eqref{eq:key} $\ell$ times to obtain
\begin{align*}
&\sup_{x\in \R^n}(u^+_N(x,1-(j+\ell k)\eps^2) - u(x,1-(j+ \ell k)\eps^2))\leq C \ell \left( c_{x,2}d\eps^2 + c_{x,2}^2\cg^{-1}k^2\eps^3+  c_{x,3}k^3\eps^3 +c_t k^2\eps^4\right)\\
&\hspace{4.75in} + C(\Lip(g) k\eps + c_{x,2} k\eps^2),
\end{align*}
provided $t-(j+\ell k)\eps^2\leq 1$. For every $t\in [0,1]$, $\lceil t\rceil =1- (j+\ell k)\eps^2$ for some $0 \leq j \leq k-1$ and $\ell\in \N$. Hence, we obtain for any $t\in [0,1]$ that
\[\sup_{x\in \R^n}(u^+_N(x,t) - u(x,t))\leq C(1-t) \left( \frac{d}{k}c_{x,2} + c_{x,2}^2\cg^{-1}k\eps+  c_{x,3} k^2\eps +c_t k\eps^2\right) + C(\Lip(g) k\eps + c_{x,2} k\eps^2).\]
Optimizing over $k$ yields $k=\lceil d^{1/3}\eps^{-1/3}\rceil$, and so
\begin{align*}
&\sup_{x\in \R^n}(u^+_N(x,t) - u(x,t))\leq C(1-t)d^{2/3}\eps^{1/3} \left( c_{x,2}+ c_{x,3}+ c_{x,2}^2\cg^{-1}\eps^{1/3}+c_t \eps^{4/3}\right) \\
&\hspace{4in}+ Cd^{1/3}\eps^{2/3}(\Lip(g) + c_{x,2} \eps),
\end{align*}
provided $d+1 \leq d^{1/3}\eps^{-1/3} \leq Cc_{x,2}^{-1}\cq \cg \eps^{-1}$. This is equivalent to $\eps \leq d/(d+1)^3$ and $\eps^{2/3}\leq Cd^{-1/3}c_{x,2}^{-1}\cq \cg$; in other words
\[\eps\leq \min\left\{ \frac{d}{(d+1)^3},Cd^{-1/2}(c_{x,2}^{-1}\cg\cq)^{3/2} \right\},\]
which is equivalent to \eqref{eq:Ncond}, after allowing $C$ to depend on $c_{x,2}$ and recalling $\eps=N^{-1/2}$.

A similar argument shows that
\begin{align*}
&\inf_{x\in \R^n}(u^-_N(x,t) - u(x,t))\geq -C(1-t)d^{2/3}\eps^{1/3} \left( c_{x,2}+ c_{x,3}+ c_{x,2}^2\cg^{-1}\eps^{1/3}+c_t \eps^{4/3}\right) \\
&\hspace{4in}- Cd^{1/3}\eps^{2/3}(\Lip(g) + c_{x,2} \eps).
\end{align*}
This completes the proof in the case that $g\in C^4(\R^n)$, upon allowing $C$ to depend on $[g]_{C^4(\R^n)}$ and $\cg$.

If $g$ is uniformly continuous, then we let $\delta>0$ and define $g^\delta = \eta_\delta * g$, where $\eta_\delta$ is a standard mollifier with bandwidth $\delta>0$. By the uniform continuity of $g$, $g^\delta \to g$ uniformly on $\R^n$ as $\delta\to 0^+$. We define
\[u^\delta_{N}(x,t;m) = \min_{|f_{\lceil Nt\rceil}|\leq 1}\max_{b_{\lceil Nt\rceil}=\pm 1}\cdots \min_{|f_{N-1}|\leq 1}\max_{b_{N-1}=\pm 1} g^\delta\left( x + N^{-1/2}\sum_{i=\lceil Nt\rceil}^{N-1} b_i(q(m^i) - f_i\one) \right),\]
where $m^1=m$ and $m^{i+1}=m^i|b_i$. Since $u_N = u^0_N$ we have 
\begin{equation}\label{eq:discrete}
|u_N-u^\delta_N| \leq \|g - g^\delta\|_{L^\infty(\R^n)}.
\end{equation}
Since $g^\delta\in C^\infty(\R^n)$ and \eqref{eq:strict_inc} holds, the argument above yields that $u^\delta_{N} \to u^\delta$ uniformly on $\R^n\times [0,1]$ as $N\to \infty$, where $u^\delta$ is the viscosity solution of
\begin{equation}\label{eq:PDEdelta}
\left\{\begin{aligned}
u^\delta_t + \frac{1}{2^{d+1}}\sum_{\eta\in Q(\nabla u^\delta)} \langle \nabla^2 u^\delta\,\eta,\eta\rangle &= 0,&&\text{in }\R^n\times (0,1)\\ 
u^\delta &=g^\delta,&&\text{on }\R^n\times \{t=1\}.
\end{aligned}\right.
\end{equation}
By the comparison principle (Theorem \ref{thm:comparison}), we have 
\[\|u - u^\delta\|_{L^\infty(\R^n\times [0,1])}\leq \|g - g^\delta\|_{L^\infty(\R^n)}.\]
Combining this with \eqref{eq:discrete} and the triangle inequality we have
\[\|u^\pm_N - u\|_{L^\infty(\R^n\times [0,1])} \leq 2 \|g - g^\delta\|_{L^\infty(\R^N)} + \max_{m\in \B^d}\|u_N^\delta(\cdot,\cdot;m) - u^\delta\|_{L^\infty(\R^n\times [0,1])}.\]
In particular,
\[\limsup_{N\to \infty}\|u_N - u\|_{L^\infty(\R^n\times [0,1])}  \leq 2 \|g - g^\delta\|_{L^\infty(\R^N)}\]
for all $\delta>0$. Sending $\delta\to 0$ completes the proof. 
\end{proof}

We now give the proof of Theorem \ref{thm:main2}.
\begin{proof}[Proof of Theorem \ref{thm:main2}]
By Theorem \ref{thm:reg2}, $u\in C^\infty(\R^n\times [0,1))$.   As in the proof of Theorem \ref{thm:main1} we have
\[u_N^+(x,1-j\eps^2) -g(x) \leq C\Lip(g) j\eps\]
for all $0 \leq j \leq N$. Due to \eqref{eq:derivatives} from Theorem \ref{thm:reg2}, we have
\begin{align*}
g(x) - u(x,1-t) &=u(x,1) - u(x,1-t)\\
&=\int_{1-t}^1 u_t(x,s) \, ds\\
&\leq C\Lip(g)\int_{1-t}^1 \frac{1}{\sqrt{1-s}}\, ds  = 2C\Lip(g) \sqrt{t},
\end{align*}
for all $t>0$. Let $M\geq 1$, to be determined later. Then for all $0 \leq j \leq 2M$ we have
\begin{align}\label{eq:ugbase2}
u_N^+(x,1-j\eps^2) -u(x,1-j\eps^2) &= u_N^+(x,1-j\eps^2) - g(x) + g(x) - u(x,1-j\eps^2)\\
&\leq C\Lip(g) j\eps + 2C\Lip(g) \sqrt{j}\eps \leq C\Lip(g)M\eps.\notag
\end{align}

As in the proof of Theorem \ref{thm:main1}, we now apply Lemma \ref{lem:scheme} with $\varphi=u$ and $\gamma=\cg$ for $t \leq 1-M \eps^2$. Due to Theorem \ref{thm:reg2}, Lemma \ref{lem:scheme} yields
\begin{equation}\label{eq:key2}
\sup_{x\in \R^n}(u^+_N(x,t-k\eps^2) - u(x,t-k\eps^2))\leq \sup_{x\in \R^n}(u^+_N(x,t) - u(x,t))+ C\left( c_{x,2}d\eps^2+  c_{x,3}k^3\eps^3 +c_t k^2\eps^4\right)
\end{equation}
for all $k\geq d+1$ satisfying $t-k\eps^2 \geq 0$ and
\begin{equation}\label{eq:kcond}
c_{x,2}k\eps \leq C\cq \cg,
\end{equation}
where
\[c_t = \frac{C\Lip(g)}{(1-t)^{3/2}}, \ \ c_{x,2} = \frac{C\Lip(g)}{\sqrt{(1-t)\Cr}}, \ \ \text{ and }\ \ c_{x,3} = \frac{C\Lip(g)}{(1-t)\Cr}.\]
Note we can omit the error term  $c_{x,2}^2\cg^{-1}k^2\eps^3$ due to \eqref{eq:translation} and Remark \ref{rem:Hm}. Upon restricting $t \leq 1-M\eps^2$, we have $c_{x,2} \leq \frac{C\Lip(g)}{\sqrt{M\eps^2\Cr}}$,  and so then \eqref{eq:kcond} becomes
\begin{equation}\label{eq:kcondequiv}
k \leq \frac{C\cg\cq\sqrt{M \Cr}}{\Lip(g)}.
\end{equation}
We assume from now on that $k,M\in \N$ satisfy $M\geq k\geq d+1$ and \eqref{eq:kcondequiv} holds.

Let $0 \leq j \leq k-1$ and $\ell\geq 1$. Then by applying \eqref{eq:ugbase2} and then \eqref{eq:key2} $\ell$ times, we obtain
\begin{align}\label{eq:partial}
&\sup_{x\in \R^n}(u^+_N(x,1-(M + j+\ell k)\eps^2) - u(x,1-(M+j+\ell k)\eps^2))\\
&\hspace{0.15in}\leq C\Lip(g)M\eps + C\Lip(g)\sum_{i=0}^{\ell-1} \Bigg[\frac{d\eps^2}{\sqrt{(M+j+ik)\eps^2\Cr}}+ \frac{k^3\eps^3}{(M+j+ik)\eps^2\Cr}\notag\\
&\hspace{4in} + \frac{k^2\eps^4}{( (M+j+ik)\eps^2)^{3/2}} \Bigg]\notag \\
&\hspace{0.15in}= C\Lip(g)M\eps + C\Lip(g)\sum_{i=0}^{\ell-1} \left[\frac{d\eps}{\sqrt{(M+j+ik)\Cr}}+ \frac{k^3\eps}{(M+j+ik)\Cr} + \frac{k^2\eps}{(M+j+ik)^{3/2}} \right].\notag
\end{align}
Since $\ell\geq 1$ and $M\geq k$ we have
\begin{align*}
\sum_{i=0}^{\ell-1}\frac{1}{\sqrt{M+j+ik}}&\leq \frac{1}{\sqrt{k}} + \frac{1}{\sqrt{k}}\sum_{i=1}^{\ell-1}\frac{1}{\sqrt{i}}\leq \frac{1}{\sqrt{k}}+\frac{1}{\sqrt{k}}\int_{0}^{\ell-1}\frac{1}{\sqrt{x}}\, dx\leq C\sqrt{\frac{\ell}{k}},
\end{align*}
\begin{align*}
\sum_{i=0}^{\ell-1}\frac{1}{M+j+ik}&\leq\frac{1}{k}+\frac{1}{k}\sum_{i=1}^{\ell-1}\frac{1}{i}\leq\frac{C(\log(\ell+1))}{k},
\end{align*}
and
\begin{align*}
\sum_{i=0}^{\ell-1}\frac{1}{(M+j+ik)^{3/2}}&\leq\frac{1}{k^{3/2}}+\frac{1}{k^{3/2}}\sum_{i=1}^{\ell-1}\frac{1}{i^{3/2}} \leq \frac{C}{k^{3/2}}.
\end{align*}
Inserting these bounds into \eqref{eq:partial} we have
\begin{align*}
&\sup_{x\in \R^n}(u^+_N(x,1-(M+j+\ell k)\eps^2) - u(x,1-(M+j+\ell k)\eps^2))\\
&\hspace{2in}\leq C\Lip(g)M\eps + C\Lip(g)\left(d\eps\sqrt{\frac{\ell}{k\Cr}}+\frac{k^2\eps\log(\ell+1)}{\Cr} + \sqrt{k}\eps\right).
\end{align*}
Now, every $t\in [0,1-2M\eps^2]$ satisfies $\lceil t\rceil = 1 - (M+j+\ell k)\eps^2$ for some $0 \leq j \leq k-1$ and $\ell\geq 1$. Hence, we can use $\ell k\eps^2 \leq 1$ above to obtain
\[\sup_{x\in \R^n}(u^+_N(x,t) - u(x,t))\leq C\Lip(g)M\eps + C\Lip(g)\left(\frac{d}{k\sqrt{\Cr}}+\frac{k^2\eps}{\Cr}\log\left(1+ \frac{1}{k\eps^2}\right)  + \sqrt{k}\eps \right)\]
for all $t \in [0,1-2M\eps^2]$. The estimate above also holds for $t \in [1-2M\eps^2,1]$, due to \eqref{eq:ugbase2}.  Optimizing over $k$ we have $k=\lceil d^{1/3}\Cr^{1/6}\eps^{-1/3}\rceil$ which yields
\begin{align*}
&\sup_{x\in \R^n}(u^+_N(x,t) - u(x,t))\leq C\Lip(g)M + C\Lip(g)\left( 1 + \log\left(1+d^{-1/3}\Cr^{-1/6}\eps^{-5/3}\right)\right) \Cr^{-2/3}d^{2/3}\eps^{1/3}.
\end{align*}
To ensure that \eqref{eq:kcondequiv} holds and $M\geq k$, we choose 
\[M = \frac{k^2\Lip(g)^2}{C^2\cg^2\cq^2\Cr} = \frac{d^{2/3}\Cr^{-2/3}\eps^{-2/3}\Lip(g)^2}{C^2\cg^2\cq^2},\]
where $C$ is given in \eqref{eq:kcondequiv}, and we require that 
\begin{equation}\label{eq:klower}
1 \leq \frac{M}{k} = \frac{k\Lip(g)^2}{C^2\cg^2\cq^2\Cr} \iff k \geq \frac{C^2\cg^2\cq^2\Cr}{\Lip(g)^2}.
\end{equation}
This yields
\begin{align*}
&\sup_{x\in \R^n}(u^+_N(x,t) - u(x,t))\leq C\Lip(g)\left( 1 + \frac{\Lip(g)^2}{\cg^2\cq^2}+\log\left(1+d^{-1/3}\Cr^{-1/6}\eps^{-5/3}\right) \right) \Cr^{-2/3}d^{2/3}\eps^{1/3}.
\end{align*}
Since $\cq \leq 2n$, $\Cr\leq 1$ and $\cg\leq \frac{1}{n}$, the condition $k \geq \frac{4C^2}{\Lip(g)^2}$ implies \eqref{eq:klower}. In fact, by \eqref{eq:translation} we have $\Lip(g)\geq 1/\sqrt{n}$ and so $k \geq 4nC^2$ implies \eqref{eq:klower}. Since $k=\lceil d^{1/3}\Cr^{1/6}\eps^{-1/3}\rceil$ this amounts to $\eps \leq c\,d\Cr^{1/2} $ for $c>0$. Similarly, the condition $k \geq d+1$ amounts to $\eps \leq d\Cr^{1/2}/(d+1)^3$, and so we require 
\[\eps \leq d\Cr^{1/2}\min\left\{c,\frac{1}{(d+1)^3}  \right\}.\]
Since we can take $c<1$, the condition above is implied by the restriction  $\eps \leq \frac{c\,d\Cr^{1/2}}{(d+1)^3}$, This is equivalent to \eqref{eq:Ncond2} since $\eps=N^{-1/2}$.

A similar argument yields
\begin{align*}
&\inf_{x\in \R^n}(u^-_N(x,t) - u(x,t))\geq -C\Lip(g)\left( 1 + \frac{\Lip(g)^2}{\cg^2\cq^2}+\log\left(1+d^{-1/3}\Cr^{-1/6}\eps^{-5/3}\right) \right) \Cr^{-2/3}d^{2/3}\eps^{1/3}
\end{align*}
for all $t\in[0,1]$, under the same condition on $\eps$. This completes the proof.
\end{proof}

\section{Conclusion}
\label{sec:con}

This paper addresses the history-dependent prediction problem in the general case of any number of experts $n \geq 2$ and any $d \geq 1$ days of history. We prove that the rescaled value function \eqref{eq:uN} converges to the unique solution of a degenerate elliptic PDE \eqref{eq:PDE}, with convergence rates of $O(N^{-1/6})$, up to logarithmic factors. Using this result, we derived strategies for the investor that are provably asymptotically optimal. Future work will look at numerical methods for solving the PDE \eqref{eq:PDE} in order to use these results in practice, and whether we can improve the convergence rates to $O(N^{-1/2})$ to match the results from prior work \cite{drenska2019PDE} for $n=2$ and $d\leq 4$.


\begin{thebibliography}{10}

\bibitem{AKT}
K.~Amin, S.~Kale, G.~Tesauro, and D.~Turaga.
\newblock Budgeted prediction with expert advice.
\newblock In {\em Twenty-Ninth AAAI Conference on Artificial Intelligence},
  2015.

\bibitem{AP}
A.~Andoni and R.~Panigrahy.
\newblock A differential equations approach to optimizing regret trade-offs.
\newblock {\em CoRR}, abs/1305.1359, 2013.

\bibitem{APSS}
T.~Antunovic, Y.~Peres, S.~Sheffield, and S.~Somersille.
\newblock Tug-of-war and infinity {L}aplace equation with vanishing {N}eumann
  boundary condition.
\newblock {\em Communications in Partial Differential Equations},
  37(10):1839--1869, 2012.

\bibitem{AS1}
S.~N. Armstrong and C.~K. Smart.
\newblock A finite difference approach to the infinity {L}aplace equation and
  tug-of-war games.
\newblock {\em Trans. Amer. Math. Soc.}, 364(2):595--636, 2012.

\bibitem{Bayraktar}
E.~Bayraktar, I.~Ekren, and Y.~Zhang.
\newblock On the asymptotic optimality of the comb strategy for prediction with
  expert advice.
\newblock {\em To appear in Annals of Applied Probability}, 2020.

\bibitem{calder2018lecture}
J.~Calder.
\newblock Lecture notes on viscosity solutions.
\newblock 2018.
\newblock Online Lecture Notes:
  \url{http://www-users.math.umn.edu/~jwcalder/viscosity_solutions.pdf}.

\bibitem{calder2020asymp}
J.~Calder and N.~Drenska.
\newblock {Asymptotically optimal strategies for online prediction with
  history-dependent experts}.
\newblock {\em To appear in Journal of Fourier Analysis and Applications
  Special Issue on Harmonic Analysis on Graphs}, 2020.

\bibitem{calder2020convex}
J.~Calder and C.~K. Smart.
\newblock {The limit shape of convex hull peeling}.
\newblock {\em Duke Mathematical Journal}, 169(11):2079--2124, 2020.

\bibitem{CFH}
N.~Cesa-Bianchi, Y.~Freund, D.~Haussler, D.~P. Helmbold, R.~E. Schapire, and
  M.~K. Warmuth.
\newblock How to use expert advice.
\newblock {\em J. ACM}, 44(3):427--485, May 1997.

\bibitem{CBL}
N.~Cesa-Bianchi and G.~Lugosi.
\newblock {\em Prediction, Learning, and Games}.
\newblock Cambridge University Press, New York, NY, USA, 2006.

\bibitem{cover1966behavior}
T.~M. Cover.
\newblock Behavior of sequential predictors of binary sequences.
\newblock Technical report, Stanford University California Stanford Electronics
  Labs, 1966.

\bibitem{crandall1992user}
M.~G. Crandall, H.~Ishii, and P.-L. Lions.
\newblock User’s guide to viscosity solutions of second order partial
  differential equations.
\newblock {\em Bulletin of the American mathematical society}, 27(1):1--67,
  1992.

\bibitem{drenska2017pde}
N.~Drenska.
\newblock {\em A {PDE} Approach to a Prediction Problem Involving Randomized
  Strategies}.
\newblock PhD thesis, New York University, 2017.

\bibitem{drenska2019PDE}
N.~Drenska and R.~V. Kohn.
\newblock A {PDE} approach to the prediction of a binary sequence with advice
  from two history-dependent experts.
\newblock {\em arXiv:2007.12732}, 2020.

\bibitem{drenska2020prediction}
N.~Drenska and R.~V. Kohn.
\newblock Prediction with expert advice: {A} {PDE} perspective.
\newblock {\em Journal of Nonlinear Science}, 30(1):137--173, 2020.

\bibitem{Evans}
L.~Evans.
\newblock {\em Partial {D}ifferential {E}quations}.
\newblock Graduate studies in mathematics. American Mathematical Society, 2010.

\bibitem{FS}
Y.~Freund and R.~E. Schapire.
\newblock A decision-theoretic generalization of on-line learning and an
  application to boosting.
\newblock {\em Journal of computer and system sciences}, 55(1):119--139, 1997.

\bibitem{giga1991comparison}
Y.~Giga, S.~Goto, H.~Ishii, and M.-H. Sato.
\newblock Comparison principle and convexity preserving properties for singular
  degenerate parabolic equations on unbounded domains.
\newblock {\em Indiana University Mathematics Journal}, pages 443--470, 1991.

\bibitem{GPS}
N.~Gravin, Y.~Peres, and B.~Sivan.
\newblock Towards optimal algorithms for prediction with expert advice.
\newblock In {\em Proceedings of the Twenty-Seventh Annual ACM-SIAM Symposium
  on Discrete Algorithms}, SODA '16, pages 528--547, Philadelphia, PA, USA,
  2016. Society for Industrial and Applied Mathematics.

\bibitem{Hannan}
J.~Hannan.
\newblock Approximation to bayes risk in repeated play.
\newblock {\em Contributions to the Theory of Games}, 3:97--139, 1957.

\bibitem{HKW}
D.~Haussler, J.~Kivinen, and M.~K. Warmuth.
\newblock Tight worst-case loss bounds for predicting with expert advice.
\newblock In {\em European Conference on Computational Learning Theory}, pages
  69--83. Springer, 1995.

\bibitem{KS1}
R.~V. Kohn and S.~Serfaty.
\newblock A deterministic-control-based approach motion by curvature.
\newblock {\em Communications on Pure and Applied Mathematics}, 59(3):344--407,
  2006.

\bibitem{KS2}
R.~V. Kohn and S.~Serfaty.
\newblock A deterministic-control-based approach to fully nonlinear parabolic
  and elliptic equations.
\newblock {\em Communications on Pure and Applied Mathematics},
  63(10):1298--1350, 2010.

\bibitem{LM}
M.~Lewicka and J.~J. Manfredi.
\newblock The obstacle problem for the p-laplacian via optimal stopping of
  tug-of-war games.
\newblock {\em Probability Theory and Related Fields}, pages 1--30, 2015.

\bibitem{LW}
N.~Littlestone and M.~K. Warmuth.
\newblock The weighted majority algorithm.
\newblock {\em Inf. Comput.}, 108(2):212--261, Feb. 1994.

\bibitem{KP}
R.~P. Michael~Kapralov.
\newblock Prediction strategies without loss.
\newblock Neural Information Processing Systems Foundation, January 2012.

\bibitem{NS}
A.~Naor and S.~Sheffield.
\newblock Absolutely minimal lipschitz extension of tree-valued mappings.
\newblock {\em Mathematische Annalen}, 354(3):1049--1078, 2012.

\bibitem{PS2}
Y.~Peres, O.~Schramm, S.~Sheffield, and D.~B. Wilson.
\newblock Tug-of-war and the infinity {L}aplacian.
\newblock {\em J. Amer. Math. Soc.}, 22(1):167--210, 2009.

\bibitem{PS1}
Y.~Peres and S.~Sheffield.
\newblock Tug-of-war with noise: A game-theoretic view of the $p$-laplacian.
\newblock {\em Duke Math. J.}, 145(1):91--120, 10 2008.

\bibitem{Ro}
D.~Rokhlin.
\newblock {PDE} approach to the problem of online prediction with expert
  advice: {A} construction of potential-based strategies.
\newblock {\em International Journal of Pure and Applied Mathematics}, 114, 05
  2017.

\bibitem{ABG}
Y.~A. Yadkori, P.~L. Bartlett, and V.~Gabillon.
\newblock Near minimax optimal players for the finite-time 3-expert prediction
  problem.
\newblock In {\em Advances in Neural Information Processing Systems}, pages
  3033--3042, 2017.

\bibitem{Zhu}
K.~Zhu.
\newblock Two problems in applications of {PDE}.
\newblock {\em http://pqdtopen.proquest.com/pubnum/3635320.html}, 2014.

\end{thebibliography}
\end{document}